\title{PRIMAL AND DUAL ACTIVE-SET METHODS FOR CONVEX QUADRATIC PROGRAMMING}
\author{Anders Forsgren%
        \thanks{Optimization and Systems Theory, Department of
            Mathematics, KTH Royal Institute of Technology, SE-100 44
            Stockholm, Sweden (\texttt{andersf@kth.se}).  Research
            supported by the Swedish Research Council (VR).}
 \and   Philip E. Gill%
        \thanks{Department of Mathematics, University of
        California, San Diego, La Jolla, CA 92093-0112 (\texttt{pgill@ucsd.edu},
        \texttt{elwong@ucsd.edu}).
        Research supported in part by Northrop Grumman Aerospace Systems, and National Science Foundation grants
        DMS-1318480 and DMS-1361421.}
 \and   Elizabeth Wong\addtocounter{footnote}{-1}\footnotemark}
\date{The final publication is available at Springer via \\ http://dx.doi.org/10.1007/s10107-015-0966-2}
\newcommand{\Newcommand}[2]%
   {\ifx#1\undefined \newcommand{#1}{#2} \else \renewcommand{#1}{#2} \fi}
  \newcommand{\mod}[1]{|#1|}
  \renewcommand{\mod}[1]{|#1|}
\Newcommand{\Re} {\mathbb{R}}
\Newcommand{\subject}{\mathop{\operator@font{subject\ to}}}
\newcommand{\ph}[1]{\phantom#1}
\newcommand{\m}{\ph-}
\newcommand{\s}[1]{^{\scriptscriptstyle\mathit{#1}}}
\newcommand{\pmat}[1]{\begin{pmatrix}#1\end{pmatrix}}
\newcommand{\thnmtx}[2]{\renewcommand{\arraystretch}{0.95}%
      \left(\nnthiksp\begin{array}{#1}#2\end{array}\nnthiksp\right)}
\newcommand{\st}{:}
\newcommand{\argmin}{\mathop{\operator@font{argmin}}}
\newcommand{\T}{^T\!}
\newcommand{\infnorm}[1]{\norm{#1}_{\infty}\drop}
\newcommand{\disp}{\displaystyle}
\newcommand{\sgap}{\;}
\newcommand{\mgap}{\;\;}
\newcommand{\bgap}{\;\;\;}
\newcommand{\minimize}[1]{{\displaystyle\minim_{#1}}}
\newcommand{\maximize}[1]{{\displaystyle\maxim_{#1}}}
\newcommand{\Bigp}[1]{\bigl(#1\bigr)}
\newcommand{\tmat}[2]{\big(\, #1 \ \ #2 \,\big)}
\newcommand{\tmatt}[3]{\big(\, #1 \ \  #2 \ \  #3\,\big)}
\newcommand{\tmattt}[4]{\big(\, #1 \ \  #2 \ \  #3 \ \  #4\,\big)}
\newcommand{\mat}[2]{\Bigp{\, #1 \mgap #2 \,}}
\newcommand{\mattt}[4]{\Bigp{\; #1 \mgap #2 \mgap #3 \mgap #4\;}}
\newcommand{\maxim}{\mathop{\operator@font{maximize}}}
\newcommand{\minim}{\mathop{\operator@font{minimize}}}
\newcommand{\drop}{^{\null}}
\newcommand{\etal}{et al.}
\newcommand{\Grad}{\nabla\!}
\newcommand{\Hess}{\nabla^2\!}
\newcommand{\half}  {{\textstyle\frac12}}
\newcommand{\inv}{^{-1}}
\newcommand{\norm}[1]{\|#1\|}
\newcommand{\Seq}[1]{\{\hthinsp#1\hthinsp\}}
\newcommand{\words}[1]{\mgap\text{#1}\mgap}
\newcommand{\wordss}[1]{\quad\text{#1}\quad}
\newcommand{\hthinsp}{\mskip  1   mu}    %
\newcommand{\nnthiksp}{\mskip -9.5 mu}
\newcommand{\up}[1]{^{(#1)}}
\newcommand{\submax}{_{\operator@font{max}}}
\newcommand{\A}{_{\scriptscriptstyle A}}
\newcommand{\B}{_{\scriptscriptstyle B}}
\Newcommand{\D}{_{\scriptscriptstyle D}}
\Newcommand{\P}{_{\scriptscriptstyle P}}
\newcommand{\starsymbol}{\ast}
\newcommand{\substar}{_\starsymbol}
\newcommand{\alphastar}{\alpha\substar\drop}
\newcommand{\alphamax}{\alpha\submax}
\providecommand{\varDelta}  {{\mathit\Delta}}
\providecommand{\varPi}     {{\mathit\Pi}}
\newcommand{\Deltait}{\varDelta}
\newcommand{\Piit}{\varPi}
\newcommand{\setB}{\mathcal{B}}
\newcommand{\setI}{\mathcal{I}}
\newcommand{\setN}{\mathcal{N}}
\newcommand{\subU}{_{\scriptscriptstyle U}}
\newcommand{\Ahat}{\skew6\widehat A}
\newcommand{\ghat}{\skew3\widehat g}
\newcommand{\Hhat}{\skew3\widehat H}
\newcommand{\rtilde}{\skew3\widetilde r}
\newcommand{\xbar}{\skew{2.8}\bar x}
\newcommand{\xtilde}{\skew3\widetilde x}
\newcommand{\ytilde}{\skew3\widetilde y}
\newcommand{\zbar}{\skew{2.8}\bar z}
\newcommand{\ztilde}{\skew{2.8}\widetilde z}
\Newcommand{\L}{_{\scriptscriptstyle L}} 
\let\subL\L                              
\newcommand{\N}{_{\scriptscriptstyle N}}
\Newcommand{\R}{_{\scriptscriptstyle R}}
\newcommand{\U}{\subU}
\newcommand{\Bd}{_{\scriptscriptstyle\mathit{B}}\drop}
\newcommand{\Nd}{_{\scriptscriptstyle\mathit{N}}\drop}
\newcommand{\BB}{_{\scriptscriptstyle\mathit{BB}}}
\newcommand{\BBd}{_{\scriptscriptstyle\mathit{BB}}\drop}
\newcommand{\BN}{_{\scriptscriptstyle\mathit{BN}}}
\newcommand{\BNd}{_{\scriptscriptstyle\mathit{BN}}\drop}
\newcommand{\NN}{_{\scriptscriptstyle\mathit{NN}}}
\newcommand{\NNd}{_{\scriptscriptstyle\mathit{NN}}\drop}
\newcommand{\Bl}{_{{\scriptscriptstyle\mathit{Bl}}}}
\newcommand{\Bld}{_{{\scriptscriptstyle\mathit{Bl}}}\drop}
\newcommand{\Bk}{_{{\scriptscriptstyle\mathit{Bk}}}}
\newcommand{\Nk}{_{{\scriptscriptstyle\mathit{Nk}}}}
\newcommand{\Bkd}{_{{\scriptscriptstyle\mathit{Bk}}}\drop}
\newcommand{\Nl}{_{{\scriptscriptstyle\mathit{Nl}}}}
\newcommand{\Nld}{_{{\scriptscriptstyle\mathit{Nl}}}\drop}
\newcommand{\ld}{_l\drop}
\newcommand{\lld}{_{ll}\drop}
\renewcommand{\ll}{_{ll}}
\newcommand{\kkd}{_{kk}\drop}
\newcommand{\kld}{_{kl}\drop}
\newcommand{\optval}{\mathop{\operator@font optval}}
\newcommand{\Primal}{\mbox{\small PQP}}
\newcommand{\Dual}{\mbox{\small DQP}}
\newcommand{\dx}{\Deltait x}
\newcommand{\dxtilde}{\Deltait\xtilde}
\newcommand{\dy}{\Deltait y}
\newcommand{\dytilde}{\Deltait\ytilde}
\newcommand{\dz}{\Deltait z}
\newcommand{\dztilde}{\Deltait\ztilde}
\newcommand{\setBviolated}{\setB^{\scriptscriptstyle<}}
\newcommand{\rBviolated}{\rtilde}
\newcommand{\EXPAND}{{\small EXPAND}}
\newcommand{\KKT} {KKT}
\newcommand{\QP}  {QP}
\providecommand{\CUTEst}{{\small CUTE}st}
\providecommand{\Matlab}{{\sc Matlab}}
\newcommand{\SQOPT}{{\texttt{SQOPT}}}
\newcommand{\QPOPT}{{\texttt{QPOPT}}}
\newcommand{\QPSchur}{\texttt{QPSchur}}
\newcommand{\sqic}{{\texttt{SQIC}}}
\newcommand{\GHAD}{\texttt{GALAHAD}}
\newcommand{\QPA}{\texttt{QPA}}
\newcommand{\PDQP}{\texttt{PDQP}}
\newcommand{\qpOASES}{{\texttt{qpOASES}}}
\newlength{\argwidth}
\newcommand{\mbl}[2]{
\settowidth{\argwidth}{$#1$}
\makebox[\argwidth][l]{$#2$}}%
\newcommand{\mbc}[2]{
\settowidth{\argwidth}{$#1$}
\makebox[\argwidth][c]{$#2$}}%
\newcommand{\algname}[1]{\mbox{\lowercase{\texttt{#1}}}}
\renewcommand{\algname}[1]{\textrm{#1}}
\algrenewcommand{\algorithmiccomment}[1]{\hfill[#1]}
\newcommand{\agap}{\hspace{1.5ex}}
\newcommand{\STOP}{\mathbf{stop}}
\newcommand{\unb}{\hbox to 0pt{$\null^{u}$}}
\newcommand{\fail}{\hbox to 0pt{$\null^{f}$}}
\newcommand{\infs}{\hbox to 0pt{$\null^{i}$}}
\newcommand{\hstrt}{\rule[-1ex]{0pt}{3.5ex}}
\newcommand{\opt}{_{\mathtt{opt}}}
\newcommand{\epsfeas}{\epsilon_{\mathtt{fea}}}
\newcommand{\Cute}[1]{\hbox{\it\lowercase{#1}\/}}
\renewcommand{\Cute}[1]{\texttt{#1}\/}
\begin{document}

\maketitle

\begin{abstract}
\noindent%
Computational methods are proposed for solving a convex quad\-ratic program
(QP).  Active-set methods are defined for a particular primal and dual
formulation of a QP with general equality constraints and simple lower
bounds on the variables.  In the first part of the paper, two methods are
proposed, one primal and one dual. These methods generate a sequence of
iterates that are feasible with respect to the equality constraints
associated with the optimality conditions of the primal-dual form.  The
primal method maintains feasibility of the primal inequalities while
driving the infeasibilities of the dual inequalities to zero. The dual
method maintains feasibility of the dual inequalities while moving to
satisfy the primal inequalities.
In each of these
methods, the search directions satisfy a KKT system of equations formed
from Hessian and constraint components associated with an appropriate
column basis.  The composition of the basis is specified by an active-set
strategy that guarantees the nonsingularity of each set of KKT equations.
Each of the proposed methods is a conventional active-set method in the
sense that an initial primal- or dual-feasible point is required. In the
second part of the paper, it is shown how the quadratic program may be
solved as a coupled pair of primal and dual quadratic programs created from
the original by simultaneously shifting the simple-bound constraints and
adding a penalty term to the objective function.  Any conventional column
basis may be made optimal for such a primal-dual pair of shifted-penalized
problems. The shifts are then updated using the solution of either the
primal or the dual shifted problem.  An obvious application of this
approach is to solve a shifted dual QP to define an initial feasible point
for the primal (or \emph{vice versa}). The computational performance of
each of the proposed methods is evaluated on a set of convex problems from
the \CUTEst{} test collection.

\keywords{\ quadratic programming, active-set methods, convex quadratic
  programming, primal active-set methods, dual active-set methods}
\end{abstract}

\section{Introduction}
We consider the formulation and analysis of  active-set methods
for a convex quadratic program (\QP\@) of the form
\begin{equation}  \label{eqn-QP-defined}
 \begin{array}{l@{\hspace{10pt}}l}
  \minimize{x\in\Re^n,\,y\in\Re^m} & \half x\T Hx + \half y\T  M y +
  c\T x \\[5pt]
  \subject                         & A x   +  M y = b,  \quad   x\ge 0,
 \end{array}
\end{equation}
where $A$, $b$, $c$, $H$ and $M$ are constant, with $H$ and $M$ symmetric
positive semidefinite. In order to simplify the theoretical discussion, the
inequalities of (\ref{eqn-QP-defined}) involve nonnegativity constraints
only.  However, the methods to be described are easily extended to treat
all forms of linear constraints.  (Numerical results are given for problems
with constraints in the form $x\subL \le x \le x\U$ and $b\subL \le Ax \le b\U$,
for fixed vectors $x\subL$, $x\U$, $b\subL$ and $b\U$.)  If $M =0$, the \QP{}
(\ref{eqn-QP-defined}) is a conventional convex quadratic program with
constraints defined in standard form.  A regularized quadratic program may
be obtained by defining $M=\mu I$ for some small positive parameter $\mu$.
(For applications that require the solution of a regularized \QP{} see,
e.g., \cite{AltG99,SWri98,GilR13}.)

Active-set methods for quadratic programming problems of the form
(\ref{eqn-QP-defined}) solve a sequence of linear equations that involve
the $y$-variables and a subset of the $x$-variables.  Each set of equations
constitutes the optimality conditions associated with an
equality-constrained quadratic subproblem.  The goal is to predict the
optimal active set, i.e., the set of constraints that are satisfied with
equality, at the solution of the problem.  A conventional active-set method
has two phases.  In the first phase, a feasible point is found while
ignoring the objective function; in the second phase, the objective is
minimized while feasibility is maintained. A useful feature of active-set
methods is that they are well-suited for ``warm starts'', where a good
estimate of the optimal active set is used to start the algorithm.  This is
particularly useful in applications where a sequence of quadratic programs
is solved, e.g., in a sequential quadratic programming method or in an ODE-
or PDE-constrained problem with mesh refinement.  Other applications of
active-set methods for quadratic programming include mixed-integer
nonlinear programming, portfolio analysis, structural analysis, and optimal
control.

In Section~\ref{sec-background}, the primal and dual forms of a convex
quadratic program with constraints in standard form are generalized to
include general lower bounds on both the primal and dual variables.  These
problems constitute a primal-dual pair that includes problem
(\ref{eqn-QP-defined}) and its associated dual as a special case. In
Sections~\ref{sec-primal} and \ref{sec-dual}, an active-set method is
proposed for each of the primal and dual forms associated with the
generalized problem of Section~\ref{sec-background}. Both of these methods
provide a sequence of iterates that are feasible with respect to the
equality constraints associated with the optimality conditions of the
primal-dual problem pair.  The primal method maintains feasibility of the
primal inequalities while driving the infeasibilities of the dual
inequalities to zero.  By contrast, the dual method maintains feasibility of
the dual inequalities while moving to satisfy the
primal inequalities.  In each of these methods, the search
directions satisfy a \KKT{} system of equations formed from Hessian and
constraint components associated with an appropriate column basis.  The
composition of the basis is specified by an active-set strategy that
guarantees the nonsingularity of each set of \KKT{} equations.

The methods formulated in Sections~\ref{sec-primal}--\ref{sec-dual}
define conventional active-set methods in the sense that an initial
feasible point is required. In Section~\ref{sec-primal-dual}, a method is
proposed that solves a pair of coupled quadratic programs created from the
original by simultaneously shifting the simple-bound constraints and adding
a penalty term to the objective function.  Any conventional column basis
can be made optimal for such a primal-dual pair of shifted-penalized
problems. The shifts are then updated using the solution of either the
primal or the dual shifted problem. An obvious application of this idea is
to solve a shifted dual \QP{} to define an initial feasible point for the
primal, or \emph{vice-versa}. In addition to the obvious benefit of using
the objective function while getting feasible, this approach provides an
effective method for finding a dual-feasible point when $H$ is positive
semidefinite and $M = 0$.  Finding a dual-feasible point is relatively
straightforward for the strictly convex case, i.e., when $H$ is positive
definite. However, in the general case, the dual constraints for the
phase-one linear program involve entries from $H$ as well as $A$, which
complicates the formulation of the phase-one method considerably.

Finally, in Section~\ref{sec-numerical-results} some numerical experiments
are presented for a simple \Matlab{} implementation of a coupled
primal-dual method applied to a set of convex problems from the \CUTEst{}
test collection \cite{GouOT15,GouOT03}.

There are a number of alternative active-set methods available for solving
a \QP{} with constraints written in the format of problem
(\ref{eqn-QP-defined}).  Broadly speaking, these methods fall into three
classes defined here in the order of increasing generality:
(i) methods for strictly convex quadratic
programming ($H$ symmetric positive definite) \cite{GolI83, GilGMSW84,
  Pow85,Sto86, BarB06}; (ii) methods for convex quadratic programming ($H$
symmetric positive semidefinite) \cite{GilMS06a, Bol97, Huy08, Mae10,
  Won11}; and (iii) methods for general quadratic programming (no
assumptions on $H$ other than symmetry) \cite{Bea67, Fle71, GilM78a,
  BeaB78, BunK80, Hoy86, GilMSW90, GilMSW91, Gou91, Fle00, GouT02, GouT02b,
  Won11, GilW15}. Of the methods specifically designed for convex quadratic
programming, only the methods of Boland~\cite{Bol97} and Wong
\cite[Chapter~4]{Won11} are dual active-set methods.  Some existing
active-set quadratic programming solvers include \QPOPT{} \cite{GilMS95},
\QPSchur{} \cite{BarB06}, \SQOPT{} \cite{GilMS06a}, \sqic{} \cite{GilW15}
and \QPA{} (part of the \GHAD{} software library) \cite{GouOT03b}.

The primal active-set method proposed in Section~\ref{sec-primal} is
motivated by the methods of Fletcher~\cite{Fle71}, Gould~\cite{Gou91}, and
Gill and Wong~\cite{GilW15}, which may be viewed as methods that extend the
properties of the simplex method to general quadratic programming.  At each
iteration, a direction is computed that satisfies a \emph{nonsingular}
system of linear equations based on an estimate of the active set at a
solution. The equations may be written in symmetric form and involve both
the primal and dual variables.  In this context, the purpose of the
active-set strategy is not only to obtain a good estimate of the optimal
active set, but also to ensure that the systems of linear equations that
must be solved at each iteration are nonsingular.  This strategy allows the
application of any convenient linear solver for the computation of the
iterates.  In this paper, these ideas are applied to convex quadratic
programming.  The resulting sequence of iterates is the same as that
generated by an algorithm for general \QP\@, but the structure of the
iteration is different, as is the structure of the linear equations that
must be solved.  Similar ideas are used to formulate the new dual
active-set method proposed in Section~\ref{sec-dual}.

The proposed primal, dual, and combined primal-dual methods use a
``conventional'' active-set approach in the sense that the constraints
remain unchanged during the solution of a given \QP\@.  Alternative
approaches that use a parametric active-set method have been proposed by
Best~\cite{Bes82,Bes96}, Ritter \cite{Rit67,Rit81}, Ferreau, Bock and
Diehl~\cite{FerBD08}, Potschka \etal~\cite{PotKBS10}, and implemented in the
\qpOASES{} package by Ferreau \etal~\cite{FerKPBD14}.  Primal methods based
on the augmented Lagrangian method have been proposed by Delbos and
Gilbert~\cite{DelG05}, Chiche and Gilbert~\cite{ChiG15}, and Gilbert and
Joannopoulos~\cite{GilL15}.  The use of shifts for the bounds have been
suggested by Cartis and Gould~\cite{CarG06} in the context of interior
methods for linear programming. Another class of active-set methods that
are convergent for strictly convex quadratic programs have been
considered by Curtis, Han, and Robinson~\cite{CurHR14}.

\paragraph{Notation and terminology.}
Given vectors $a$ and $b$ with the same dimension, $\min(a,b)$ is a vector
with components $\min(a_i,b_i)$.  The vectors $e$ and $e_j$ denote,
respectively, the column vector of ones and the $j$th column of the
identity matrix $I$. The dimensions of $e$, $e_i$ and $I$ are defined by
the context.  Given vectors $x$ and $y$, the column vector consisting of
the components of $x$ augmented by the components of $y$ is denoted by
$(x,y)$.

\section{Background}  \label{sec-background}
Although the purpose of this paper is the solution of quadratic programs of
the form (\ref{eqn-QP-defined}), for reasons that will become evident in
Section~\ref{sec-primal-dual}, the analysis will focus on the properties of a pair
of problems that may be interpreted as a primal-dual pair of
\QP\@s  associated with problem (\ref{eqn-QP-defined}).
It is assumed throughout that the matrix $\tmat{A}{ M}$ associated with the
equality constraints of problem (\ref{eqn-QP-defined}) has full row
rank. This assumption can be made without loss of generality, as shown in
Proposition~\ref{prop-AMfullrowrank} of the Appendix.  The paper involves a
number of other basic theoretical results that are subsidiary to the main
presentation.  The proofs of these results are relegated to the Appendix.

\subsection{Formulation of the primal and dual problems}
For given constant vectors $q$ and $r$, consider the pair of convex
quadratic programs
\begin{displaymath}
(\Primal_{q,r})\quad
 \begin{array}{ll}
   \minimize{x,y}  & \m\half x\T H x  + \half y\T  M y + c\T x + r\T x\\
   \subject        & \mbl{\null- Hx + A\T y + z = c,}{\null\m Ax +  M y = b,}      \mgap\bgap x \ge -q,
 \end{array}
\]
and
\[
(\Dual_{q,r}) \quad
 \begin{array}{ll}
   \maximize{x,y,z}& -\half x\T H x - \half y\T  M y + b\T y - q\T z\\
   \subject        & \mbl{\null- Hx + A\T y + z = c,}{\null- Hx + A\T y + z = c,} \mgap\bgap z \ge -r.
  \end{array}
\end{displaymath}
The following result gives joint optimality conditions for the triple $(x, y,z)$
such that $(x$, $y)$ is optimal for $(\Primal_{q,r})$, and $(x, y, z)$ is
optimal for $(\Dual_{q,r})$.  If $q$ and $r$ are zero, then $(\Primal_{0,0})$
and $(\Dual_{0,0})$ are the primal and dual problems associated with
(\ref{eqn-QP-defined}).  For arbitrary $q$ and $r$, $(\Primal_{q,r})$ and
$(\Dual_{q,r})$ are essentially the dual of each other, the difference is
only an additive constant in the value of the objective function.

\begin{proposition}\label{prop-regQPopt}
  Let $q$ and $r$ denote constant vectors in $\Re^n$.  If $(x$, $y$,
  $z)$ is a given triple in $\Re^n \times \Re^m \times \Re^n$, then
  $(x$, $y)$ is optimal for $(\Primal_{q,r})$ and $(x$, $y$, $z)$ is
  optimal for $(\Dual_{q,r})$ if and only if
\begin{subequations}\label{eqn-optcond}
\begin{align}
H x +  c - A\T y - z & =   0, \label{eqn-optgradLzero} \\
A x +  M y - b       & =   0, \label{eqn-optfeasprimallin} \\
               x + q &\ge  0, \label{eqn-optfeasprimalbound} \\
               z + r &\ge  0, \label{eqn-optnonnegmult} \\
        (x+q)^T(z+r) & =   0. \label{eqn-optcomp}
\end{align}
\end{subequations}
In addition, the optimal objective values satisfy
$\optval(\Primal_{q,r})-\optval(\Dual_{q,r})=-q\T r$. Finally,
{\rm(\ref{eqn-optcond})} has a solution if and only if the sets
\[
 \big\{ (x,y,z) \st - H x + A\T y + z = c, \sgap z \ge -r \big\} \words{and}
 \big\{ (x,y)   \st   A x +  M   y    = b, \sgap x \ge -q \big\}
\]
are both nonempty.
\end{proposition}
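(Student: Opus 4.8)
The plan is to base the whole argument on a single \emph{weak-duality identity}. Let $f_P$ and $f_D$ denote the objective functions of $(\Primal_{q,r})$ and $(\Dual_{q,r})$. I would first establish that for every $(x',y')$ feasible for $(\Primal_{q,r})$ and every $(x'',y'',z'')$ feasible for $(\Dual_{q,r})$,
\[
 f_P(x',y')-f_D(x'',y'',z'')=\half(x'-x'')\T H(x'-x'')+\half(y'-y'')\T M(y'-y'')+(x'+q)\T(z''+r)-q\T r ,
\]
which follows by substituting $Ax'+My'=b$ and $-Hx''+A\T y''+z''=c$ into the difference of the two objectives and collecting terms (the one genuinely computational step, and a short one: the cross term $x'^{\,T}\!A\T y''$ cancels, and the $H$, $M$ terms complete squares). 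Because $H$ and $M$ are positive semidefinite and feasibility supplies $x'+q\ge0$ and $z''+r\ge0$, the first three terms are nonnegative, so $f_P(x',y')-f_D(x'',y'',z'')\ge-q\T r$, with equality precisely when $H(x'-x'')=0$, $M(y'-y'')=0$, and $(x'+q)\T(z''+r)=0$.

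For the ``if'' direction, suppose $(x,y,z)$ satisfies (\ref{eqn-optcond}). Then (\ref{eqn-optgradLzero}) and (\ref{eqn-optnonnegmult}) say that $(x,y,z)$ is dual-feasible, and (\ref{eqn-optfeasprimallin})--(\ref{eqn-optfeasprimalbound}) say that $(x,y)$ is primal-feasible. Applying the identity with $(x',y')=(x,y)$ and $(x'',y'',z'')=(x,y,z)$ and using the complementarity condition (\ref{eqn-optcomp}), the difference reduces to $f_P(x,y)-f_D(x,y,z)=-q\T r$. Together with the weak-duality bound this forces $(x,y)$ to minimize $f_P$ over the primal-feasible set and $(x,y,z)$ to maximize $f_D$ over the dual-feasible set, so both are optimal and, simultaneously, $\optval(\Primal_{q,r})-\optval(\Dual_{q,r})=-q\T r$.

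For the ``only if'' direction, suppose $(x,y)$ is optimal for $(\Primal_{q,r})$ and $(x,y,z)$ is optimal for $(\Dual_{q,r})$. Dual feasibility yields (\ref{eqn-optgradLzero}) and (\ref{eqn-optnonnegmult}), primal feasibility yields (\ref{eqn-optfeasprimallin})--(\ref{eqn-optfeasprimalbound}), and only the complementarity equation (\ref{eqn-optcomp}) needs work. Since $(\Primal_{q,r})$ is a convex quadratic program with only linear constraints, the KKT conditions hold at $(x,y)$ for some multipliers $\pi$ and $\lambda\ge0$, namely $Hx+c+r-A\T\pi-\lambda=0$, $M(y-\pi)=0$, and $\lambda\T(x+q)=0$. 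I would then observe that $(x,\pi,\lambda-r)$ is dual-feasible and apply the identity with $(x',y')=(x,y)$ and $(x'',y'',z'')=(x,\pi,\lambda-r)$: the $H$-term vanishes because the first components coincide, the $M$-term vanishes because $M(y-\pi)=0$, and $(x+q)\T\lambda=0$, so $f_D(x,\pi,\lambda-r)=f_P(x,y)+q\T r$. Hence the optimal value of $(\Dual_{q,r})$ is at least $f_P(x,y)+q\T r$; combined with the weak-duality bound it equals it, i.e.\ $f_P(x,y)-f_D(x,y,z)=-q\T r$, and a final use of the identity with $(x,y)$ and $(x,y,z)$ gives $(x+q)\T(z+r)=0$. (Starting from the KKT conditions of $(\Dual_{q,r})$ instead gives a symmetric proof.)

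For the solvability statement, the forward implication is immediate: any solution of (\ref{eqn-optcond}) is, as noted above, both primal- and dual-feasible, so the two sets are nonempty. For the converse, nonemptiness of the dual-feasible set together with the weak-duality bound shows $(\Primal_{q,r})$ is bounded below; being also feasible, it attains its minimum at some $(x^*,y^*)$ by the Frank--Wolfe theorem for quadratic programs. The primal KKT conditions at $(x^*,y^*)$ again produce multipliers $\pi,\lambda$, and since $M(y^*-\pi)=0$ gives $f_P(x^*,\pi)=f_P(x^*,y^*)$, the pair $(x^*,\pi)$ is also optimal, so one may relabel and assume $y^*=\pi$; then $(x^*,y^*,\lambda-r)$ is checked directly against (\ref{eqn-optcond}). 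The main obstacle throughout is not analytic but bookkeeping: one must arrange that the \emph{same} $x$ and $y$ occur in the primal pair and the dual triple, which is exactly what the ``replace $y^*$ by the multiplier'' observation and the agreement of first components in the KKT-built dual point take care of; everything else is the one identity and the weak-duality bound it yields.
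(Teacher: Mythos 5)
Your proof is correct, but it takes a genuinely different route from the paper's. The paper works directly with the Lagrangian of $(\Primal_{q,r})$: it derives the stationarity conditions, observes that $My=M\ytilde$ permits identifying the primal variable $y$ with the equality multiplier, reads off (\ref{eqn-optcond}) as the KKT system, rearranges the Lagrangian to exhibit $(\Dual_{q,r})$ as the Lagrangian dual shifted by the constant $-q\T r$, and then invokes strong duality for convex quadratic programming (and the attendant attainment statement) as known facts. You instead prove everything from a single explicit gap identity, $f\P(x',y')-f\D(x'',y'',z'')=\half(x'-x'')\T H(x'-x'')+\half(y'-y'')\T M(y'-y'')+(x'+q)\T(z''+r)-q\T r$, which I have checked (the cross term does cancel as you claim). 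From it you get weak duality with an exact characterization of when the bound is tight, and you replace the black-box appeal to strong duality by an explicit construction: KKT necessity for a linearly constrained convex program yields multipliers $(\pi,\lambda)$, the point $(x,\pi,\lambda-r)$ is dual feasible with zero gap, and $M(y-\pi)=0$ lets you reconcile the $y$-components. The Frank--Wolfe theorem then handles attainment in the solvability converse, which the paper merely asserts. What your approach buys is a self-contained and fully verifiable argument in which the equivalence ``complementarity $\Leftrightarrow$ zero duality gap'' is transparent; what the paper's approach buys is brevity and a conceptual explanation of where $(\Dual_{q,r})$ and the constant $-q\T r$ come from, at the cost of leaning on cited duality theory. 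Both are sound; yours is the more elementary derivation.
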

\begin{proof}
Let the vector of Lagrange multipliers for the constraints $Ax + M y - b=0$
be denoted by $\ytilde$. Without loss of generality, the Lagrange
multipliers for the bounds $x + q \ge 0$ of $(\Primal_{q,r})$ may be
written in the form $z + r$, where $r$ is the given fixed vector $r$. With
these definitions, a Lagrangian function $L(x,y,\ytilde,z)$ associated with
$(\Primal_{q,r})$ is given by
\begin{multline*}
 L(x,y,\ytilde,z) = \half x\T H x + (c + r)\T x + \half y\T  M y -
                          \ytilde\T (Ax +  M y - b) \\ - (z + r)\T (x+q).
\end{multline*}
Stationarity of the Lagrangian with respect to $x$ and $y$ implies
that
\begin{subequations}\label{eqn-gradLzero}
\begin{align}
Hx + c + r - A\T\ytilde - z - r  &=  Hx + c - A\T \ytilde - z  = 0, \label{eqn-gradLzeroI}\\
                 M y -  M\ytilde &=  0.                             \label{eqn-gradLzeroII}
\end{align}
\end{subequations}
The optimality conditions for $(\Primal_{q,r})$ are then given by: (i) the
feasibility conditions (\ref{eqn-optfeasprimallin}) and
(\ref{eqn-optfeasprimalbound}); (ii) the nonnegativity conditions
(\ref{eqn-optnonnegmult}) for the multipliers associated with the bounds $x+q\ge 0$;
(iii) the stationarity conditions (\ref{eqn-gradLzero}); and (iv) the
complementarity conditions (\ref{eqn-optcomp}). The vector $y$ appears only
in the term $M y$ of (\ref{eqn-optfeasprimallin}) and
(\ref{eqn-gradLzeroII}). In addition, (\ref{eqn-gradLzeroII}) implies that
$M y =  M \ytilde$, in which case we may choose $y=\ytilde$. This common value
of $y$ and $\ytilde$ must satisfy (\ref{eqn-gradLzeroI}), which is then
equivalent to (\ref{eqn-optgradLzero}). The optimality
conditions (\ref{eqn-optcond}) for $(\Primal_{q,r})$ follow directly.

With the substitution $\ytilde = y$, the expression for the Lagrangian may
be rearranged so that
\begin{equation}\label{eqn-Lyy}
  L(x,y,y,z)
    = -\half x\T H x - \half y\T  M y + b\T y - q\T z + (Hx + c - A\T y -z)\T x - q\T r.
\end{equation}
Taking into account (\ref{eqn-gradLzero}) for $y=\ytilde$, the dual
objective is given by (\ref{eqn-Lyy}) as $-\half x\T H x - \half y\T  M y +
b\T y - q\T z-q\T r$, and the dual constraints are $Hx + c - A\T y - z = 0$
and $z+r\ge 0$. It follows that $(\Dual_{q,r})$ is equivalent to the dual
of $(\Primal_{q,r})$, the only difference is the constant term $-q\T r$ in
the objective, which is a consequence of the shift $z+r$ in the dual
variables. Consequently, strong duality for convex quadratic programming
implies $\optval(\Primal_{q,r})-\optval(\Dual_{q,r})=-q\T r$. In addition,
the variables $x$, $y$ and $z$ satisfying (\ref{eqn-optcond}) are feasible
for $(\Primal_{q,r})$ and $(\Dual_{q,r})$ with the difference in the
objective function value being $-q\T r$. It follows that $(x,y,z)$ is
optimal for $(\Dual_{q,r})$ as well as $(\Primal_{q,r})$. Finally,
feasibility of both $(\Primal_{q,r})$ and $(\Dual_{q,r})$ is both necessary
and sufficient for the existence of optimal solutions.
\end{proof}

\subsection{Optimality conditions and the KKT equations}
The proposed methods are based on maintaining index sets $\setB$ and
$\setN$ that define a partition of the index set $\setI = \{1$, $2$, \dots,
$n\}$, i.e., $\setI = \setB \cup \setN$ with $\setB\cap \setN = \emptyset$.
Following standard terminology, we refer to the subvectors $x\B$ and $x\N$
associated with an arbitrary $x$ as the basic and nonbasic variables,
respectively. The crucial feature of $\setB$ is that it defines a unique
solution $(x,y,z)$ to the equations
\begin{equation} \label{eqn-B-N-lin}
\begin{alignedat}{2}
  H x + c - A^T y - z   &= 0, \quad & x\N + q\N &=  0, \\
  A x +       M y - b   &= 0, \quad & z\B + r\B &=  0.
\end{alignedat}
\end{equation}
For the symmetric Hessian $H$, the matrices $H\BB$ and $H\NN$ denote the
subset of rows and columns of $H$ associated with the sets $\setB$ and
$\setN$, respectively. The unsymmetric matrix of components $h_{ij}$ with
$i\in\setB$ and $j\in \setN$ will be denoted by $H\BN$.  Similarly, $A\B$
and $A\N$ denote the matrices of columns of $A$ associated with $\setB$ and
$\setN$ respectively. With this notation, the equations (\ref{eqn-B-N-lin})
may be written in partitioned form as
\[
\begin{alignedat}{2}
  H\BB   x\B  + H\BN x\N  + c\B - A\B^T y - z\Bd      &= 0, & \quad x\N + q\N &=  0,\\
  H\BN^T x\Bd + H\NN x\N  + c\N - A\N^T y       - z\Nd&= 0, &       z\B + r\B &=  0,\\
  A\B    x\B  + A\N  x\N  +  M          y       - b   &= 0.
\end{alignedat}
\]
Eliminating $x\N$ and $z\B$ from these equations using the equalities $x\N
+ q\N = 0$ and $z\B + r\B = 0$ yields the symmetric equations
\begin{equation}\label{eqn-xBy}
   \pmat{ H\BB & A\B^T \\
          A\B  & -M      }
   \pmat{ \m x\B \\
           - y }
 = \pmat{  H\BN q\Nd - c\B - r\B \\
           A\N  q\N  + b           }
\end{equation}
for $x\B$ and $y$. It follows that (\ref{eqn-B-N-lin}) has a unique
solution if and only if (\ref{eqn-xBy}) has a unique solution. Therefore,
if $\setB$ is chosen to ensure that (\ref{eqn-B-N-lin}) has a unique
solution, it must follow from (\ref{eqn-xBy}) that the matrix $K\B$ such
that
\begin{equation}  \label{eqn-submin-KKT}
 K\B = \pmat{ H\BB & A\B^T \\
              A\B  & -M      }
\end{equation}
is nonsingular.  Once $x\B$ and $y$ have been computed, the $z\N$-variables
are given by
\begin{equation}\label{eqn-zN}
 z\N = H\BN^T x\Bd - H\NNd q\Nd + c\Nd - A\N^T y.
\end{equation}
As in Gill and Wong~\cite{GilW15}, any set $\setB$ such that $K\B$ is
nonsingular is referred to as a \emph{second-order consistent basis}.
Methods that impose restrictions on the eigenvalues of $K\B$ are known
as inertia-controlling methods.  (For a description of
inertia-controlling methods for general quadratic programming, see,
e.g., Gill \etal~\cite{GilMSW91}, and Gill and Wong~\cite{GilW15}.)

The two methods proposed in this paper, one primal, one dual, generate a
sequence of iterates that satisfy the equations (\ref{eqn-B-N-lin}) for
some partition $\setB$ and $\setN$. If the conditions (\ref{eqn-B-N-lin})
are satisfied, the additional requirement for fulfilling the optimality
conditions of Proposition~\ref{prop-regQPopt} are $x\B + q\B \ge 0$ and
$z\N + r\N \ge 0$. The primal method of Section~\ref{sec-primal} imposes
the restriction that $x\B + q\B \ge 0$, which implies that the sequence of
iterates is primal feasible. In this case the method terminates when
$z\B+r\B\ge 0$ is satisfied. Conversely, the dual method of
Section~\ref{sec-dual} imposes dual feasibility by means of the bounds $z\N
+ r\N \ge 0$ and terminates when $x\B+q\B\ge 0$.

In both methods, an iteration starts and ends with a second-order
consistent basis, and comprises one or more \emph{subiterations}.  In each
subiteration an index $l$ and index sets $\setB$ and $\setN$ are known such
that $\setB \cup \{ l \} \cup \setN = \{1$, $2$, \dots, $n\}$. This
partition defines a search direction $(\dx,\dy,\dz)$ that satisfies the
identities
\begin{equation}\label{eqn-dxdydz}
\begin{alignedat}{2}
  H \dx - A\T \dy -   \dz &= 0, \qquad&   \dx\N &= 0, \\
          A   \dx +  M\dy &= 0, \qquad&   \dz\B &= 0.
\end{alignedat}
\end{equation}
As $l\not\in\setB$ and $l\not\in\setN$, these conditions imply that neither
$\dx_l$ nor $\dz_l$ are restricted to be zero.  The conditions $\dx\N = 0$
and $\dz\B = 0$ imply that (\ref{eqn-dxdydz}) may be expressed in the
partitioned-matrix form
\[
  \pmat{ h\lld  & h\Bl^T & a_l^T & \mbc{a_l^T}{1} &              \\[1pt]
         h\Bl   & H\BB   & A\B^T &                &              \\[1pt]
         h\Nld  & H\BN^T & A\N^T &                &\mbc{A\N^T}{I}\\[1pt]
         a_l    & A\B    & -M    &                &                }
  \pmat{\m \dx_l \\
        \m \dx\B \\
         - \dy   \\
         - \dz_l \\
         - \dz\N   }
= \pmat{ 0 \\[1pt] 0 \\[1pt] 0 \\[1pt] 0 },
\]
where $h\ll$ denotes the $l$th diagonal of $H$, and the column vectors $h\Bl$ and $h\Nl$ denote
the column vectors of elements $h_{il}$ and $h_{jl}$ with $i\in\setB$, and
$j\in\setN$, respectively.  It follows that $\dx_l$, $\dx\B$, $\dy$ and
$\dz_l$ satisfy the homogeneous equations
\begin{subequations}\label{eqn-hom}
\begin{gather}
  \pmat{ h\lld  & h\Bl^T & a_l^T & \mbc{a_l^T}{1}\\
         h\Bld  & H\BBd  & A\B^T &               \\
         a_l    & A\B    &  -M                     }
             \pmat{ \m \dx_l \\
                    \m \dx\B \\
                     - \dy   \\
                     - \dz_l   }
          = \pmat{ 0 \\ 0 \\ 0 },                     \label{eqn-homa} \\
\intertext{and $\dz\N$ is given by}
  \dz\N   = h\Nld \dx\ld + H\BN^T \dx\Bd - A\N^T \dy. \label{eqn-homb}
\end{gather}
\end{subequations}
The properties of these equations are established in the next subsection.

\subsection{The linear algebra framework}
This section establishes the linear algebra framework that serves to
emphasize the underlying symmetry between the primal and dual methods.  It
is shown that the search direction for the primal and the dual method is a
nonzero solution of the homogeneous equations (\ref{eqn-homa}), i.e., every
direction is a nontrivial null vector of the matrix of (\ref{eqn-homa}). In
particular, it is shown that the null-space of (\ref{eqn-homa}) has
dimension one, which implies that the solution of (\ref{eqn-homa}) is
unique up to a scalar multiple.  The length of the direction is then
completely determined by fixing either $\dx_l =1$ or $\dz_l=1$.  The choice
of which component to fix depends on whether or not the corresponding
component in a null vector of (\ref{eqn-homa}) is nonzero. The conditions
are stated precisely in Propositions~\ref{prop-KBnonsing} and
\ref{prop-Klnonsing} below.

The first result shows that the components $\dx_l$ and $\dz_l$ of any
direction $(\dx$, $\dy$, $\dz)$  satisfying the identities
(\ref{eqn-dxdydz}) must be such that $\dx_l\dz_l\ge 0$.

\begin{proposition}\label{prop-dotproduct}
If the vector $(\dx,\dy,\dz)$ satisfies the identities
\begin{align*}
 H \dx - A\T \dy -   \dz &= 0, \\
         A   \dx +  M\dy &= 0,
\end{align*}
then $\dx^T\dz = \dx\T H \dx + \dy\T M \dy \ge 0$.  Moreover, given an
index $l$ and index sets $\setB$ and $\setN$ such that
$\setB\cup\{l\}\cup\setN = \{1$, $2$, \dots, $n\}$ with $\dx\N =0$ and
$\dz\B = 0$, then $\dx_l\dz_l =\dx\T H \dx + \dy\T  M \dy \ge 0$.
\end{proposition}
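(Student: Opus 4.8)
The plan is to prove both identities directly from the two linear relations, and then specialize to the index-set partition. First I would take the dual-feasibility-type equation $H\dx - A\T\dy - \dz = 0$ and form the inner product with $\dx$, giving $\dx\T H\dx - \dx\T A\T\dy - \dx\T\dz = 0$, i.e. $\dx\T\dz = \dx\T H\dx - (A\dx)\T\dy$. Then I would use the second relation $A\dx + M\dy = 0$ to substitute $A\dx = -M\dy$, so that $(A\dx)\T\dy = -\dy\T M\dy$, and hence $\dx\T\dz = \dx\T H\dx + \dy\T M\dy$. Nonnegativity is then immediate from the hypothesis that $H$ and $M$ are symmetric positive semidefinite.

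For the second statement, I would observe that the partition $\setB\cup\{l\}\cup\setN = \{1,2,\dots,n\}$ with $\dx\N = 0$ and $\dz\B = 0$ forces the full inner product $\dx\T\dz$ to collapse to a single term. Writing $\dx\T\dz = \dx\B\T\dz\B + \dx_l\dz_l + \dx\N\T\dz\N$ and using $\dz\B = 0$ and $\dx\N = 0$, all terms except $\dx_l\dz_l$ vanish, so $\dx_l\dz_l = \dx\T\dz = \dx\T H\dx + \dy\T M\dy \ge 0$ by the first part. This also yields the bonus conclusion mentioned in the text preceding the proposition, namely $\dx_l\dz_l \ge 0$.

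This proof is essentially a two-line computation plus a bookkeeping observation, so I do not anticipate any genuine obstacle. The only point requiring a modicum of care is the sign convention in the substitution step: one must be careful that eliminating $A\dx$ via $A\dx = -M\dy$ turns the term $-(A\dx)\T\dy$ into $+\dy\T M\dy$ rather than $-\dy\T M\dy$, and that the symmetry of $M$ is what allows $(M\dy)\T\dy = \dy\T M\dy$. Likewise one should note explicitly that the identities in the proposition are exactly the first row-block and third row-block of \eqref{eqn-dxdydz} (equivalently of \eqref{eqn-homa}), so that the result applies verbatim to every search direction generated by the methods.
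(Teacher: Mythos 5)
Your proposal is correct and follows essentially the same route as the paper: form the inner product of the first identity with $\dx$, use the second identity to replace $(A\dx)\T\dy$ by $-\dy\T M\dy$ (the paper phrases this as premultiplying the second identity by $\dy\T$ and eliminating the cross term, which is the same computation), and then observe that $\dx\N=0$, $\dz\B=0$ collapse $\dx\T\dz$ to $\dx_l\dz_l$. No gaps.
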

\begin{proof}
Premultiplying the first identity by $\dx^T$ and the second by $\dy^T$ gives
\[
 \dx\T H \dx - \dx\T A\T\dy - \dx\T\dz   = 0,  \wordss{and}
               \dy\T A  \dx + \dy\T  M\dy = 0.
\]
Eliminating the term $\dx\T A\T\dy$ gives $\dx\T H \dx + \dy\T  M \dy =
\dx\T\dz$.  By definition, $H$ and $M$ are symmetric positive semidefinite,
which gives $\dx\T\dz\ge 0$. In particular, if $\setB\cup\{l\}\cup\setN =
\{ 1$, $2$, \dots, $n \}$, with $\dx\N = 0$ and $\dz\B=0$, it must hold
that $\dx\T\dz =\dx_l\dz_l\ge 0$.
\end{proof}

The set of vectors $(\dx_l$, $\dx\B$, $\dy$, $\dz_l$, $\dz\N)$ satisfying
the equations (\ref{eqn-hom}) is completely characterized by the properties
of the matrices $K\B$ and $K_l$ such that
\begin{equation}  \label{eqn-KB-Kl-defined}
 K\B = \pmat{ H\BBd  & A\B^T \\
              A\B    &  -M      }  \wordss{and}
 K_l = \pmat{ h\lld  & h\Bl^T & a_l^T \\
              h\Bld  & H\BBd  & A\B^T \\
              a_l    & A\B    &  -M     }.
\end{equation}
The properties are summarized by the results of the following two propositions.

\begin{proposition}\label{prop-KBnonsing}
  Assume that $K\B$ is nonsingular. Let $\dx_l$ be a given nonnegative
  scalar.
 \begin{enumerate}[\rm1.]
 \item\label{prop-KBnonsing:dx-zero} If $\dx_l = 0$, then the only solution of
  {\rm(\ref{eqn-hom})} is zero, i.e., $\dx\B=0$, $\dy=0$, $\dz_l=0$
  and $\dz\N=0$.

 \item\label{prop-KBnonsing:dx-pos}  If $\dx_l > 0$, then the quantities $\dx\B$, $\dy$,
  $\dz_l$ and $\dz\N$ of {\rm(\ref{eqn-hom})} are unique and satisfy
  the equations
\begin{equation}\label{eqn-KB}
\begin{aligned}
 \pmat{ H\BBd & A\B^T \\
         A\B  & -M      }
 \pmat{ \m \dx\B \\
         - \dy     }
       &= -\pmat{ h\Bl \\
                  a_l    } \dx_l,                  \\
 \dz_l &= h\lld \dx\ld + h\Bl^T \dx\Bd - a_l\T \dy,\\
 \dz\N &= h\Nld \dx\ld + H\BN^T \dx\Bd - A\N^T \dy.  
\end{aligned}
\end{equation}
Moreover, either
\begin{enumerate}[\rm(i)]
 \item\label{prop-KBnonsing:dz-pos} $K_l$ is nonsingular and $\dz_l>0$, or

 \item\label{prop-KBnonsing:dz-zero} $K_l$ is singular and $\dz_l=0$, in
   which case it holds that $\dy=0$, $\dz\N=0$, and the multiplicity of the
   zero eigenvalue of $K_l$ is one, with corresponding eigenvector
   $(\dx_l,\dx\B,0)$.
\end{enumerate}
\end{enumerate}
\end{proposition}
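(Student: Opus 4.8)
The plan is to reduce everything to the block structure of $K_l$ over its nonsingular subblock $K\B$ together with the associated Schur complement. Write $u$ for the stacked column vector $(h\Bl,a_l)$, so that $K_l=\pmat{h\ll & u\T\\ u & K\B}$, and set $\gamma=h\ll-u\T K\B\inv u$. Since $K\B$ is nonsingular, $\det K_l=\gamma\det K\B$, so $K_l$ is nonsingular if and only if $\gamma\ne 0$; moreover any vector $(\omega,w)$ in the null space of $K_l$ satisfies $\omega u+K\B w=0$, so $w=-\omega K\B\inv u$, which shows that a null vector is completely determined by its scalar first component $\omega$ and that a null vector with $\omega=0$ is zero. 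In particular the null space of $K_l$ has dimension at most one, with dimension exactly one precisely when $\gamma=0$, that is, precisely when $K_l$ is singular.

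First, the case $\dx_l=0$ is immediate: the last two block rows of (\ref{eqn-homa}) read $K\B(\dx\B,-\dy)=0$, so $\dx\B=0$ and $\dy=0$ because $K\B$ is nonsingular; the first row of (\ref{eqn-homa}) gives $\dz_l=0$, and (\ref{eqn-homb}) gives $\dz\N=0$. Next, assume $\dx_l>0$. Now the last two block rows of (\ref{eqn-homa}) read $K\B(\dx\B,-\dy)=-u\,\dx_l$, whose unique solution is $(\dx\B,-\dy)=-K\B\inv u\,\dx_l$; substituting this into the first row of (\ref{eqn-homa}) and into (\ref{eqn-homb}) determines $\dz_l$ and $\dz\N$ uniquely, which is exactly (\ref{eqn-KB}). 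The same substitution gives $\dz_l=(h\ll-u\T K\B\inv u)\dx_l=\gamma\,\dx_l$. By Proposition~\ref{prop-dotproduct}, $\dx_l\dz_l=\dx\T H\dx+\dy\T M\dy\ge 0$, so $\gamma=\dz_l/\dx_l\ge 0$. If $\gamma>0$, then $\dz_l=\gamma\dx_l>0$ and $K_l$ is nonsingular, which is case~(i). If $\gamma=0$, then $\dz_l=0$ and $K_l$ is singular, which is case~(ii); it remains to establish the additional claims there.

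So suppose $\gamma=0$, hence $\dz_l=0$. Proposition~\ref{prop-dotproduct} gives $\dx\T H\dx+\dy\T M\dy=\dx_l\dz_l=0$, and since $H$ and $M$ are positive semidefinite each summand vanishes, so $H\dx=0$ and $M\dy=0$; combined with the identity $A\dx+M\dy=0$ from (\ref{eqn-dxdydz}), this also gives $A\dx=0$. Using $\dx\N=0$, a direct block multiplication then shows
\[
 K_l\pmat{\dx_l\\ \dx\B\\ 0}=\pmat{(H\dx)_l\\ (H\dx)\B\\ A\dx}=0,
\]
so $(\dx_l,\dx\B,0)$ lies in the null space of $K_l$; and $(\dx_l,\dx\B,-\dy)$ does too, since (\ref{eqn-homa}) says $K_l(\dx_l,\dx\B,-\dy)=(\dz_l,0,0)=0$. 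Subtracting, $(0,0,-\dy)$ is a null vector of $K_l$ with vanishing scalar part, hence is zero by the first paragraph; therefore $\dy=0$, the eigenvector is $(\dx_l,\dx\B,0)$, and $\dz\N=(H\dx)\N-A\N\T\dy=0$ by (\ref{eqn-homb}). Finally, since $K_l$ is singular its null space has dimension exactly one, and because $K_l$ is symmetric this is also the multiplicity of its zero eigenvalue.

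The one genuinely delicate step is the vanishing of $\dy$ — and with it $\dz\N$ — in case~(ii). It does not follow merely from $M\dy=0$ and the full-row-rank hypothesis on $\tmat{A}{M}$, because the relations one extracts constrain $\dy$ only against the columns of $A\B$, the single column $a_l$, and $M$, and not against the columns of $A\N$. The device that closes the gap is to exhibit the truncated vector $(\dx_l,\dx\B,0)$ as a second member of the null space of $K_l$ and then invoke the one-dimensionality of that null space, which is precisely where the nonsingularity of $K\B$ re-enters.
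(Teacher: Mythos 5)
Your proof is correct, but it is organized quite differently from the paper's. The paper treats the two alternatives of part~2 by separate contradiction arguments: for (i) it rewrites the homogeneous system as $K_l(\dx_l,\dx\B,-\dy)=(1,0,0)\dz_l$ and argues that $\dz_l=0$ would force $\dx_l=0$; for (ii) it first constructs a null vector of $K_l$ with nonzero leading component and invokes the one-dimensionality of the solution set to conclude $\dz_l=0$, and then obtains $\dy=0$ and $\dz\N=0$ by splitting the singular system via Propositions~\ref{propA-nonsing} and \ref{propA-diag} together with the full row rank of $\tmat{A\B}{-M}$. You instead make the Schur complement $\gamma=h\lld-u\T K\B\inv u$ (with $u=(h\Bl,a_l)$) the organizing object: the identity $\dz_l=\gamma\hthinsp\dx_l$, combined with $\gamma\ge 0$ from Proposition~\ref{prop-dotproduct} and with $\det K_l=\gamma\det K\B$, delivers the dichotomy (i)/(ii) in one stroke rather than as two contrapositives, and your parametrization $w=-\omega K\B\inv u$ of the null space gives the multiplicity-one claim immediately. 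For the remaining claims in case (ii) you re-derive $H\dx=0$, $M\dy=0$, $A\dx=0$ inline (essentially the content of Proposition~\ref{propA-nonsing}'s proof), exhibit $(\dx_l,\dx\B,0)$ as a second null vector, and subtract to kill $\dy$ --- a clean substitute for the paper's rank argument on $\tmat{A\B}{-M}$, and one that makes explicit, as you note, exactly where the nonsingularity of $K\B$ is consumed. The paper's route has the advantage of reusing appendix lemmas that are needed elsewhere; yours is more self-contained and makes the quantitative link between $\dz_l$ and the singularity of $K_l$ visible. Both are complete.
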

\begin{proof}
Proposition~\ref{prop-dotproduct} implies that $\dz_l\ge 0$ if $\dx_l>0$,
which implies that the statement of the proposition includes all possible
values of $\dz_l$. The second and third blocks of the equations
(\ref{eqn-homa}) imply that
\begin{equation}\label{eqn-dxzero}
   \pmat{ h\Bl \\
          a_l    } \dx_l
 + \pmat{ H\BB   & A\B^T \\
          A\B    &  -M     }
   \pmat{ \m \dx\B\\
           - \dy    }
 = \pmat{ 0 \\
          0   }.
\end{equation}
As $K\B$ is nonsingular by assumption, the vectors $\dx\B$ and $\dy$ must
constitute the unique solution of (\ref{eqn-dxzero}) for a given value
of $\dx_l$.  Furthermore, given $\dx\B$ and $\dy$, the quantities $\dz_l$
and $\dz\N$ of (\ref{eqn-KB}) are also uniquely defined.  The specific
value $\dx_l=0$, gives $\dx\B=0$ and $\dy=0$, so that $\dz_l=0$ and
$\dz\N=0$.  It follows that $\dx_l$ must be nonzero for at least one of the
vectors $\dx\B$, $\dy$, $\dz_l$ or $\dz\N$ to be nonzero.

Next it is shown that if $\dx_l>0$, then either
(\ref{prop-KBnonsing:dz-pos}) or (\ref{prop-KBnonsing:dz-zero}) must hold.
For (\ref{prop-KBnonsing:dz-pos}), it is necessary to show that if $\dx_l>0$ and $K_l$ is
nonsingular, then $\dz_l >0$.  If $K_l$ is nonsingular, the homogeneous
equations (\ref{eqn-homa}) may be written in the form
\begin{align}\label{eqn-Klnonsing}
 \pmat{ h\lld  & h\Bl^T & a_l^T      \\
        h\Bl   & H\BB   & A\B^T      \\
        a_l    & A\B    & -M          }
             \pmat{ \m \dx_l \\
                    \m \dx\B \\
                     - \dy    }
   &= \pmat{ 1 \\ 0 \\ 0 } \dz_l,
\end{align}
which implies that $\dx_l$, $\dx\B$ and $\dy$ are unique for a given value
of $\dz_l$. In particular, if $\dz_l =0$ then $\dx_l = 0$, which would
contradict the assumption that $\dx_l >0$. If follows that $\dz_l$ must be
nonzero.  Finally, Proposition~\ref{prop-dotproduct} implies that if $\dz_l$ is
nonzero and $\dx_l>0$, then $\dz_l>0$ as required.

For the first part of (\ref{prop-KBnonsing:dz-zero}), it must be shown that
if $K_l$ is singular, then $\dz_l=0$.  If $K_l$ is singular, it must have a
nontrivial null vector $(p_l$, $p\B$, $-u)$. Moreover, every null vector
must have a nonzero $p_l$, because otherwise $(p\B$, $-u)$ would be a
nontrivial null vector of $K\B$, which contradicts the assumption that
$K\B$ is nonsingular. A fixed value of $p_l$ uniquely defines $p\B$ and
$u$, which indicates that the multiplicity of the zero eigenvalue must be
one. A simple substitution shows that $(p_l$, $p\B$, $-u$, $v_l)$ is a
nontrivial solution of the homogeneous equation (\ref{eqn-homa}) such that
$v_l= 0$.  As the subspace of vectors satisfying (\ref{eqn-homa}) is of
dimension one, it follows that every solution is unique up to a scalar
multiple.  Given the properties of the known solution $(p_l$, $p\B$, $-u$,
$0)$, it follows that every solution $(\dx_l$, $\dx\B$, $-\dy$, $-\dz_l)$
of (\ref{eqn-homa}) is an eigenvector associated with the zero eigenvalue
of $K_l$, with $\dz_l = 0$.

For the second part of (\ref{prop-KBnonsing:dz-zero}), if $\dz_l = 0$, the
homogeneous equations (\ref{eqn-homa}) become
\begin{equation}\label{eqn-Klsing}
\pmat{   h\lld  & h\Bl^T & a_l^T      \\
         h\Bl   & H\BB   & A\B^T      \\
         a_l    & A\B    &  -M           }
             \pmat{ \m \dx_l \\
                    \m \dx\B \\
                     - \dy }
             = \pmat{ 0 \\ 0 \\ 0 }.
\end{equation}
As $K_l$ is singular in (\ref{eqn-Klsing}), Proposition~\ref{propA-nonsing}
of the Appendix implies that
\begin{equation}\label{eqn-KlsingII}
 \pmat{   h\lld  & h\Bl^T      \\
          h\Bl   & H\BB        \\
          a_l    & A\B           }
       \pmat{ \dx_l \\
              \dx\B }
 = \pmat{ 0 \\ 0 \\ 0 },           \wordss{and}
   \pmat{   a_l^T   \\
            A\B^T   \\
           -M         } \dy
 = \pmat{ 0 \\ 0 \\ 0 }.
\end{equation}
The nonsingularity of $K\B$ implies that $\tmat{ A\B }{ -M }$ has full row
rank, in which case the second equation of (\ref{eqn-KlsingII}) gives
$\dy=0$.  It follows that every eigenvector of $K_l$ associated with the
zero eigenvalue has the form $(\dx_l$, $\dx\B$, $0)$.  It remains to show
that $\dz\N=0$. If Proposition~\ref{propA-diag} of the Appendix is applied
to the first equation of (\ref{eqn-KlsingII}), then it must hold that
\[
 \pmat{   h\lld  & h\Bl^T  \\
          h\Bl   & H\BB    \\
          h\Nl   & H\BN^T   }
       \pmat{ \dx_l \\
              \dx\B   }
 = \pmat{ 0 \\ 0 \\ 0 }.
\]
It follows from the definition of $\dz\N$ in (\ref{eqn-KB}) that $\dz\Nd
= h\Nld\dx\ld + H\BN^T \dx\Bd - A\N^T \dy = 0$, which completes the proof.
\end{proof}

\begin{proposition}\label{prop-Klnonsing}
  Assume that $K_l$ is nonsingular. Let $\dz_l$ be a given nonnegative
  scalar.
 \begin{enumerate}[\rm1.]
 \item\label{prop-Klnonsing:dz-zero}
  If $\dz_l = 0$, then the only solution of
  {\rm(\ref{eqn-hom})} is zero, i.e., $\dx_l=0$, $\dx\B=0$, $\dy=0$
  and $\dz\N=0$.

 \item\label{prop-Klnonsing:dz-pos}
  If $\dz_l > 0$, then the quantities $\dx_l$,
  $\dx\B$, $\dy$ and $\dz\N$ of {\rm(\ref{eqn-hom})} are unique and
  satisfy the equations
\begin{subequations}\label{eqn-Klsolve}
\begin{align}
   \pmat{ h\lld & h\Bl^T & a_l^T \\
          h\Bld & H\BBd  & A\B^T \\
          a_l   & A\B    & -M      }
   \thnmtx{l}{ \m \dx_l \\
               \m \dx\B \\
               -  \dy }
 &= \pmat{ 1 \\
           0 \\
           0   } \dz_l,                      \label{eqn-KBmain} \\
 \dz\N
 &= H\Nld \dx\ld + H\BN^T \dx\Bd - A\N^T \dy. \label{eqn-dzN}
\end{align}
\end{subequations}
Moreover, either
\begin{enumerate}[\rm(i)]
 \item\label{prop-Klnonsing:dx-pos} $K\B$ is nonsingular and $\dx_l>0$, or

 \item\label{prop-Klnonsing:dx-zero} $K\B$ is singular and $\dx_l=0$, in which case, it holds that
   $\dx\B=0$ and the multiplicity of the zero eigenvalue of $K\B$ is one,
   with corresponding eigenvector $(0,\dy)$.
\end{enumerate}
\end{enumerate}
\end{proposition}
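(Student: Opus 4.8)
The plan is to prove Proposition~\ref{prop-Klnonsing} by exploiting the symmetry between the roles of $K\B$ and $K_l$, essentially mirroring the argument used for Proposition~\ref{prop-KBnonsing}. Since $K_l$ is assumed nonsingular, the homogeneous system (\ref{eqn-homa}) can be rewritten in the form (\ref{eqn-KBmain}), where $\dx_l$, $\dx\B$ and $\dy$ are determined uniquely as a linear function of the scalar $\dz_l$; the remaining component $\dz\N$ is then recovered from (\ref{eqn-homb}), giving (\ref{eqn-dzN}). This immediately establishes both the uniqueness claim of part~\ref{prop-Klnonsing:dz-pos} and, by setting $\dz_l=0$ in (\ref{eqn-KBmain}), the fact that $\dz_l=0$ forces $\dx_l=\dx\B=\dy=0$ and hence $\dz\N=0$, which is part~\ref{prop-Klnonsing:dz-zero}.

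For the dichotomy in part~\ref{prop-Klnonsing:dz-pos}, assume $\dz_l>0$. Proposition~\ref{prop-dotproduct} gives $\dx_l\dz_l = \dx\T H\dx + \dy\T M\dy \ge 0$, so $\dx_l\ge 0$, which means the two cases $\dx_l>0$ and $\dx_l=0$ exhaust all possibilities. If $K\B$ is nonsingular, then the second and third block rows of (\ref{eqn-homa}) take exactly the form (\ref{eqn-dxzero}), so $\dx\B$ and $\dy$ are the unique solution of that system for the given $\dx_l$; were $\dx_l=0$ we would get $\dx\B=0$, $\dy=0$, and then the first block row of (\ref{eqn-homa}) would give $\dz_l=h\lld\dx_l + h\Bl^T\dx\B - a_l\T\dy = 0$, contradicting $\dz_l>0$. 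Hence $\dx_l>0$, establishing case~(\ref{prop-Klnonsing:dx-pos}).

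It remains to handle case~(\ref{prop-Klnonsing:dx-zero}): suppose $K\B$ is singular. Then $K\B$ has a nontrivial null vector $(p\B,-u)$. A direct substitution shows that $(0,p\B,-u,0)$ solves the homogeneous system (\ref{eqn-homa}) with its $\dz_l$-component equal to zero. Because $K_l$ is nonsingular, this forces $p\B=0$ and $u=0$ unless — and here is the subtle point — we reinterpret which system we are in: more carefully, extend $(p\B,-u)$ to the vector $(0, p\B, -u)$ and observe it lies in the null space of the $3\times3$ block matrix of (\ref{eqn-KBmain}) only after accounting for the first column $(h\lld, h\Bl, a_l)$; since that matrix is $K_l$, which is nonsingular, the null vector must vanish, so $p\B=0$, $u=0$ — a contradiction. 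Therefore the correct reading is that $K\B$ singular together with $\dz_l>0$ forces $\dx_l=0$: from $\dx_l\dz_l = \dx\T H\dx+\dy\T M\dy$ and $\dx_l\ge 0$, if $\dx_l>0$ the argument of the previous paragraph (which only used nonsingularity of $K\B$) would apply, so the failure of that nonsingularity is consistent only with $\dx_l=0$. With $\dx_l=0$, (\ref{eqn-KBmain}) reduces to $K\B(\dx\B,-\dy)\T = (0,0)\T$ scaled appropriately, but since the right-hand side of (\ref{eqn-KBmain}) is $(1,0,0)\T\dz_l$ with $\dz_l>0$ and first component nonzero, one instead uses that the $\dx\B$, $\dy$ part must be a null vector of $K\B$; uniqueness of the solution of (\ref{eqn-homa}) (guaranteed by $K_l$ nonsingular) then shows this null vector is one-dimensional, that $\dx\B=0$ by applying Proposition~\ref{propA-diag} and the full-row-rank consequences as in the proof of Proposition~\ref{prop-KBnonsing}, wait — more precisely, $\dx\B=0$ follows because $(0,\dx\B,-\dy)$ null for $K_l$ with $K_l$ nonsingular forces $\dx\B=0$ and $\dy$ arbitrary only if the corresponding columns are dependent; the clean statement is that the eigenvector of the zero eigenvalue of $K\B$ has the form $(0,\dy)$, i.e.\ $\dx\B=0$, and its multiplicity is one. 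The main obstacle is getting this last case logically straight: one must carefully separate "what (\ref{eqn-homa}) forces given $K_l$ nonsingular" from "what $K\B$ singular forces," and invoke Propositions~\ref{propA-nonsing} and \ref{propA-diag} of the Appendix exactly as in the dual argument for Proposition~\ref{prop-KBnonsing}, rather than re-deriving them.
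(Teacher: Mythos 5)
Your treatment of part~\ref{prop-Klnonsing:dz-zero}, the uniqueness claim in part~\ref{prop-Klnonsing:dz-pos}, and case~(\ref{prop-Klnonsing:dx-pos}) is correct and matches the paper's argument. The genuine gap is in case~(\ref{prop-Klnonsing:dx-zero}). First, your opening move is false: if $(p\B,-u)$ is a nontrivial null vector of $K\B$, the vector $(0,p\B,-u)$ is \emph{not} automatically a null vector of $K_l$, because the first block row of $K_l$ imposes the extra condition $h\Bl^T p\B - a_l\T u = 0$, which nothing guarantees. This is exactly why the paper does not argue this way; it instead invokes Proposition~\ref{propA-nonsing} to split the null vector of $K\B$ into a $u$ with $\tmat{H\BB}{A\B^T}^T u$-type annihilation and a $v$ with $\tmat{A\B}{-M}^T v$-type annihilation, and then uses Proposition~\ref{propA-diag} to conclude $h\Bl^T u = 0$ --- only after that step is $(0,u,0)$ a genuine null vector of $K_l$, forcing $u=0$ and locating the singularity of $K\B$ entirely in dependent rows of $\tmat{A\B}{-M}$. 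You cite these propositions at the very end but never actually deploy them, and the "direct substitution" you rely on does not go through without them.

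Second, your deduction that "$K\B$ singular together with $\dz_l>0$ forces $\dx_l=0$" is obtained by denying the antecedent: case~(\ref{prop-Klnonsing:dx-pos}) establishes that $K\B$ nonsingular implies $\dx_l>0$, whose contrapositive is $\dx_l=0\Rightarrow K\B$ singular --- not the implication you need. The paper closes this gap constructively: having shown $\tmat{H\BB}{A\B^T}$ has full row rank, the nonsingularity of $K_l$ gives $\tmatt{a_l}{A\B}{-M}$ full row rank, so there is a $v$ with $v\T a_l\ne 0$, $v\T A\B=0$, $v\T M=0$; scaling $v$ so that $v\T a_l=-\dz_l$ makes $(0,0,-v)$ an explicit solution of (\ref{eqn-dzzero}), and uniqueness (from $K_l$ nonsingular) identifies it with $(\dx_l,\dx\B,-\dy)$, yielding $\dx_l=0$, $\dx\B=0$, $\dy=v$, and the multiplicity-one statement. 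Your proposal never establishes $\dx\B=0$ (the assertion that $(0,\dx\B,-\dy)$ is "null for $K_l$" is incorrect, since $K_l$ maps it to $(1,0,0)^T\dz_l\ne 0$) and never proves the multiplicity claim. As written, case~(\ref{prop-Klnonsing:dx-zero}) is an acknowledged sketch built on a false intermediate step, not a proof.
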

\begin{proof}
In Proposition~\ref{prop-dotproduct} it is established that $\dx_l\ge 0$ if
$\dz_l>0$, which implies that the statement of the proposition includes all
possible values of $\dx_l$.

It follows from (\ref{eqn-homa}) that $\dx_l$, $\dx\B$, and $\dy$ must
satisfy the equations
\begin{equation}\label{eqn-dzzero}
 \pmat{ h\lld  & h\Bl^T & a_l^T  \\
        h\Bld  & H\BBd  & A\B^T  \\
        a_l    & A\B    &  -M     }
 \pmat{ \m \dx_l \\
        \m \dx\B \\
         - \dy     }
 = \pmat{ \dz_l \\
           0    \\
           0      }.
\end{equation}
Under the given assumption that $K_l$ is nonsingular, the vectors $\dx_l$,
$\dx\B$ and $\dy$ are uniquely determined by (\ref{eqn-dzzero}) for a fixed
value of $\dz_l$. In addition, once $\dx_l$, $\dx\B$ and $\dy$ are defined,
$\dz\N$ is uniquely determined by (\ref{eqn-dzN}).  It follows that if
$\dz_l=0$, then $\dx_l=0$, $\dx\B=0$, $\dy=0$ and $\dz\N=0$.

It remains to show that if $\dz_l>0$, then either
(\ref{prop-Klnonsing:dx-pos}) or (\ref{prop-Klnonsing:dx-zero}) must hold.
If $K\B$ is singular, then Proposition~\ref{propA-nonsing} of the Appendix
implies that there must exist $u$ and $v$ such that
\[
 \pmat{ H\BB \\
        A\B } u
 = \pmat{ 0 \\
          0 }          \wordss{and}
   \pmat{  A\B^T  \\
          -M       } v
 = \pmat{0 \\ 0 }.
\]
Proposition~\ref{propA-diag} of the Appendix implies that the vector $u$
must also satisfy $h\Bl^T u=0$.
If $u$ is nonzero, then $(0,u,0)$ is a nontrivial null vector for $K_l$,
which contradicts the assumption that $K_l$ is nonsingular.  It follows
that $\tmat{H\BB }{ A\B^T}$ has full row rank and the singularity of $K\B$
must be caused by dependent rows in $\tmat{A\B}{-M}$. The nonsingularity
of $K_l$ implies that $\tmatt{a_l}{A\B}{-M}$ has full row rank and there
must exist a vector $v$ such that $v\T a_l\ne 0$, $v\T A\B=0$ and $v\T
 M=0$. If $v$ is scaled so that $v\T a_l=-\dz_l$, then $(0,0,-v)$ must be a
solution of (\ref{eqn-dzzero}).  It follows that $\dx_l=0$, $v = \dy$,
and $(0,\dy)$ is an eigenvector of $K\B$ associated with a zero eigenvalue.
The nonsingularity of $K_l$ implies that $v$ is unique given the value of
the scalar $\dz_l$, and hence the zero eigenvalue has multiplicity one.

Conversely, $\dx_l=0$ implies that $(\dx\B,\dy)$ is a null vector for $K\B$.
However, if $K\B$ is nonsingular, then the vector is zero, contradicting
(\ref{eqn-KBmain}).  It follows that $K\B$ must be singular.
\end{proof}

\section{A Primal Active-Set Method for Convex QP} \label{sec-primal}
In this section a primal-feasible method for convex \QP{} is formulated.
Each iteration begins and ends with a point $(x,y,z)$ that satisfies the
conditions
\begin{equation}\label{eqn-primalxyz}
\begin{alignedat}{3}
 H x + c - A\T y -z &=   0, & \qquad  x\N + q\N &= 0, & \qquad x\B + q\B &\ge 0, \\
      Ax +  M y - b &=   0, & \qquad  z\B + r\B &= 0,
\end{alignedat}
\end{equation}
for appropriate second-order consistent bases.  The purpose of the
iterations is to drive $(x,y,z)$ to optimality by driving the dual
variables to feasibility (i.e., by driving the negative components of $z\N +
r\N$ to zero).  Methods for finding $\setB$ and $\setN$ at the initial
point are discussed in Section~\ref{sec-primal-dual}.

An iteration consists of a group of one or more consecutive
\emph{subiterations} during which a specific dual variable is made
feasible.  The first subiteration is called the \emph{base}
subiteration. In some cases only the base subiteration is performed, but,
in general, additional \emph{intermediate} subiterations are required.

At the start of the base subiteration, an index $l$ in the nonbasic set
$\setN$ is identified such that $z_l+r_l < 0$.  The idea is to remove the
index $l$ from $\setN$ (i.e., $\setN\gets \setN \setminus \{l\}$) and
attempt to increase the value of $z_l+r_l$ by taking a step along a
primal-feasible direction $(\dx_l$, $\dx\B$, $\dy$, $\dz_l)$.  The removal
of $l$ from $\setN$ implies that $\setB\cup\{l\}\cup\setN = \{ 1$, $2$,
\dots, $n\}$ with $\setB$ second-order consistent. This implies that $K\B$
is nonsingular and the (unique) search direction may be computed as in
(\ref{eqn-KB}) with $\dx_l=1$.

If $\dz_l > 0$, the step $\alphastar = -(z_l+r_l)/\dz_l$ gives
$z_l+\alphastar \dz_l + r_l = 0$.  Otherwise, $\dz_l=0$, and there is no
finite value of $\alpha$ that will drive $z_l + \alpha \dz_l + r_l$ to its
bound, and $\alphastar$ is defined to be $+\infty$.
Proposition~\ref{propA-dirs} of the Appendix implies that the case $\dz_l
=0$ corresponds to the primal objective function being linear and
decreasing along the search direction.

Even if $\dz_l$ is positive, it is not always possible to take the step
$\alphastar$ and remain primal feasible.  A positive step in the direction
$(\dx_l$, $\dx\B$, $\dy$, $\dz_l)$ must increase $x_l$ from its bound, but
may decrease some of the basic variables. This makes it necessary to limit
the step to ensure that the primal variables remain feasible. The largest
step length that maintains primal feasibility is given by
\[
  \alphamax = \min_{ i \st \dx_i < 0} \sgap \frac{x_i + q_i}{-\dx_i}.
\]
If $\alphamax$ is finite, this value gives $x_k + \alphamax \dx_k + q_k =
0$, where $k$ is the index $k = \argmin_{i \st \dx_i < 0}\sgap (x_i +
q_i)/(-\dx_i)$.  The overall step length is then $\alpha =
\min\big(\alphastar,\alphamax\big)$.  An infinite value of $\alpha$
indicates that the primal problem $(\Primal_{q,r})$ is unbounded, or,
equivalently, that the dual problem $(\Dual_{q,r})$ is infeasible. In this
case, the algorithm is terminated.  If the step $\alpha = \alphastar$ is
taken, then $z_l+ \alpha \dz_l+r_l=0$, the subiterations are terminated
with no intermediate subiterations and $\setB \gets \setB \cup \{ l \}$.
Otherwise, $\alpha = \alphamax$, and the basic and nonbasic sets are
updated as $\setB \gets \setB\setminus \{k\}$ and $\setN \gets \setN\cup
\{k\}$ giving a new partition $\setB\cup\{l\}\cup\setN = \{ 1$, $2$,
\dots, $n\}$.  In order to show that the equations associated
with the new partition are well-defined, it is necessary to show that
allowing $z_k$ to move does not give a singular $K_l$.
Proposition~\ref{propA-primalnonsing} of the Appendix shows that the
submatrix $K_l$ associated with the updated $\setB$ and $\setN$ is
nonsingular for the cases $\dz_l>0$ and $\dz_l=0$.

Because the removal of $k$ from $\setB$ does not alter the nonsingularity
of $K_l$, it is possible to add $l$ to $\setB$ and thereby define a unique
solution of the system (\ref{eqn-B-N-lin}). However, if $z_l+r_l<0$, additional
intermediate subiterations are required to drive $z_l + r_l$ to zero. In
each of these subiterations, the search direction is computed by choosing
$\dz_l=1$ in Proposition~\ref{prop-Klnonsing}.  The step length
$\alphastar$ is given by $\alphastar = -(z_l+r_l)/\dz_l$ as in the base
subiteration above, but now $\alphastar$ is always finite because $\dz_l=1$.
Similar to the base subiteration, if no constraint is added, then
$z_l+\alphastar \dz_l+r_l=0$. Otherwise, the index of another blocking
variable $k$ is moved from $\setB$ to
$\setN$. Proposition~\ref{propA-primalnonsing} implies that the updated matrix
$K_l$ is nonsingular at the end of an intermediate subiteration. As a
consequence, the intermediate subiterations may be repeated until $z_l +
r_l$ is driven to zero.

At the end of the base subiteration or after the intermediate subiterations
are completed, it must hold that $z_l + r_l = 0$ and the final $K_l$ is
nonsingular.  This implies that a new iteration may be initiated with the
new basic set $\setB \cup \{l\}$ defining a nonsingular $K\B$.

The primal active-set method is summarized in Algorithm~\ref{alg-primalqp}
below. The convergence properties of Algorithm~\ref{alg-primalqp} are
established in Section~\ref{sec-primal-dual}, which concerns a general
primal algorithm that includes Algorithm~\ref{alg-primalqp} as a special
case.

\begin{algorithm}
\caption{\label{alg-primalqp}\sgap A primal active-set method for convex \QP\@.}
\begin{algorithmic}[0]
  \State Find $(x,y,z)$ satisfying conditions (\ref{eqn-primalxyz})
  for some second-order consistent basis $\setB$;

         \While{$\exists\ l \st z_l+r_l<0$}
\State      $\setN\gets \setN\setminus \{l\}$;
\State      \Call{\algname{primal\_base}}{$\setB$, $\setN$, $l$, $x$, $y$, $z$};
                                     \Comment{returns $\setB$, $\setN$, $x$, $y$, $z$}
            \While{$z_l + r_l < 0$}
\State        \Call{\algname{primal\_intermediate}}{$\setB$, $\setN$, $l$, $x$, $y$, $z$};
                                     \Comment{returns $\setB$, $\setN$, $x$, $y$, $z$}
            \EndWhile
\State      $\setB\gets \setB\cup \{l\}$;
         \EndWhile
  \end{algorithmic}

\begin{algorithmic}[0]
\Function{\algname{primal\_base}}{$\setB$, $\setN$, $l$, $x$, $y$, $z$}
\State         $\dx_l \gets 1$; \quad Solve
               $\pmat{ H\BB & A\B^T \\
                       A\B  & -M      }
                \pmat{ \m \dx\B \\
                        - \dy }
               = -\pmat{ h\Bl \\
                         a_l    }$;
\State         $\dz\N \gets h\Nld \dx\ld + H\BN^T \dx\Bd - A\N^T \dy$;
\State         $\dz_l \gets h\lld \dx\ld + h\Bl^T \dx\Bd - a_l^T \dy$;
               \Comment{$\dz_l \ge 0$}
\State         $\alphastar \gets -(z_l+r_l)/\dz_l$;
               \Comment{$\alphastar \gets+\infty$ if $\dz_l = 0$}
\State         $\disp \alphamax \gets \min_{i \st \dx_i < 0}\mgap(x_i + q_i)/(-\dx_i)$; \agap
               $\disp k \gets \argmin_{i \st \dx_i <   0}\mgap(x_i + q_i)/(-\dx_i)$;
\State         $\alpha \gets \min\big(\alphastar,\alpha\submax \big)$;
               \If{$\alpha = +\infty$}
\State            $\STOP$;     \Comment{$(\Dual_{q,r})$ is infeasible}
               \EndIf
\State         $x_l \gets x_l + \alpha \dx_l$; \agap
               $x\B \gets x\B + \alpha \dx\B$;
\State         $y   \gets y   + \alpha \dy$;   \agap
               $z_l \gets z_l + \alpha \dz_l$; \agap
               $z\N \gets z\N + \alpha \dz\N$;
               \If{$z_l+r_l<0$}
\State            $\setB \gets \setB \setminus \{k\}$; \agap $\setN\gets \setN \cup \{k\}$;
               \EndIf
\State         \Return{$\setB$, $\setN$, $x$, $y$, $z$};
\EndFunction
\end{algorithmic}
\begin{algorithmic}[0]
\Function{\algname{primal\_intermediate}}{$\setB$, $\setN$, $l$, $x$, $y$, $z$}
\State         $\dz_l \gets 1$; \agap Solve
               $\pmat{ h\lld & h\Bl^T & a_l^T \\
                       h\Bld & H\BBd  & A\B^T \\
                       a_l   & A\B    & -M }
                \thnmtx{l}{ \m \dx_l \\
                            \m \dx\B \\
                             - \dy }
                = \pmat{ 1 \\
                         0 \\
                         0   }$;
               \Comment{$\dx_l \ge 0$}
\State         $\dz\N \gets H\Nld \dx\ld + H\BN^T \dx\B - A\N^T \dy$;
\State         $\disp \alphastar    \gets -(z_l+r_l)$;
\State         $\disp \alpha\submax \gets \min_{i \st \dx_i < 0}   \mgap(x_i + q_i)/(-\dx_i)$; \agap
               $\disp             k \gets \argmin_{i \st \dx_i < 0}\mgap(x_i + q_i)/(-\dx_i)$;
\State         $\alpha \gets \min\big(\alphastar,\alpha\submax \big)$;
\State         $x_l \gets x_l + \alpha \dx_l$; \agap $x\B \gets x\B + \alpha \dx\B$;
\State         $y   \gets y   + \alpha \dy$;   \agap $z_l \gets z_l + \alpha \dz_l$;
                                               \agap $z\N \gets z\N + \alpha \dz\N$;
               \If{$z_l+r_l<0$}
\State            $\setB \gets \setB \setminus \{k\}$; \agap $\setN\gets \setN \cup \{k\}$;
               \EndIf
\State         \Return{$\setB$, $\setN$, $x$, $y$, $z$};
\EndFunction
\end{algorithmic}
\end{algorithm}

\section{A Dual Active-Set Method for Convex QP}  \label{sec-dual}
Each iteration of the dual active-set method begins and ends with a point
$(x,y,z)$ that satisfies the conditions
\begin{equation}\label{eqn-dualxyz}
\begin{alignedat}{2}
 H x + c - A\T y -z &=   0, & \qquad  x\N + q\N &= 0,  \\
      Ax +  M y - b &=   0, & \qquad  z\B + r\B &= 0, & \qquad
          z\N + r\N &\ge 0,
\end{alignedat}
\end{equation}
for appropriate second-order consistent bases. For the dual method, the
purpose is to drive the primal variables to feasibility (i.e., by driving
the negative components of $x + q$ to zero).

An iteration begins with a base subiteration in which an index $l$ in the
basic set $\setB$ is identified such that $x_l+q_l < 0$.  The corresponding
dual variable $z_l$ may be increased from its current value $z_l = -r_l$ by
removing the index $l$ from $\setB$, and defining $\setB\gets \setB
\setminus \{l\}$. Once $l$ is removed from $\setB$, it holds that
$\setB\cup\{l\}\cup\setN = \{1$, $2$,\dots, $n\}$.  The resulting matrix
$K_l$ of (\ref{eqn-KB-Kl-defined}) is nonsingular, and the unique direction
$(\dx_l,\dx\B,\dy)$ may be computed with $\dz_l=1$ in
Proposition~\ref{prop-Klnonsing}.

If $\dx_l > 0$, the step $\alphastar = -(x_l+q_l)/\dx_l$ gives $x_l +
\alphastar \dx_l + q_l = 0$.  Otherwise, $\dx_l=0$ and
Proposition~\ref{propA-dirs} of the Appendix implies that the dual
objective function is linear and increasing along $(\dx,\dy,\dz)$. In this
case $\alphastar= +\infty$.  As $x_l + q_l$ is increased towards zero, some
nonbasic dual variables may decrease and the step must be limited
by
$\alphamax = \min_{ i \st \dz_i < 0} \sgap (z_i + r_i)(-\dz_i)$
to maintain feasibility of the nonbasic dual variables. This gives the step
$\alpha = \min\big(\alphastar,\alphamax\big)$. If $\alpha = +\infty$, the
dual problem is unbounded and the iteration is terminated. This is
equivalent to the primal problem $(\Primal_{q,r})$ being infeasible.  If
$\alpha=\alphastar$, then $x_l+ \alpha \dx_l+q_l=0$.  Otherwise, it must
hold that $\alpha = \alphamax$ and $\setN$ and $\setB$ are redefined as
$\setN=\setN\setminus \{k\}$ and $\setB=\setB\cup \{k\}$, where $k$ is the
index $k = \argmin_{ i \st \dz_i < 0}\mgap (z_i + r_i)/(-\dz_i)$.  The
partition at the new point satisfies $\setB\cup\{l\}\cup\setN = \{1$, $2$,
\dots, $n\}$.  Proposition~\ref{propA-dualnonsing} of the Appendix shows
that the new $K\B$ is nonsingular for both of the cases $\dx_l>0$ and
$\dx_l=0$.

If $x_l+q_l<0$ at the new point, then at least one intermediate
subiteration is necessary to drive $x_l + q_l$ to zero.  The nonsingularity
of $K\B$ implies that the search direction may be computed with $\dx_l=1$
in Proposition~\ref{prop-KBnonsing}. As in the base subiteration, the step
length is $\alphastar = -(x_l+q_l)/\dx_l$, but in this case $\alphastar$
can never be infinite because $\dx_l=1$.  If no constraint index is added
to $\setB$, then $x_l+\alpha \dx_l+q_l=0$. Otherwise, the index $k$ of a
blocking variable is moved from $\setN$ to $\setB$.
Proposition~\ref{propA-dualnonsing} of the Appendix implies that the
updated $K\B$ is nonsingular at the end of an intermediate
subiteration. Once $x_l + q_l$ is driven to zero, the index $l$ is moved to
$\setN$ and a new iteration is started.

\smallskip
The dual active-set method is summarized in Algorithm~\ref{alg-dualqp}
below. Its convergence properties are discussed in
Section~\ref{sec-relaxed-dual}.

\begin{algorithm}
\caption{\label{alg-dualqp}\sgap A dual active-set method for convex \QP\@.}
\begin{algorithmic}[0]
\State   Find $(x,y,z)$ satisfying conditions (\ref{eqn-dualxyz}) for some
              second-order consistent basis $\setB$;

         \While{$\exists\ l \st x_l + q_l<0$}
\State      $\setB \gets \setB \setminus \{l\}$;
\State      \Call{\algname{dual\_base}}{$\setB$, $\setN$, $l$, $x$, $y$, $z$};
                                     \Comment{Base subiteration}
            \While{$x_l+q_l<0$}
\State        \Call{\algname{dual\_intermediate}}{$\setB$, $\setN$, $l$, $x$, $y$, $z$};
                                     \Comment{Intermediate subiteration}
            \EndWhile
\State      $\setN\gets \setN\cup \{l\}$;
         \EndWhile
\end{algorithmic}
\begin{algorithmic}[0]
\Function{\algname{dual\_base}}{$\setB$, $\setN$, $l$, $x$, $y$, $z$}
\State         $\dz_l \gets 1$; \agap Solve
               $ \pmat{ h\lld & h\Bl^T & a_l^T \\
                        h\Bl  & H\BB   & A\B^T \\
                        a_l   & A\B    & -M}
                 \pmat{ \m \dx_l \\
                        \m \dx\B \\
                        -  \dy }
               = \pmat{ 1 \\
                        0 \\
                        0   }$;
            \Comment{$\dx_l \ge 0$}
\State         $\dz\N \gets h\Nld \dx\ld + H\BN^T \dx\Bd - A\N^T \dy$;
\State         $\alphastar \gets -(x_l+q_l)/\dx_l$;
               \Comment{$\alphastar \gets+\infty$ if $\dx_l = 0$}
\State         $\disp\alphamax \gets \min_{i \st \dz_i < 0}\mgap (z_i + r_i)/(-\dz_i)$; \agap
               $\disp k \gets \argmin_{ i \st \dz_i < 0}\mgap (z_i + r_i)/(-\dz_i)$;
\State         $\alpha \gets \min\big(\alphastar,\alphamax\big)$;
               \If{$\alpha = +\infty$}
\State             $\STOP$;           \Comment{$(\Primal_{q,r})$ is infeasible}
               \EndIf
\State         $x_l \gets x_l + \alpha \dx_l$; \agap
               $x\B \gets x\B + \alpha \dx\B$;
\State         $y   \gets y + \alpha \dy$; \agap
               $z_l   \gets z_l + \alpha \dz_l$; \agap
               $z\N   \gets z\N + \alpha \dz\N$;
               \If{$x_l+q_l<0$}
\State            $\setB \gets \setB \cup \{k\}$; \agap $\setN\gets \setN
\setminus \{k\}$;
               \EndIf
\State         \Return{$\setB$, $\setN$, $x$, $y$, $z$};
\EndFunction
\end{algorithmic}
\begin{algorithmic}[0]
\Function{\algname{dual\_intermediate}}{$\setB$, $\setN$, $l$, $x$, $y$, $z$}
\State         $\dx_l \gets 1$; \agap Solve
               $\pmat{  H\BB & A\B^T \\
                        A\B  & -M}
                \pmat{ \m \dx\B \\
                        - \dy     }
                = -\pmat{ h\Bl \\
                          a_l }$;
\State         $\dz_l \gets h\lld \dx\ld + h\Bl^T \dx\Bd - a_l^T \dy$;
            \Comment{$\dz_l \ge0$}
\State         $\dz\N \gets h\Nld \dx\ld + H\BN^T \dx\Bd - A\N^T \dy$;
\State         $\alphastar \gets -(x_l + q_l)$;
\State         $\disp \alphamax \gets \min_{ i \st \dz_i < 0}\mgap(z_i + r_i)/(-\dz_i)$; \agap
               $\disp k \gets \argmin_{ i \st \dz_i <   0}\mgap (z_i + r_i)/(-\dz_i);$
\State         $\alpha \gets \min\big(\alphastar,\alphamax\big)$;
\State         $x_l \gets x_l + \alpha \dx_l$; \agap
               $x\B \gets x\B + \alpha \dx\B$;
\State         $y   \gets y + \alpha \dy$;       \agap
               $z_l   \gets z_l + \alpha \dz_l$; \agap
               $z\N   \gets z\N + \alpha \dz\N$;
               \If{$x_l+q_l<0$}
\State            $\setB \gets \setB \cup \{k\}$; \agap $\setN\gets \setN
\setminus \{k\}$;
               \EndIf
\State         \Return{$\setB$, $\setN$, $x$, $y$, $z$};
\EndFunction
\end{algorithmic}
\end{algorithm}

\section{Combining Primal and Dual Active-Set Methods} \label{sec-primal-dual}
The primal active-set method proposed in Section~\ref{sec-primal} may be
used to solve $(\Primal_{q,r})$ for a given initial second-order consistent
basis satisfying the conditions (\ref{eqn-primalxyz}). An appropriate
initial point may be found by solving a conventional phase-1 linear
program.  Alternatively, the dual active-set method of
Section~\ref{sec-dual} may be used in conjunction with an appropriate
phase-1 procedure to solve the quadratic program $(\Primal_{q,r})$ for a
given initial second-order consistent basis satisfying the conditions
(\ref{eqn-dualxyz}).  In this section a method is proposed that provides an
alternative to the conventional phase-1/phase-2 approach. It is shown that
a pair of coupled quadratic programs may be created from the original by
simultaneously shifting the bound constraints.  Any second-order consistent
basis can be made optimal for such a primal-dual pair of shifted
problems. The shifts are then updated using the solution of either the
primal or the dual shifted problem. An obvious application of this approach
is to solve a shifted dual \QP{} to define an initial feasible point for
the primal, or \emph{vice-versa}.  This strategy provides an alternative to
the conventional phase-1/phase-2 approach that utilizes the \QP{} objective
function while finding a feasible point.

\subsection{Finding an initial second-order-consistent basis}\label{sec-SOC-basis}
For the methods described in Section~\ref{sec-initshift} below, it is
possible to define a simple procedure for finding the initial second-order
consistent basis $\setB$ such that the matrix $K\B$ of
(\ref{eqn-submin-KKT}) is nonsingular.  The required basis may be obtained
by finding a symmetric permutation $\Piit$ of the ``full'' KKT matrix $K$
such that
\begin{equation}  \label{eqn-KKTpart}
 \Piit^T K \Piit
  = \Piit^T\pmat{ H & \; A^T  \\
                  A & -M   }\Piit
 = \pmat{ H\BBd  & \;A\B^T & H\BNd \\
          A\B    & -M     & A\N   \\
          H\BN^T & \;A\N^T & H\NNd  },
\end{equation}
where the leading principal block $2\times 2$ submatrix is of the form
(\ref{eqn-submin-KKT}). The full row-rank assumption on $\tmat{A }{
  -M}$ ensures that the permutation (\ref{eqn-KKTpart}) is well
defined, see~\cite[Section 6]{For02}. In practice, the permutation may
be determined using any method for finding a symmetric indefinite
factorization of $K$, see, e.g., \cite{BunP71,Fle76,BunK77}. Such
methods use symmetric interchanges that implicitly form the
nonsingular matrix $K\B$ by deferring singular pivots. In this case,
$K\B$ may be defined as any submatrix of the largest nonsingular
principal submatrix obtained by the factorization.  (There may be
further permutations within $\Piit$ that are not relevant to this
discussion; for further details, see, e.g.,
\cite{FOrM93,For02,DufR83,Duf04}.) The permutation $\Piit$ defines the
initial $\setB$-$\setN$ partition of the columns of $A$, i.e., it defines
an initial second-order consistent basis.

\subsection{Initializing the shifts}\label{sec-initshift}
Given a second-order consistent basis, it is straightforward to create
shifts $(q\up0,r\up0)$ and corresponding $(x,y,z)$ so that $q\up0\ge0$,
$r\up0\ge 0$ and $(x,y,z)$ are optimal for $(\Primal_{q\up0,r\up0})$ and
$(\Dual_{q\up0,r\up0})$. First, choose nonnegative vectors $q\up0\N$ and
$r\up0\B$.  (Obvious choices are $q\up0\N = 0$ and $r\up0\B = 0$.) Define
$z\B = -r\up0\B$, $x\N = -q\up0\N$, and solve the nonsingular \KKT-system
(\ref{eqn-xBy}) to obtain $x\B$ and $y$, and compute $z\N$ from
(\ref{eqn-zN}). Finally, let $q\up0\B\ge \max\{-x\B,0\}$ and $r\up0\N \ge
\max\{-z\N,0\}$. Then, it follows from Proposition~\ref{prop-regQPopt} that
$x$, $y$ and $z$ are optimal for the problems $(\Primal_{q\up0,r\up0})$ and
$(\Dual_{q\up0,r\up0})$, with $q\up0\ge 0$ and $r\up0\ge 0$. If $q\up0$ and
$r\up0$ are zero, then $x$, $y$ and $z$ are optimal for the original
problem.

\subsection{Solving the original problem by removing the shifts}
The original problem may now be solved as a pair of shifted quadratic
programs. Two alternative strategies are proposed.  The first is a ``primal
first'' strategy in which a shifted primal quadratic program is solved,
followed by a dual. The second is an analogous ``dual first'' strategy.

The ``primal-first'' strategy is summarized as follows.
\begin{enumerate}
 \item[(0)] Find $\setB$, $\setN$, $q\up0$, $r\up0$, $x$, $y$, $z$, as described in
   Sections~\ref{sec-SOC-basis} and \ref{sec-initshift}.

 \item[(1)] Set $q\up1 = q\up0$, $r\up1=0$. Solve $(\Primal_{q,0})$ using
  the primal active-set method.

 \item[(2)] Set $q\up2 = 0$, $r\up2=0$. Solve $(\Dual_{0,0})$ using the
  dual active-set method.
\end{enumerate}
In steps (1) and (2), the initial $\setB$--$\setN$ partition and
initial values of $x$, $y$, and $z$ are defined as the final
$\setB$--$\setN$ partition and final values of $x$,
$y$, and $z$ from the preceding step.

The ``dual-first'' strategy is defined in an analogous way.
\begin{enumerate}
 \item[(0)] Find $\setB$, $\setN$, $q\up0$, $r\up0$, $x$, $y$, $z$, as described in
   Section~\ref{sec-SOC-basis} and \ref{sec-initshift}.

\item[(1)] Set $q\up1=0$, $r\up1 = r\up0$. Solve $(\Dual_{0,r})$ using the dual active-set
  method.

\item[(2)] Set $q\up2=0$, $r\up2=0$. Solve $(\Primal_{0,0})$ using the primal active-set
  method.
\end{enumerate}
As in the ``primal-first'' strategy, the initial $\setB$--$\setN$ partition
and initial values of $x$, $y$, and $z$ for steps~(1) and (2), are defined
as the final $\setB$--$\setN$ partition and final values of $x$, $y$, and
$z$ from the preceding step.

(The strategies of solving two consecutive quadratic programs may be
generalized to a sequence of more than two quadratic programs, where
we alternate between primal and dual active-set methods, and eliminate
the shifts in more than two steps.)

In order for these approaches to be well-defined, a simple generalization
of the primal and dual active-set methods of Algorithms~\ref{alg-primalqp}
and \ref{alg-dualqp} is required.

\subsection{Relaxed initial conditions for the primal QP method.}\label{sec-relaxed-primal}
For Algorithm~\ref{alg-primalqp}, the initial values of $\setB$,
$\setN$, $q$, $r$, $x$, $y$, and $z$ must satisfy conditions
(\ref{eqn-primalxyz}). However, the choice of $r = r\up2 = 0$ in
Step~(2) of the dual-first strategy may give some negative components
in the vector $z\B + r\B$.  This possibility may be handled by
defining a simple generalization of Algorithm~\ref{alg-primalqp} that
allows initial points satisfying the conditions
\begin{equation}\label{eqn-primalxyzForShifts}
\begin{alignedat}{3}
  H x + c - A\T y - z &= 0, & \qquad x\N + q\N &=   0, & \qquad x\B + q\B  &\ge 0, \\
       Ax +   M y - b &= 0, & \qquad z\B + r\B &\le 0,
\end{alignedat}
\end{equation}
instead of the conditions (\ref{eqn-primalxyz}). In
Algorithm~\ref{alg-primalqp}, the index $l$ identified at the start of the
primal base subiteration is selected from the set of nonbasic indices such
that $z_j + r_j < 0$.  In the generalized algorithm, the set of eligible
indices for $l$ is extended to include indices associated with negative
values of $z\B + r\B$.  If the index $l$ is deleted from $\setB$, the
associated matrix $K_l$ is nonsingular, and intermediate subiterations are
executed until the updated value satisfies $z_l+r_l=0$. At this point, the
index $l$ is returned $\setB$.  The method is summarized in
Algorithm~\ref{alg-primalqpForShifts}.

\begin{algorithm}
\caption{\label{alg-primalqpForShifts}\sgap A primal active-set method for convex \QP\@.}
\begin{algorithmic}[0]
  \State Find $(x,y,z)$ satisfying conditions
  (\ref{eqn-primalxyzForShifts}) for some second-order consistent basis
  $\setB$;

         \While{$\exists\ l \st z_l+r_l<0$}
            \If{$l \in \setN$}
  \State         $\setN\gets \setN\setminus \{l\}$;
  \State         \Call{\algname{primal\_base}}{$\setB$, $\setN$, $l$, $x$, $y$, $z$};
                                     \Comment{returns $\setB$, $\setN$, $x$, $y$, $z$}
            \Else
  \State         $\setB \gets \setB \setminus \{l\}$;
            \EndIf
            \While{$z_l + r_l < 0$}
  \State        \Call{\algname{primal\_intermediate}}{$\setB$, $\setN$, $l$, $x$, $y$, $z$};
                                     \Comment{returns $\setB$, $\setN$, $x$, $y$, $z$}
            \EndWhile
  \State    $\setB\gets \setB\cup \{l\}$;
         \EndWhile
\end{algorithmic}
\end{algorithm}

This section concludes with a convergence result for the primal method of
Algorithm~\ref{alg-primalqpForShifts}.  In particular, it is shown that the
algorithm is well-defined, and terminates in a finite number of iterations
if $(\Primal_{q,r})$ is \emph{nondegenerate}.
We define nondegeneracy to mean that a nonzero step in the $x$-variables is
taken at each iteration of Algorithm~\ref{alg-primalqpForShifts} that
involves a base subiteration.
A sufficient
condition on $(\Primal_{q,r})$ for this to hold is that the gradients of
the equality constraints and active bound constraints are linearly
independent at each iterate.
See, e.g., Fletcher~\cite{Fle93}
for further discussion of these issues.
As the active-set strategy uses the same criteria for adding and deleting
variables as those used in the simplex method, standard pivot selection
rules used to avoid cycling in linear programming, such as lexicographical
ordering, least-index selection or perturbation may be applied directly to
the method proposed here (see, e.g., Bland~\cite{Bla77},
Charnes~\cite{Cha52}, Dantzig, Orden and Wolfe~\cite{DanOW55}, and
Harris~\cite{Har73}).

\begin{theorem}\label{thm-primalConvergenceWithShifts}
 Given a primal-feasible point $(x_0,y_0,z_0)$ satisfying conditions
 {\rm(\ref{eqn-primalxyzForShifts})} for a second-order consistent basis
 $\setB_0$, then {\rm Algorithm~\ref{alg-primalqpForShifts}} generates a
 sequence of second-order consistent bases $\Seq{\setB_j}_{j>0}$. Moreover,
 if problem {\rm$(\Primal_{q,r})$} is nondegenerate, then {\rm
   Algorithm~\ref{alg-primalqpForShifts}} finds a solution of {\rm
   $(\Primal_{q,r})$} or determines that {\rm$(\Dual_{q,r})$} is infeasible
 in a finite number of iterations.
\end{theorem}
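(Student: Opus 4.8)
The plan is to prove the two assertions separately: the statement that every iteration produces a second-order consistent basis is structural and needs no nondegeneracy assumption, whereas finiteness does. For the structural claim I would argue by induction over the outer iterations and, inside each, over its subiterations, carrying the invariant that the current basis is second-order consistent and $(x,y,z)$ satisfies (\ref{eqn-primalxyzForShifts}). When the selected index $l$ lies in $\setN$, deleting it leaves the second-order consistent set $\setB$ unchanged, so $K\B$ is nonsingular and Proposition~\ref{prop-KBnonsing} furnishes the unique base direction with $\dx_l=1$; when $l$ lies in $\setB$, deleting it produces the partition $\setB\cup\{l\}\cup\setN$ whose matrix $K_l$ is precisely the nonsingular $K\B$ of the entering basis, so Proposition~\ref{prop-Klnonsing} furnishes the unique direction with $\dz_l=1$. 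Every subsequent intermediate subiteration then solves a nonsingular $K_l$-system by Proposition~\ref{prop-Klnonsing}, and each time a blocking index $k$ is moved from $\setB$ to $\setN$ the updated $K_l$ stays nonsingular by Proposition~\ref{propA-primalnonsing}. Since no index is ever added to $\setB$ inside an iteration, $|\setB|$ strictly decreases at every subiteration that does not terminate the inner loop, so each outer iteration performs finitely many subiterations and ends with $z_l+r_l=0$ and $K_l$ nonsingular; restoring $l$ to $\setB$ gives a second-order consistent $\setB_{j+1}$ with $K_{\setB_{j+1}}=K_l$, and checking the five relations shows (\ref{eqn-primalxyzForShifts}) holds again, which closes the induction and proves the first assertion.

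I would then identify the two exits. If \algname{primal\_base} stops with $\alpha=+\infty$, then $\dz_l=0$ and $\dx_i\ge0$ for every $i\in\setB$, so the direction with $\dx_l=1$ is primal-feasible of unbounded length and, by Proposition~\ref{propA-dirs}, the primal objective is linear and strictly decreasing along it; thus $(\Primal_{q,r})$ is unbounded and Proposition~\ref{prop-regQPopt} gives that $(\Dual_{q,r})$ is infeasible. If instead the main loop exits, then $z+r\ge0$; combined with the invariant relation $z\B+r\B\le0$ this forces $z\B+r\B=0$, and together with $x\N+q\N=0$, $x\B+q\B\ge0$, the two equalities of (\ref{eqn-primalxyzForShifts}), and $z\B+r\B=0$ together with $x\N+q\N=0$ implying $(x+q)\T(z+r)=0$, all of (\ref{eqn-optcond}) hold, so Proposition~\ref{prop-regQPopt} shows $(x,y,z)$ is optimal for $(\Primal_{q,r})$.

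For finiteness, set $\setB_j^{-}=\Set{i\in\setB_j \st z_i+r_i<0}$ and $p_j=|\setB_j^{-}|$. Because no index enters $\setB$ during the subiterations and the index $l$ returned to $\setB$ at the end of an iteration has $z_l+r_l=0$, one has $\setB_{j+1}^{-}\subseteq\setB_j^{-}$, with strict inclusion whenever the iteration began from an index $l\in\setB$; hence $p_j$ is nonincreasing and can decrease at most $p_0\le n$ times. Between consecutive decreases of $p_j$ the set $\setB_j^{-}$ and the associated values $z_i+r_i$, $i\in\setB_j^{-}$, are constant, and every iteration of such a block begins with $l\in\setN$ and therefore executes a base subiteration. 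On the block, define $\widehat\phi(x,y)=\half x\T Hx+\half y\T My+(c+r)\T x-\sum_{i\in\setB^{-}}(z_i+r_i)(x_i+q_i)$ with $\setB^{-}$ and the multipliers $z_i+r_i$ frozen at their block values. Using $Hx+c-A\T y-z=0$, the identity $A\dx+M\dy=0$ of (\ref{eqn-dxdydz}), $\dx\N=0$, and $z_i+r_i=0$ for $i\in\setB\setminus\setB^{-}$, the directional derivative of $\widehat\phi$ along any search direction produced on the block equals $(z_l+r_l)\dx_l\le0$; since the curvature of $\widehat\phi$ along that direction is $\dx\T H\dx+\dy\T M\dy=\dx_l\dz_l\ge0$ by Proposition~\ref{prop-dotproduct} and the step obeys $\alpha\le\alphastar=-(z_l+r_l)/\dz_l$ (with $\alphastar=+\infty$ when $\dz_l=0$), the new iterate lies on the nonincreasing branch of the one-dimensional quadratic, so $\widehat\phi$ never increases over a subiteration. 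In a base subiteration $\dx_l=1$ and $z_l+r_l<0$ make the directional derivative strictly negative, and nondegeneracy forces $\alpha>0$, so $\widehat\phi$ strictly decreases at each iteration of the block. Since there are only finitely many partitions of $\setI$, hence finitely many iterates $(x,y,z)$, and $\widehat\phi$ is a function of $(x,y)$ that strictly decreases along a block, no iterate can recur within a block; each block therefore contains finitely many iterations, and with at most $n$ intervening decreases of $p_j$ the total number of outer iterations is finite.

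I expect the finiteness argument to be the crux, and within it the choice of $\widehat\phi$. The relaxed initial conditions (\ref{eqn-primalxyzForShifts}) allow $z\B+r\B$ to have strictly negative components, so the plain primal objective of $(\Primal_{q,r})$ need not be monotone; one must isolate the dual-infeasible basic indices by means of the monotone counter $p_j$ and verify that the frozen-multiplier correction restores a strictly decreasing potential on each block. The remaining ingredients --- the case analysis for second-order consistency and the check that $\alpha\le\alphastar$ keeps the new iterate on the descending branch of the quadratic --- are routine consequences of Propositions~\ref{prop-dotproduct}, \ref{prop-KBnonsing}, \ref{prop-Klnonsing}, \ref{propA-primalnonsing}, \ref{propA-dirs} and \ref{prop-regQPopt}.
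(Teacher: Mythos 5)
Your proposal is correct and follows essentially the same route as the paper: your violated set $\setB_j^{-}$ and counter $p_j$ are the paper's $\setBviolated$ and its cardinality, and your frozen-multiplier potential $\widehat\phi$ coincides (up to the additive constant $-\sum_{i\in\setB^{-}}(z_i+r_i)q_i$) with the objective of the paper's modified problem $(\Primal_{q,\rBviolated})$ with shift $\rBviolated_i=-z_i$ on $\setB^{-}$, so the block-wise strict-decrease-over-finitely-many-stationary-points argument is the same. The only cosmetic differences are that the paper phrases the infeasibility exit via containment of the feasible region of $(\Dual_{q,r})$ in that of $(\Dual_{q,\rBviolated})$ rather than by exhibiting an unbounded primal ray directly, and locates the strictly decreasing subiteration wherever $\alpha\,\dx\ne 0$ occurs rather than insisting it be the base subiteration; neither affects correctness.
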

\begin{proof}
Assume that $(x,y,z)$ satisfies the conditions
(\ref{eqn-primalxyzForShifts}) for the second-order consistent basis
$\setB$. Propositions~\ref{prop-KBnonsing} and \ref{prop-Klnonsing} imply
that the \KKT{} matrices associated with subsequent base and intermediate
iterations are nonsingular, in which case each basis is second-order
consistent.  Let $\setBviolated$ denote the index set $\setBviolated=\{ i
\in\setB : z_i + r_i < 0\}$, and let $\rBviolated$ be the vector
$\rBviolated_i=r_i$, $i\not \in \setBviolated$, and $\rBviolated_i=-z_i$,
$i\in \setBviolated$. These definitions imply that $\rBviolated_i=-z_i>
-z_i + z_i + r_i = r_i$, for every $i\in \setBviolated$.  It follows that
$\rBviolated \ge r$, and the feasible region of $(\Dual_{q,r})$ is a subset
of the feasible region of $(\Dual_{q,\rBviolated})$.  In addition, if $r$
is replaced by $\rBviolated$ in (\ref{eqn-primalxyz}), the only difference
is that $z\B+\rBviolated\B=0$, i.e., the initial point for
(\ref{eqn-primalxyzForShifts}) is a stationary point with respect to
$(\Primal_{q,\rBviolated})$.

The first step of the proof is to show that after a finite number of
iterations of Algorithm~\ref{alg-primalqpForShifts}, one of three possible
events must occur: (i) the cardinality of the set $\setBviolated$ is
decreased by at least one; (ii) a solution of problem $(\Primal_{q,r})$ is
found; or (iii) $(\Dual_{q,r})$ is declared infeasible.  The proof will
also establish that if (i) does not occur, then either (ii) or (iii) must
hold after a finite number of iterations.

Assume that (i) never occurs. This implies that the index $l$ selected
in the base subiteration can never be an index in $\setBviolated$
because at the end of such an iteration, it would belong to $\setB$
with $z_l+r_l=0$, contradicting the assumption that the cardinality of
$\setBviolated$ never decreases. For the same reason, it must hold
that $k\not\in \setBviolated$ for every index $k$ selected to be moved
from $\setB$ to $\setN$ in any subiteration, because an index can only
be moved from $\setN$ to $\setB$ by being selected in the base
subiteration.  These arguments imply that $z_i=-\rBviolated_i$, with
$i\in\setBviolated$, throughout the iterations. It follows that the
iterates may be interpreted as being members of a sequence constructed
for solving $(\Primal_{q,\rBviolated})$ with a fixed $\rBviolated$,
where the initial stationary point is given, and each iteration gives
a new stationary point.
The nondegeneracy assumption implies that $\alpha \Deltait x\ne 0$ for
at least one subiteration. For the base subiteration, $\Deltait x_l >
0$, and it follows from Proposition~\ref{prop-Klnonsing} that
$\Deltait x\ne 0$ if and only if $\Deltait x_l>0$ for an intermediate
subiteration.  Therefore, Proposition~\ref{propA-dirs} shows that the
objective value of $(\Primal_{q,\rBviolated})$ is strictly decreasing
for a subiteration where $\alpha \Deltait x\ne 0$. In addition, the
objective value of $(\Primal_{q,\rBviolated})$ is
nonincreasing at each subiteration, so a strict overall improvement of the
objective value of $(\Primal_{q,\rBviolated})$ is obtained at each
iteration.
As there are only a finite number of stationary points,
Algorithm~\ref{alg-primalqpForShifts} either solves
$(\Primal_{q,\rBviolated})$ or concludes that
$(\Dual_{q,\rBviolated})$ is infeasible after a finite number of
iterations. If $(\Primal_{q,\rBviolated})$ is solved, then $z\N+r\N\ge
0$, because $\rBviolated_j = r_j$ for $j\in\setN$.  Hence,
Algorithm~\ref{alg-primalqpForShifts} can not proceed further by
selecting an $l\in\setN$, and the only way to reduce the objective is
to select an $l$ in $\setB$ such that $z_j + r_j < 0$.  Under the
assumption that (i) does not occur, it must hold that no eligible
indices exist and $\setBviolated=\emptyset$. However, in this case
$(\Primal_{q,r})$ has been solved with $\rBviolated=r$, and (ii) must
hold.  If Algorithm~\ref{alg-primalqpForShifts} declares
$(\Dual_{q,\rBviolated})$ to be infeasible, then $(\Dual_{q,r})$ must
also be infeasible because the feasible region of $(\Dual_{q,r})$ is
contained in the feasible region of $(\Dual_{q,\rBviolated})$.  In
this case $(\Dual_{q,r})$ is infeasible and (iii) occurs.

Finally, if (i) occurs, there is an iteration at which the cardinality of
$\setBviolated$ decreases and an index is removed from
$\setBviolated$. There may be more than one such index, but there is at
least one $l$ moved from $\setBviolated$ to $\setB\backslash\setBviolated$,
or one $k$ moved from $\setBviolated$ to $\setN$.  In either case, the
cardinality of $\setBviolated$ is decreased by at least one. After such an
iteration, the argument given above may be repeated for the new set
$\setBviolated$ and new shift $\rBviolated$.  Applying this argument
repeatedly gives the result that the situation (i) can occur only a finite
number of times.

It follows that (ii) or (iii) must occur after a finite number of
iterations, which is the required result.
\end{proof}

\subsection{Relaxed initial conditions for the dual QP method.}\label{sec-relaxed-dual}
Analogous to the primal case, the choice of $q = q\up2 = 0$ in Step~(2) of
the primal-first strategy may give some negative components in the vector
$x\N + q\N$. In this case, the conditions (\ref{eqn-dualxyz}) on the
initial values of $\setB$, $\setN$, $q$, $r$, $x$, $y$, and $z$ are relaxed
so that
\begin{equation}\label{eqn-dualxyzForShifts}
\begin{alignedat}{3}
 H x + c - A\T y - z &= 0, &\qquad x\N + q\N &\le 0, \\
      Ax +   M y - b &= 0, &\qquad z\B + r\B & =  0, &\qquad z\N + r\N &\ge 0.
\end{alignedat}
\end{equation}
Similarly, the set of eligible indices may be extended to include indices
associated with negative values of $x\N + q\N$.  If the index $l$ is from
$\setN$, the associated matrix $K\B$ is nonsingular, and intermediate
subiterations are executed until the updated value satisfies
$x_l+q_l=0$. At this point, the index $l$ is returned $\setN$.  The method
is summarized in Algorithm~\ref{alg-dualqpForShifts}.

\begin{algorithm}
\caption{\label{alg-dualqpForShifts}\sgap A dual active-set method for convex \QP\@.}
\begin{algorithmic}[0]
   \State   Find $(x,y,z)$ satisfying conditions (\ref{eqn-dualxyzForShifts}) for some
                 second-order consistent $\setB$;
            \While{$\exists\ l \st x_l + q_l<0$}
               \If{$l \in \setB$}
   \State         $\setB \gets \setB \setminus \{l\}$;
   \State         \Call{\algname{dual\_base}}{$\setB$, $\setN$, $l$, $x$, $y$, $z$};
                                        \Comment{Base subiteration}
               \Else
   \State         $\setN \gets \setN \setminus \{l\}$;
               \EndIf
               \While{$x_l+q_l<0$}
   \State         \Call{\algname{dual\_intermediate}}{$\setB$, $\setN$, $l$, $x$, $y$, $z$};
                                        \Comment{Intermediate subiteration}
               \EndWhile
   \State      $\setN\gets \setN\cup \{l\}$;
            \EndWhile
\end{algorithmic}
\end{algorithm}

A convergence result analogous to
Theorem~\ref{thm-primalConvergenceWithShifts} holds for the dual algorithm.
In this case, the nondegeneracy assumption concerns the linear independence
of the gradients of the equality constraints and active bounds for
$(\Dual_{q,r})$.

\begin{theorem}\label{thm-dualConvergenceWithShifts}
 Given a dual-feasible point $(x_0,y_0,z_0)$ satisfying conditions
 {\rm(\ref{eqn-dualxyzForShifts})} for a second-order consistent basis
 $\setB_0$, then {\rm Algorithm~\ref{alg-dualqpForShifts}} generates a
 sequence of second-order consistent bases $\Seq{ \setB_j
 }_{j>0}$. Moreover, if problem {\rm$(\Dual_{q,r})$} is non\-degenerate,
 then {\rm Algorithm~\ref{alg-dualqpForShifts}} either solves {\rm$(\Dual_{q,r})$}
 or concludes that {\rm$(\Primal_{q,r})$} is infeasible in a finite number
 of iterations.
\end{theorem}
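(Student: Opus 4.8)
The plan is to prove Theorem~\ref{thm-dualConvergenceWithShifts} by mirroring the structure of the proof of Theorem~\ref{thm-primalConvergenceWithShifts}, exploiting the primal--dual symmetry that Propositions~\ref{prop-KBnonsing} and~\ref{prop-Klnonsing} were designed to expose. First I would establish that every basis generated is second-order consistent: starting from a $\setB$ with $K\B$ nonsingular satisfying~(\ref{eqn-dualxyzForShifts}), removing an index $l\in\setB$ (or $l\in\setN$) produces a nonsingular $K_l$ (resp.\ $K\B$) by Proposition~\ref{prop-Klnonsing} (resp.\ Proposition~\ref{prop-KBnonsing}), and the update rules of \algname{dual\_base} and \algname{dual\_intermediate} preserve nonsingularity of the relevant KKT matrix by the same two propositions; Proposition~\ref{propA-dualnonsing} of the Appendix guarantees the updated $K\B$ is nonsingular after a blocking move. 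Hence $\Seq{\setB_j}_{j>0}$ is a sequence of second-order consistent bases.

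For the finite-termination claim, I would introduce the analogue of $\setBviolated$: let $\setNtilde = \{\, i\in\setN : x_i + q_i < 0 \,\}$ be the set of currently ``relaxed'' nonbasic indices, and define a modified shift vector $\qtilde$ by $\qtilde_i = q_i$ for $i\notin\setNtilde$ and $\qtilde_i = -x_i$ for $i\in\setNtilde$. Then $\qtilde \ge q$, so the feasible region of $(\Primal_{q,r})$ is contained in that of $(\Primal_{\qtilde,r})$, and with $q$ replaced by $\qtilde$ the current iterate satisfies the unrelaxed dual conditions~(\ref{eqn-dualxyz}): it is a stationary point of $(\Dual_{\qtilde,r})$. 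The argument then splits into the same three outcomes: (i) the cardinality of $\setNtilde$ decreases by at least one; (ii) $(\Dual_{q,r})$ is solved; or (iii) $(\Primal_{q,r})$ is declared infeasible. Assuming (i) never occurs, the index $l$ chosen in a base subiteration cannot lie in $\setNtilde$ (it would return to $\setN$ with $x_l+q_l=0$, decreasing $|\setNtilde|$), and likewise no $k$ moved from $\setN$ to $\setB$ can lie in $\setNtilde$; so $x_i = -\qtilde_i$ is frozen for $i\in\setNtilde$, and the iterates form a legitimate run of Algorithm~\ref{alg-dualqp} on the fixed-shift problem $(\Dual_{\qtilde,r})$. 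Under the nondegeneracy hypothesis for $(\Dual_{q,r})$, a nonzero step $\alpha\,\dz$ (equivalently, by Proposition~\ref{prop-Klnonsing} or~\ref{prop-KBnonsing}, $\dz_l>0$) is taken in at least one subiteration of each iteration, and Proposition~\ref{propA-dirs} then shows the dual objective of $(\Dual_{\qtilde,r})$ strictly increases over each iteration while being nondecreasing across subiterations; since there are finitely many stationary points, the algorithm either solves $(\Dual_{\qtilde,r})$ or reports $(\Primal_{\qtilde,r})$ infeasible in finitely many iterations. In the first case $x\B + q\B \ge 0$ holds and $\qtilde_i = q_i$ for $i\in\setB$, so no eligible $l$ with $x_l+q_l<0$ remains; since (i) is assumed not to occur, necessarily $\setNtilde=\emptyset$, so $\qtilde = q$ and $(\Dual_{q,r})$ itself is solved — outcome (ii). In the second case, infeasibility of $(\Primal_{\qtilde,r})$ forces infeasibility of $(\Primal_{q,r})$ by the containment of feasible regions — outcome (iii). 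Finally, if (i) does occur, at that iteration at least one index leaves $\setNtilde$ (moved from $\setNtilde$ to $\setN\setminus\setNtilde$, or from $\setNtilde$ to $\setB$), strictly decreasing $|\setNtilde|$; restarting the argument with the smaller $\setNtilde$ and updated $\qtilde$, and noting $|\setNtilde|$ can decrease only finitely many times, gives that (ii) or (iii) must occur after finitely many iterations, as required.

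The step I expect to require the most care is verifying that, when (i) fails, the sequence of iterates is genuinely a valid execution of the fixed-shift dual algorithm — in particular that the freezing of $x_i$ at $-\qtilde_i$ for $i\in\setNtilde$ is consistent with the update formulas (the directions computed in \algname{dual\_base}/\algname{dual\_intermediate} leave those components untouched since $\dx_i=0$ for $i\in\setN$ by~(\ref{eqn-dxdydz})) and that the dual objective comparison via Proposition~\ref{propA-dirs} applies verbatim with $\qtilde$ in place of $q$. The monotonicity bookkeeping — strict increase per iteration versus mere nonincrease per subiteration — is the delicate point, exactly as in the primal proof, and it is what ties the nondegeneracy assumption to finite termination.
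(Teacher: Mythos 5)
Your proposal is correct and follows exactly the route the paper intends: the paper's own proof of Theorem~\ref{thm-dualConvergenceWithShifts} consists of the single remark that it ``mirrors'' the proof of Theorem~\ref{thm-primalConvergenceWithShifts}, and what you have written is precisely that mirror, carried out correctly --- the set $\setNtilde$ and shift $\qtilde$ are the right analogues of $\setBviolated$ and $\rBviolated$, the containment of the feasible region of $(\Primal_{q,r})$ in that of $(\Primal_{\qtilde,r})$ plays the role of the dual containment, and the strict increase of the dual objective via Proposition~\ref{propA-dirs} under nondegeneracy replaces the strict decrease of the primal objective. No gaps; your flagged concern about the frozen components $x_i=-\qtilde_i$ for $i\in\setNtilde$ is resolved exactly as you suggest, since $\dx\N=0$ in every subiteration.
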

\begin{proof}
The proof mirrors that of Theorem~\ref{thm-primalConvergenceWithShifts} for
the primal method.
\end{proof}

\section{Practical Issues}\label{sec-general-format}

As stated, the primal quadratic program has lower bound zero on the
$x$-variables. This is for notational convenience. This form may be
generalized in a straightforward manner to a form where the
$x$-variables has both lower and upper bounds on the primal variables,
i.e., $b\subL \le x \le b\U$, where components of $b\subL$ can  be
$-\infty$ and components of $b\U$ can be $+\infty$. Given
primal shifts $q\subL$ and $q\U$, and dual shifts $r\subL$ and $r\U$, we
have the primal-dual pair
\begin{displaymath}
(\Primal_{q,r})\sgap
 \begin{array}{ll}
   \minimize{x,y}  &\m\half x\T H x  + \half y\T  M y + c\T x + (r\subL - r\U)\T x\\
   \subject        &\mbl{- Hx + A\T y  + z\subL - z\U = c}{\m Ax +  M y = b,}
                       \mgap\bgap b\subL - q\subL \le x \le b\U + q\U,
 \end{array}
\]
and
\[
(\Dual_{q,r}) \sgap
 \begin{array}{ll}
   \maximize{x,y,z\subL,z\U}&-\half x\T H x - \half y\T  M y + b\T y + (b\subL - q\subL)\T z\subL - (b\U + q\U)\T z\U\\
   \subject              & \mbl{- Hx + A\T y  + z\subL - z\U = c}{- Hx + A\T y  + z\subL - z\U = c,}
                       \mgap\bgap z\subL \ge -r\subL, \bgap z\U \ge -r\U.
  \end{array}
\end{displaymath}
An infinite bound has neither a shift nor a corresponding dual
variable. For example, if the $j$th components of $b\subL$ and $b\U$ are infinite,
then the corresponding variable $x_j$ is free.
In the procedure given in
Section~\ref{sec-SOC-basis} for finding the first second-order consistent
basis $\setB$, it is assumed that variables with indices not selected for
$\setB$ are initialized at one of their bounds.  As a free variable has no
finite bounds, any index $j$ associated with a free variable should be
selected for $\setB$.  However, this cannot be guaranteed in practice, and
in the next section it is shown that the primal and dual \QP{} methods may
be extended to allow a free variable to be fixed temporarily at some value.

If the \QP{} is defined in the general problem format of
Section~\ref{sec-general-format}, then any free variable not selected for
$\setB$ has no upper or lower bound and must be temporarily fixed at some
value $x_j = \xbar_j$ (say). The treatment of such ``temporary bounds''
involves some additional modifications to the primal and dual methods of
Sections~\ref{sec-relaxed-primal} and \ref{sec-relaxed-dual}.

Each temporary bound $x_j = \xbar_j$ defines an associated dual
variable $z_j$ with initial value $\zbar_j$. As the bound is
temporary, it is treated as an equality constraint, and the desired
value of $z_j$ is zero. Initially, an index $j$ corresponding to a
temporary bound is assigned a primal shift $q_j=0$ and a dual shift
$r_j=-\zbar_j$, making $\xbar_j$ and $\zbar_j$ feasible for the
shifted problem.  In both the primal-first and dual-first approaches,
the idea is to drive the $z_j$-variables associated with temporary
bounds to zero in the primal and leave them unchanged in the dual.

In a primal problem, regardless of whether it is solved before or
after the dual problem, an index $j$ corresponding to a temporary
bound for which $z_j\ne 0$ is considered eligible for selection as $l$
in the base subiteration, i.e., the index can be selected regardless
of the sign of $z_j$. Once selected, $z_j$ is driven to zero and $j$
belongs to $\setB$ after such an iteration. In addition, as $x_j$
has no finite bounds, $j$ will remain in $\setB$ throughout the
iterations. Hence, at termination of a primal problem, any index $j$
corresponding to a temporarily bounded variable must have $z_j=0$. If
the maximum step length at a base subiteration is infinite, the dual
problem is infeasible, as in the case of a regular bound.

In a dual problem, the dual method is modified so that the dual
variables associated with temporary bounds remain fixed throughout the
iterations. At any subiteration, if it holds that $\dz_j\ne 0$ for
some temporary bound, then no step is taken and one such index $j$ is
moved from $\setN$ to $\setB$. Consequently, a move is made only if
$\dz_j= 0$ for every temporary bound $j$.  It follows that the dual
variables for the temporary bounds will remain unaltered throughout
the dual iterations. Note that an index $j$ corresponding to a
temporary bound is moved from $\setN$ to $\setB$ at most once, and is
never moved back because the corresponding $x_j$-variable has no finite
bounds. If the maximum step length at a base subiteration is
infinite, it must hold that $\dz_j=0$ for all temporary bounds $j$,
and the primal problem is infeasible.

The discussion above implies that a pair of primal and dual problems solved
consecutively will terminate with $z_j=0$ for all indices $j$ associated
with temporary bounds. This is because $z_j$ is unchanged in
the dual problem and driven to zero in the primal problem.

\section{Numerical Examples} \label{sec-numerical-results}
This section concerns a particular formulation of the combined primal-dual
method of Section~\ref{sec-primal-dual} in which either a ``primal-first''
or ``dual-first'' strategy is selected based on the initial point.  In
particular, if the point is dual feasible, then the ``dual-first'' strategy
is used, otherwise, the ``primal-first'' strategy is selected.  Some
numerical experiments are presented for a simple \Matlab{} implementation
applied to a set of convex problems from the \CUTEst{} test collection (see
Bongartz, \etal{}\cite{BonCGT95}, and Gould, Orban and Toint~\cite{GouOT03,GouOT15}).

\subsection{The test problems} \label{sec-test-problems}
Each \QP{} problem in the \CUTEst{} test set may be written in the form
\[
  \minimize{x} \mgap \half x\T \Hhat x + c\T x \bgap
  \subject     \mgap \ell \le \pmat{ x \\ \Ahat x } \le u,
\]
where $\ell$ and $u$ are constant vectors of lower and upper bounds, and
$\Ahat$ has dimension $m\times n$.  In this format, a fixed variable or
equality constraint has the same value for its upper and lower bound.  Each
problem was converted to the equivalent form
\begin{equation}\label{eqn-QP-standard}
  \minimize{x,s} \mgap  \half x\T \Hhat x + c\T x \bgap
  \subject       \mgap \Ahat x - s = 0,
                 \bgap \ell \le \pmat{ x \\ s } \le u,
\end{equation}
where $s$ is a vector of slack variables. With this formulation, the \QP{}
problem involves simple upper and lower bounds instead of nonnegativity
constraints.  It follows that the matrix $M$ is zero, but the full row-rank
assumption on the constraint matrix is satisfied because the constraint
matrix $A$ takes the form $\tmat{\Ahat}{-I}$ and has rank $m$.

Numerical results were obtained for a set of 121 convex \QP\@s in standard
interface format (SIF)\@.  The problems were selected based on the
dimension of the constraint matrix $A$ in (\ref{eqn-QP-standard}). In
particular, the test set includes all \QP{} problems for which the smaller
of $m$ and $n$ is of the order of 500 or less.  This gave 121 \QP\@s
ranging in size from \Cute{BQP1VAR} (one variable and one constraint) to
\Cute{LINCONT} (1257 variables and 419 constraints).

\subsection{The implementation}
The combined primal-dual active-set method was implemented in \Matlab{} as
Algorithm~\PDQP\@.  For illustrative purposes, results were obtained for
\PDQP{} and the \QP{} solver \SQOPT{} \cite{GilMS06a}, which is a Fortran
implementation of a conventional two-phase (primal) active-set method for
large-scale \QP\@.  Both \PDQP{} and \SQOPT{} use the method of variable
reduction, which implicitly transforms a \KKT{} system of the form
(\ref{eqn-xBy}) into a block-triangular system.  The general \QP{}
constraints $\Ahat x - s = 0$ are partitioned into the form $B x\s{B} + S
x\s{S} + A\N x\N = 0$, where $B$ is square and nonsingular, with $A\B
=\tmat{B}{S}$ and $x\B = (x\s{B},x\s{S})$.  The vectors $x\s{B}$, $x\s{S}$,
$x\N$ are the associated basic, superbasic, and nonbasic components of
$(x,s)$ (see Gill, Murray and Saunders~\cite{GilMS05}).  If $H$ denotes
the Hessian $\Hhat$ of (\ref{eqn-QP-standard}) augmented by the zero rows and
columns corresponding to the slack variables, then the reduced Hessian $Z\T
H Z$ is defined in terms of the matrix $Z$ such that
\[
    Z  = P \pmat{-B\inv S \\ I \\ 0},
\]
where $P$ permutes the columns of $\tmat{\Ahat}{-I}$ into the order
$\tmatt{B}{S}{A\N}$. The matrix $Z$ is used only as an operator, i.e., it
is not stored explicitly.  Products of the form $Zv$ or $Z\T u$ are
obtained by solving with $B$ or $B^T$.  With these definitions, the
resulting block lower-triangular system has diagonal blocks $Z\T H Z$,
$B$ and $B^T$.

The initial nonsingular $B$ is identified using an LU factorization of
$A^T$.  The resulting $Z$ is used to form $Z\T H Z$, and a partial
Cholesky factorization with interchanges is be used to find an
upper-triangular matrix $R$ that is the factor of the largest nonsingular
leading submatrix of $Z\T H Z$.  If $Z\R$ denotes the columns of $Z$
corresponding to $R$, and $Z$ is partitioned as $Z = \tmat{Z\R}{Z\A}$, then
the index set $\setB$ consisting of the union of the column indices of
$B$ and the indices corresponding to $Z\R$ defines an appropriate initial
second-order consistent basis.

All \SQOPT{} runs were made using the default parameter options.  Both
\PDQP{} and \SQOPT{} are terminated at a point $(x,y,z)$ that satisfies the
optimality conditions of Proposition~\ref{prop-regQPopt} modified to
conform to the constraint format of (\ref{eqn-QP-standard}).  The
feasibility and optimality tolerances are given by $\epsfeas = 10^{-6}$ and
$\epsilon\opt = 10^{-6}$, respectively. For a given $\epsilon\opt$, \PDQP{}
and \SQOPT{} terminate when
\[
  \max_{i\in\setB} \mod{z_i} \le \epsilon\opt\infnorm{y},  \wordss{and}
  \left\{
    \begin{array}{c@{\hspace{10pt}}l}
      z_i \ge  -\epsilon\opt\infnorm{y}  & \mbox{if $x_i \ge - \ell_i$, $i\in\setN$;} \\[2ex]
      z_i \le \m\epsilon\opt\infnorm{y}  & \mbox{if $x_i \le\m u_i$,    $i\in\setN$.}
    \end{array}
  \right.
\]
Both \PDQP{} and \SQOPT{} use the \EXPAND{} anti-cycling procedure of Gill
\etal\ \cite{GilMSW89a} to allow the variables $(x,s)$ to move outside
their bounds by as much as $\epsfeas$.
The \EXPAND{} procedure does not guarantee that cycling will never occur
(see Hall and McKinnon~\cite{HalM96} for an example).  Nevertheless,
in many years of use, the authors have never known \EXPAND{} to cycle on a
practical problem.

\subsection{Numerical results} \label{sec-result-summary}
\PDQP{} and \SQOPT{} were applied to the 121 problems considered in
Section~\ref{sec-test-problems}. A summary of the results is given in
Table~\ref{tab:SQvsPDQP-auto}. The first four columns give the name of the
problem, the number of linear constraints \texttt{m}, the number of
variables \texttt{n}, and the optimal objective value \texttt{Objective}.
The next two columns summarize the \SQOPT{} result for the given problem,
with \texttt{Phs1} and \texttt{Itn} giving the phase-one iterations and
iteration total, respectively.  The last four columns summarize the results
for \PDQP\@. The first column gives the total number of primal and dual
iterations \texttt{Itn}. The second column gives the order in which the
primal and dual algorithms were applied, with \texttt{PD} indicating the
``primal-first'' strategy, and \texttt{DP} the ``dual-first'' strategy. The
final two columns, headed by \texttt{p-Itn}, and \texttt{d-Itn}, give the
iterations required for the primal method and the dual method,
respectively.

Of the 121 problems tested, two (\Cute{LINCONT} and \Cute{NASH}) are known
to be infeasible. This infeasibility was identified correctly by both
\SQOPT{} and \PDQP\@.  In total, \SQOPT{} solved 117 of the remaining 119
problems, but declared (incorrectly) that problems \Cute{RDW2D51U} and
\Cute{RDW2D52U} are unbounded. \PDQP{} solved the same number of problems,
but failed to achieve the required accuracy for the problems
\Cute{RDW2D51B} and \Cute{RDW2D52F}. In these two cases, the final
objective values computed by \PDQP{} were \texttt{1.0947648E-02} and
\texttt{1.0491239E-02} respectively, instead of the optimal values
\texttt{1.0947332e-02} and \texttt{1.0490828e-02}.  (The five
\Cute{RDW2D5*} problems in the test set are known to be difficult to solve,
see Gill and Wong~\cite{GilW15}.)

Figure~\ref{fig:pp} gives a performance profile (in $\log_2$ scale) for the
iterations required by \PDQP{} and \SQOPT\@. (For more details on the use
of performance profiles, see Dolan and Mor\'{e} \cite{DolM02}.) The figure
profiles the total iterations for \PDQP\@, the number of phase-2 iterations
for \SQOPT\@, and the sum of phase-1 and phase-2 iterations for \SQOPT\@.
Some care must be taken when interpreting the results in the profile.
First, the \CUTEst{} test set contains several groups made up of similar
variants of the same problem. In this situation, the profiles can be skewed
by the fact that a method will tend to exhibit similar performance on all
the problems in the group.  For example, \PDQP{} performs significantly
better than \SQOPT{} on all four \Cute{JNLBRNG*} problems, but
significantly worse on all 12 \Cute{LISWET*} problems.

Second, the phase-1 search direction for \SQOPT{} requires the computation
of the vector $-Z Z\T \ghat(x)$, where $\ghat(x)$ is the gradient of the
sum of infeasibilities of the bound constraints at $x$.  This implies that
a phase-1 iteration for \SQOPT{} requires solves with $B$ and $B^T$,
compared to solves with $B$, $B^T$ and $Z\T H Z$ for a phase-2
iteration.  As every iteration for \PDQP{} requires the solution of a
\KKT{} system, if the number of superbasic variables is not small, a
phase-1 iteration of \SQOPT{} requires considerably less work than an
iteration of \PDQP\@. It follows that the total iterations for \PDQP{} and
\SQOPT{} are not entirely comparable. In particular a profile
that would provide an accurate comparison with \PDQP{} lies somewhere
in-between the two \SQOPT{} profiles shown.

Notwithstanding these remarks, the profile indicates that \PDQP{} has
comparable overall performance to a primal method that ignores the
objective function while finding an initial feasible point.  This provides
some preliminary evidence that a combined primal-dual active set method can
be an efficient and reliable alternative to conventional two-phase
active-set methods.  The relative performance of the proposed method is
likely to increase when solving a sequence of related \QP\@s for which the
initial point for one \QP{} is close to being the solution for the next.
In this case, regardless of whether a primal or dual method is being used
to solve the \QP{}, the initial point may start off being primal or dual
feasible, or the number of primal or dual infeasibilities may be small.
This is typically the case for \QP{} subproblems arising in sequential
quadratic programming methods or mixed-integer \QP\@.

Figure~\ref{fig:bar2} provides a bar graph of the so-called ``outperforming
factors'' for iterations, as proposed by Morales~\cite{Mor02}.
On the $x$-axis, each bar corresponds to a particular test problem, with the problems
listed in the order of Table~\ref{tab:SQvsPDQP-auto}.  The $y$-axis
indicates the factor ($\log_2$ scaled) by which one solver  outperformed
the other. A bar in the positive region indicates that \PDQP{} outperformed
\SQOPT.  A negative bar means \SQOPT{} performed better.
A positive (negative) dark grey bar denotes a failure in \SQOPT{} (\PDQP).
Light grey bars denote a zero iteration count for a solver.

{\texttt{\scriptsize
    \begin{longtable}{|lrrr|rr|rcrr|}
      \caption{\label{tab:SQvsPDQP-auto} Results for \texttt{PDQP} and \texttt{SQOPT} on 121  CUTEst QPs.}
      \\\hline
      \multicolumn{1}{|c}{}&%
      \multicolumn{1}{c}{}&%
      \multicolumn{1}{c}{}&%
      \multicolumn{1}{c|}{}&%
      \multicolumn{2}{c|}{SQOPT\hstrt}&%
      \multicolumn{4}{c|}{PDQP}\\
      Name & m & n &
      \multicolumn{1}{c|}{Objective} &%
      \multicolumn{1}{c}{Phs1}&%
      \multicolumn{1}{c|}{Itn}&%
      \multicolumn{1}{c}{Itn}&%
      \multicolumn{1}{c}{Order}&
      \multicolumn{1}{c}{P-Itn}&
      \multicolumn{1}{c|}{D-Itn}\\\hline
      \endfirsthead
      \caption[]{ Results for \texttt{PDQP} and \texttt{SQOPT} on 121 CUTEst QPs. (continued)} \\\hline
      \multicolumn{1}{|c}{}&%
      \multicolumn{1}{c}{}&%
      \multicolumn{1}{c}{}&%
      \multicolumn{1}{c|}{}&%
      \multicolumn{2}{c|}{SQOPT\hstrt}&%
      \multicolumn{4}{c|}{PDQP}\\
      Name & m & n &
      \multicolumn{1}{c|}{Objective}&%
      \multicolumn{1}{c}{Phs1}&%
      \multicolumn{1}{c|}{Itn}&%
      \multicolumn{1}{c}{Itn}&%
      \multicolumn{1}{c}{Order}&
      \multicolumn{1}{c}{P-Itn}&
      \multicolumn{1}{c|}{D-Itn}\\\hline
      \endhead
      \hline
      \endfoot
      \hline
      \multicolumn{10}{|c|}
      {i = infeasible, f = failed\hstrt}\\\hline
      \endlastfoot
  ALLINQP    &    50 &    100 &  -9.1592833E+00 &    0 &   45     &   65     & PD &   63 &    2 \\ %
  AUG2DQP    &   100 &    220 &   1.7797215E+02 &    8 &  116     &  440     & PD &  326 &  114 \\ %
  AUG3D      &    27 &    156 &   8.3333333E-02 &    0 &   45     &   45     & DP &    0 &   45 \\ %
  AVGASA     &    10 &      8 &  -4.6319255E+00 &    5 &    8     &    5     & DP &    0 &    5 \\ %
  AVGASB     &    10 &      8 &  -4.4832193E+00 &    5 &    8     &    7     & DP &    0 &    7 \\ %
  BIGGSB1    &     1 &    100 &   1.5000000E-02 &    0 &  103     &  101     & PD &  101 &    0 \\ %
  BQP1VAR    &     1 &      1 &   0.0000000E+00 &    0 &    1     &    1     & DP &    0 &    1 \\ %
  BQPGABIM   &     1 &     50 &  -3.7903432E-05 &    0 &   36     &    7     & PD &    7 &    0 \\ %
  BQPGASIM   &     1 &     50 &  -5.5198140E-05 &    0 &   40     &    8     & PD &    8 &    0 \\ %
  CHENHARK   &     1 &    100 &  -2.0000000E+00 &    0 &  132     &   32     & DP &    0 &   32 \\ %
  CVXBQP1    &     1 &    100 &   2.2725000E+02 &    0 &  100     &  119     & DP &    2 &  117 \\ %
  CVXQP1     &    50 &    100 &   1.1590718E+04 &    5 &   67     &   91     & DP &    1 &   90 \\ %
  CVXQP2     &    25 &    100 &   8.1209404E+03 &    2 &   82     &   85     & DP &    2 &   83 \\ %
  CVXQP3     &    75 &    100 &   1.1943432E+04 &   17 &   46     &  113     & DP &    2 &  111 \\ %
  DEGENQP    &  1005 &     10 &   0.0000000E+00 &    0 &    6     &   18     & PD &   18 &    0 \\ %
  DTOC3      &    18 &     29 &   2.2459038E+02 &    1 &   10     &   17     & DP &    0 &   17 \\ %
  DUAL1      &     1 &     85 &   3.5012967E-02 &    0 &   88     &   88     & PD &   88 &    0 \\ %
  DUAL2      &     1 &     96 &   3.3733671E-02 &    0 &   99     &   99     & PD &   99 &    0 \\ %
  DUAL3      &     1 &    111 &   1.3575583E-01 &    0 &  106     &  106     & PD &  106 &    0 \\ %
  DUAL4      &     1 &     75 &   7.4609064E-01 &    0 &   61     &   61     & PD &   61 &    0 \\ %
  DUALC1     &   215 &      9 &   6.1552516E+03 &    1 &    9     &    4     & DP &    0 &    4 \\ %
  DUALC2     &   229 &      7 &   3.5513063E+03 &    2 &    4     &    4     & DP &    0 &    4 \\ %
  DUALC5     &   278 &      8 &   4.2723256E+02 &    1 &    7     &    6     & DP &    0 &    6 \\ %
  DUALC8     &   503 &      8 &   1.8309361E+04 &    4 &    6     &    8     & DP &    0 &    8 \\ %
  GENHS28    &     8 &     10 &   9.2717369E-01 &    0 &    3     &    5     & DP &    0 &    5 \\ %
  GMNCASE2   &  1050 &    175 &  -9.9444495E-01 &   18 &   99     &   91     & DP &    0 &   91 \\ %
  GMNCASE3   &  1050 &    175 &   1.5251466E+00 &   31 &  100     &   86     & DP &    0 &   86 \\ %
  GMNCASE4   &   350 &    175 &   5.9468849E+03 &   74 &  171     &  175     & DP &    0 &  175 \\ %
  GOULDQP2   &   199 &    399 &   9.0045697E-06 &    0 &  213     &  419     & DP &    0 &  419 \\ %
  GOULDQP3   &   199 &    399 &   5.6732908E-02 &    0 &  200     &  406     & PD &  205 &  201 \\ %
  GRIDNETA   &   100 &    180 &   9.5242163E+01 &    5 &   35     &  134     & PD &   81 &   53 \\ %
  GRIDNETB   &   100 &    180 &   4.7268237E+01 &    0 &   81     &   97     & DP &    0 &   97 \\ %
  GRIDNETC   &   100 &    180 &   4.8352347E+01 &    6 &   93     &  153     & DP &    0 &  153 \\ %
  HS3        &     1 &      2 &   0.0000000E+00 &    0 &    2     &    1     & PD &    1 &    0 \\ %
  HS3MOD     &     1 &      2 &   1.2325951E-32 &    0 &    2     &    1     & PD &    1 &    0 \\ %
  HS21       &     1 &      2 &  -9.9960000E+01 &    0 &    1     &    0     & PD &    0 &    0 \\ %
  HS28       &     1 &      3 &   1.2325951E-32 &    0 &    2     &    0     & PD &    0 &    0 \\ %
  HS35       &     1 &      3 &   1.1111111E-01 &    0 &    5     &    1     & DP &    0 &    1 \\ %
  HS35I      &     1 &      3 &   1.1111111E-01 &    0 &    5     &    1     & DP &    0 &    1 \\ %
  HS35MOD    &     1 &      3 &   2.5000000E-01 &    0 &    1     &    0     & PD &    0 &    0 \\ %
  HS44       &     6 &      4 &  -1.5000000E+01 &    0 &    2     &    4     & PD &    4 &    0 \\ %
  HS44NEW    &     6 &      4 &  -1.5000000E+01 &    0 &    4     &    9     & PD &    9 &    0 \\ %
  HS51       &     3 &      5 &  -8.8817841E-16 &    0 &    2     &    0     & DP &    0 &    0 \\ %
  HS52       &     3 &      5 &   5.3266475E+00 &    0 &    2     &    1     & DP &    0 &    1 \\ %
  HS53       &     3 &      5 &   4.0930232E+00 &    0 &    2     &    1     & DP &    0 &    1 \\ %
  HS76       &     3 &      4 &  -4.6818181E+00 &    0 &    4     &    4     & DP &    0 &    4 \\ %
  HS76I      &     3 &      4 &  -4.6818181E+00 &    0 &    4     &    4     & DP &    0 &    4 \\ %
  HS118      &    17 &     15 &   6.6482045E+02 &    0 &   21     &   23     & DP &    0 &   23 \\ %
  HS268      &     5 &      5 &   7.2759576E-12 &    0 &    8     &    0     & PD &    0 &    0 \\ %
  HUES-MOD   &     2 &    100 &   3.4829823E+07 &    1 &  103     &    7     & DP &    0 &    7 \\ %
  HUESTIS    &     2 &    100 &   3.4829823E+09 &    1 &  103     &    7     & DP &    0 &    7 \\ %
  JNLBRNG1   &     1 &    529 &  -1.8004556E-01 &    0 &  292     &   82     & PD &   82 &    0 \\ %
  JNLBRNG2   &     1 &    529 &  -4.1023852E+00 &    0 &  252     &   42     & PD &   42 &    0 \\ %
  JNLBRNGA   &     1 &    529 &  -3.0795806E-01 &    0 &  292     &  292     & PD &  292 &    0 \\ %
  JNLBRNGB   &     1 &    529 &  -6.5067871E+00 &    0 &  247     &  247     & PD &  247 &    0 \\ %
  KSIP       &  1001 &     20 &   5.7579792E-01 &    0 & 2847     &   36     & DP &    0 &   36 \\ %
  LINCONT    &   419 &   1257 &   infeasible    &  138 &  138\infs&  304\infs& DP &    0 &  304 \\ %
  LISWET1    &   100 &    106 &   2.6072632E-01 &    0 &   52     &  401     & DP &    0 &  401 \\ %
  LISWET2    &   100 &    106 &   2.5876398E-01 &    0 &   63     &  378     & DP &    0 &  378 \\ %
  LISWET3    &   100 &    106 &   2.5876398E-01 &    0 &   64     &  378     & DP &    0 &  378 \\ %
  LISWET4    &   100 &    106 &   2.5876399E-01 &    0 &   61     &  378     & DP &    0 &  378 \\ %
  LISWET5    &   100 &    106 &   2.5876410E-01 &    0 &   58     &  378     & DP &    0 &  378 \\ %
  LISWET6    &   100 &    106 &   2.5876390E-01 &    0 &   67     &  378     & DP &    0 &  378 \\ %
  LISWET7    &   100 &    106 &   2.5895785E-01 &    0 &   68     &  378     & DP &    0 &  378 \\ %
  LISWET8    &   100 &    106 &   2.5747454E-01 &    0 &   94     &  417     & DP &    0 &  417 \\ %
  LISWET9    &   100 &    103 &   2.1543892E+01 &    0 &   28     &  263     & DP &    0 &  263 \\ %
  LISWET10   &   100 &    106 &   2.5874831E-01 &    0 &   68     &  378     & DP &    0 &  378 \\ %
  LISWET11   &   100 &    106 &   2.5704145E-01 &    0 &   68     &  379     & DP &    0 &  379 \\ %
  LISWET12   &   100 &    106 &   9.1994948E+00 &    0 &   37     &  460     & DP &    0 &  460 \\ %
  LOTSCHD    &     7 &     12 &   2.3984158E+03 &    4 &    8     &   16     & DP &    0 &   16 \\ %
  MOSARQP1   &    10 &    100 &  -1.5420010E+02 &    0 &  102     &   52     & DP &    0 &   52 \\ %
  MOSARQP2   &    10 &    100 &  -2.0651670E+02 &    0 &  100     &   33     & DP &    0 &   33 \\ %
  NASH       &    24 &     72 &   infeasible    &    5 &    5\infs&   24\infs& DP &    0 &   24 \\ %
  OBSTCLAE   &     1 &    529 &   1.6780270E+00 &    0 &  605     &  178     & DP &    0 &  178 \\ %
  OBSTCLAL   &     1 &    529 &   1.6780270E+00 &    0 &  263     &  263     & PD &  263 &    0 \\ %
  OBSTCLBL   &     1 &    529 &   6.5193252E+00 &    0 &  469     &  469     & PD &  469 &    0 \\ %
  OBSTCLBM   &     1 &    529 &   6.5193252E+00 &    0 &  484     &  189     & DP &    0 &  189 \\ %
  OBSTCLBU   &     1 &    529 &   6.5193252E+00 &    0 &  303     &  303     & PD &  303 &    0 \\ %
  OSLBQP     &     1 &      8 &   6.2500000E+00 &    0 &    6     &    0     & PD &    0 &    0 \\ %
  PENTDI     &     1 &    500 &  -7.5000000E-01 &    0 &    2     &    2     & PD &    2 &    0 \\ %
  POWELL20   &   100 &    100 &   5.2703125E+04 &   49 &   52     &   99     & DP &    0 &   99 \\ %
  PRIMAL1    &    85 &    325 &  -3.5012967E-02 &    0 &  217     &   70     & PD &   70 &    0 \\ %
  PRIMAL2    &    96 &    649 &  -3.3733671E-02 &    0 &  407     &   97     & PD &   97 &    0 \\ %
  PRIMAL3    &   111 &    745 &  -1.3575583E-01 &    0 & 1223     &  102     & PD &  102 &    0 \\ %
  PRIMAL4    &    75 &   1489 &  -7.4609064E-01 &    0 & 1264     &   63     & PD &   63 &    0 \\ %
  PRIMALC1   &     9 &    230 &  -6.1552516E+03 &    0 &   18     &    5     & PD &    5 &    0 \\ %
  PRIMALC2   &     7 &    231 &  -3.5513063E+03 &    0 &    3     &    5     & PD &    5 &    0 \\ %
  PRIMALC5   &     8 &    287 &  -4.2723256E+02 &    0 &   10     &    6     & PD &    6 &    0 \\ %
  PRIMALC8   &     8 &    520 &  -1.8309432E+04 &    0 &   30     &    6     & PD &    6 &    0 \\ %
  QPCBLEND   &    74 &     83 &  -7.8425425E-03 &    0 &  111     &  182     & PD &  182 &    0 \\ %
  QPCBOEI1   &   351 &    384 &   1.1503952E+07 &  415 & 1055     &  793     & PD &  395 &  398 \\ %
  QPCBOEI2   &   166 &    143 &   8.1719635E+06 &  142 &  315     &  340     & PD &  163 &  177 \\ %
  QPCSTAIR   &   356 &    467 &   6.2043917E+06 &  210 &  433     &  970     & PD &  645 &  325 \\ %
  QUDLIN     &     1 &    420 &  -8.8290000E+06 &    0 &  419     &  419     & PD &  419 &    0 \\ %
  RDW2D51F   &   225 &    578 &   1.1209939E-03 &   29 &   29     &  217     & DP &    0 &  217 \\ %
  RDW2D51U   &   225 &    578 &   8.3930032E-04 &   14 &   16\fail&  219     & DP &    0 &  219 \\ %
  RDW2D52B   &   225 &    578 &   1.0947648E-02 &  349 &  488     &  316\fail& DP &    0 &  314 \\ 
  RDW2D52F   &   225 &    578 &   1.0491239E-02 &   29 &  191     &  414\fail& DP &    0 &  414 \\ 
  RDW2D52U   &   225 &    578 &   1.0455316E-02 &   15 &  318\fail&  219     & DP &    0 &  219 \\ %
  S268       &     5 &      5 &   7.2759576E-12 &    0 &    8     &    0     & PD &    0 &    0 \\ %
  SIM2BQP    &     1 &      2 &   0.0000000E+00 &    0 &    1     &    1     & PD &    1 &    0 \\ %
  SIMBQP     &     1 &      2 &   6.0185310E-31 &    0 &    2     &    1     & PD &    1 &    0 \\ %
  STCQP1     &    30 &     65 &   4.9452085E+02 &    8 &   53     &   20     & DP &    0 &   20 \\ %
  STCQP2     &   128 &    257 &   1.4294017E+03 &   80 &  215     &   73     & DP &    0 &   73 \\ %
  STEENBRA   &   108 &    432 &   1.6957674E+04 &   14 &   89     &  177     & PD &    2 &  175 \\ %
  TAME       &     1 &      2 &   3.0814879E-33 &    0 &    1     &    1     & PD &    1 &    0 \\ %
  TORSION1   &     1 &    484 &  -4.5608771E-01 &    0 &  256     &  256     & PD &  256 &    0 \\ %
  TORSION2   &     1 &    484 &  -4.5608771E-01 &    0 &  544     &  144     & DP &    0 &  144 \\ %
  TORSION3   &     1 &    484 &  -1.2422498E+00 &    0 &  112     &  112     & PD &  112 &    0 \\ %
  TORSION4   &     1 &    484 &  -1.2422498E+00 &    0 &  689     &  288     & DP &    0 &  288 \\ %
  TORSION5   &     1 &    484 &  -2.8847068E+00 &    0 &   40     &   40     & PD &   40 &    0 \\ %
  TORSION6   &     1 &    484 &  -2.8847068E+00 &    0 &  708     &  360     & DP &    0 &  360 \\ %
  TORSIONA   &     1 &    484 &  -4.1611287E-01 &    0 &  272     &  272     & PD &  272 &    0 \\ %
  TORSIONB   &     1 &    484 &  -4.1611287E-01 &    0 &  529     &  128     & DP &    0 &  128 \\ %
  TORSIONC   &     1 &    484 &  -1.1994864E+00 &    0 &  120     &  120     & PD &  120 &    0 \\ %
  TORSIOND   &     1 &    484 &  -1.1994864E+00 &    0 &  681     &  280     & DP &    0 &  280 \\ %
  TORSIONE   &     1 &    484 &  -2.8405962E+00 &    0 &   40     &   40     & PD &   40 &    0 \\ %
  TORSIONF   &     1 &    484 &  -2.8405962E+00 &    0 &  761     &  360     & DP &    0 &  360 \\ %
  UBH1       &    60 &     99 &   1.1473520E+00 &   11 &   40     &  112     & DP &    0 &  112 \\ %
  YAO        &    20 &     22 &   2.3988296E+00 &    0 &    2     &   20     & DP &    0 &   20 \\ %
  ZECEVIC2   &     2 &      2 &  -4.1250000E+00 &    0 &    4     &    5     & PD &    5 &    0 \\ %
\end{longtable}}}

\begin{figure}[t]
  \begin{center}
    \begin{minipage}{.49\textwidth}
      \includegraphics[width=.97\textwidth]{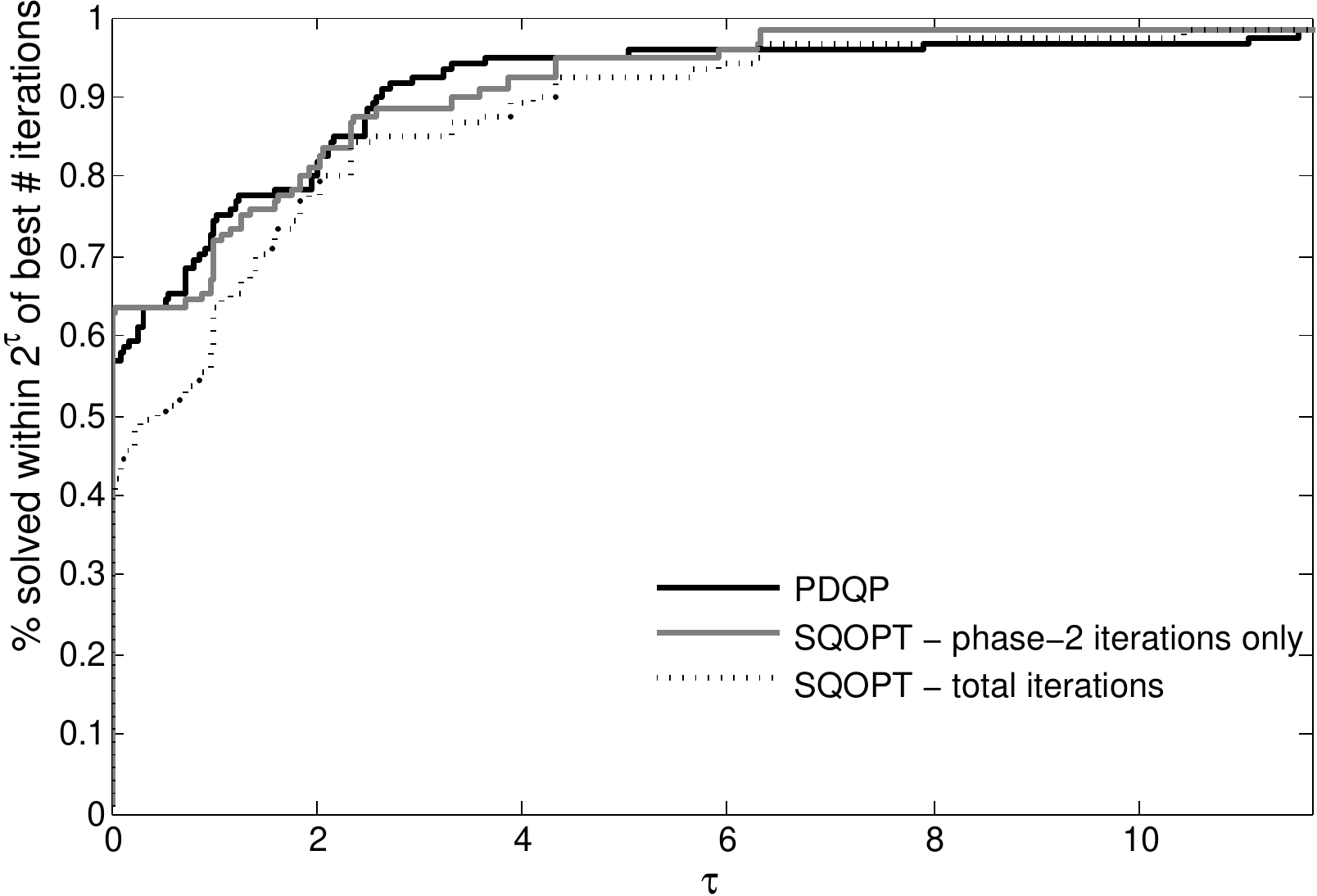}
      \caption{\label{fig:pp} Performance profile of number of iterations
        for \PDQP{} and \SQOPT{} on 121 \texttt{CUTEst} QP problems.}
    \end{minipage}
    \begin{minipage}{.49\textwidth}
      \includegraphics[width=.97\textwidth]{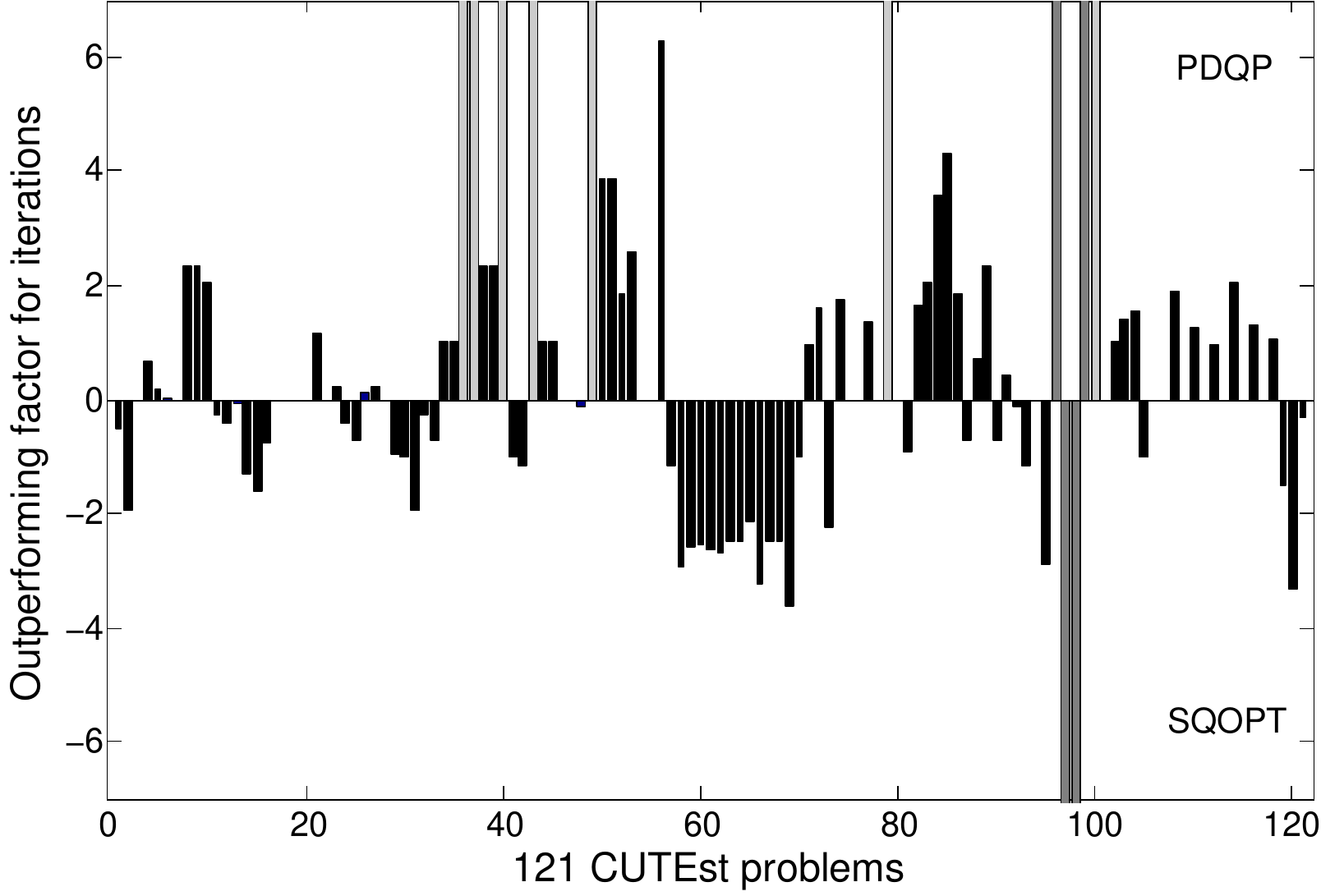}
      \caption{\label{fig:bar2} Outperforming factors for total iterations for each
        of the 121 \texttt{CUTEst} QP problems solved using \PDQP{} and \SQOPT{}.}
    \end{minipage}
  \end{center}
\end{figure}

\section{Summary and Conclusions}

A pair of two-phase active-set methods, one primal and one dual, are
proposed for convex quadratic programming. The methods are derived in terms
of a general framework for solving a convex quadratic program with general
equality constraints and simple lower bounds on the variables.  In each of
the methods, the search directions satisfy a \KKT{} system of equations
formed from Hessian and constraint components associated with an
appropriate column basis.  The composition of the basis is specified by an
active-set strategy that guarantees the nonsingularity of each set of
\KKT{} equations. In addition, a combined primal-dual active set method is
proposed in which a shifted dual \QP{} is solved for a feasible point for
the primal (or \emph{vice versa}), thereby avoiding the need for an initial
feasibility phase that ignores the properties of the objective function.
This approach provides an effective method for finding a dual-feasible
point when the QP is convex but not strictly convex. Preliminary numerical
experiments indicate that this combined primal-dual active set method can
be an efficient and reliable alternative to conventional two-phase
active-set methods.  Future work will focus on the application of the
proposed methods to situations in which a series of related \QP\@s must be
solved, for example, in sequential quadratic programming methods and
methods for mixed-integer nonlinear programming.

\section*{Acknowledgments}
The authors would like to thank two referees for constructive comments that
significantly improved the presentation.

\bibliography{pdqpreferences}
\bibliographystyle{myplain}


\appendix

\section{Appendix}
The appendix concerns some basic results used in previous sections.  The
first result shows that the nonsingularity of a \KKT{} matrix may be
established by checking that the two row blocks $\tmat{ H }{ A^T }$ and
$\tmat{ A }{ -M }$ have full row rank.

\begin{proposition}\label{propA-nonsing}
Assume that $H$ and $M$ are symmetric, positive semidefinite matrices. The
vectors $u$ and $v$ satisfy
\begin{equation}\label{eqn-KKT1}
 \pmat{ H  & A^T \\
        A  & -M   }
 \pmat{\m u \\
        - v }
 = \pmat{ 0 \\
          0 }
\end{equation}
if and only if
\begin{equation}\label{eqn-KKT2}
 \pmat{ H \\
        A   } u
  = \pmat{ 0 \\
           0   }  \words{and}
    \pmat{ A^T \\
          -M  } v
  = \pmat{ 0   \\
           0     }.
\end{equation}
\end{proposition}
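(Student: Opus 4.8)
The reverse implication is immediate: if $Hu=0$, $Au=0$, $A^T v=0$ and $Mv=0$, then the first block row of (\ref{eqn-KKT1}) reads $Hu - A^T v = 0$ and the second reads $Au + Mv = 0$, both of which hold termwise. So the plan is to concentrate on the forward implication.

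Suppose $u$ and $v$ satisfy (\ref{eqn-KKT1}), i.e.\ $Hu - A^T v = 0$ and $Au + Mv = 0$. The idea is to extract a nonnegative quadratic form that must vanish. I would premultiply the first identity by $u^T$ and the second by $v^T$ to obtain $u^T H u - u^T A^T v = 0$ and $v^T A u + v^T M v = 0$; adding these and using $u^T A^T v = (v^T A u)^T = v^T A u$ gives $u^T H u + v^T M v = 0$. Since $H$ and $M$ are symmetric positive semidefinite, both terms are nonnegative, hence $u^T H u = 0$ and $v^T M v = 0$.

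The next step is the key fact that a symmetric positive semidefinite matrix $S$ with $w^T S w = 0$ must satisfy $Sw = 0$: writing $S = R^T R$ (e.g.\ via a Cholesky-type or spectral factorization) gives $0 = w^T S w = \norm{Rw}^2$, so $Rw = 0$ and therefore $Sw = R^T(Rw) = 0$. Applying this to $H$ with $w=u$ and to $M$ with $w=v$ yields $Hu = 0$ and $Mv = 0$. Substituting these back into the two block identities $Hu - A^T v = 0$ and $Au + Mv = 0$ immediately gives $A^T v = 0$ and $Au = 0$, which together with $Hu=0$ and $Mv=0$ is exactly (\ref{eqn-KKT2}).

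I do not expect a serious obstacle here; the only point requiring care is the implication $w^T S w = 0 \Rightarrow Sw = 0$ for positive semidefinite $S$, which is where the semidefiniteness hypothesis is essential and which I would state explicitly rather than treat as obvious. Everything else is bookkeeping with the two block equations.
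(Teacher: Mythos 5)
Your proposal is correct and follows essentially the same route as the paper: premultiply the block identities by $u^T$ and $v^T$, add to cancel the cross term, conclude $u^T H u + v^T M v = 0$, and then use semidefiniteness to get $Hu=0$ and $Mv=0$ before back-substituting. The only difference is that you spell out the implication $w^T S w = 0 \Rightarrow Sw = 0$ via a factorization, which the paper simply asserts; this is a reasonable bit of extra care but not a different argument.
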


\begin{proof}
If (\ref{eqn-KKT2}) holds, then (\ref{eqn-KKT1}) holds, which establishes
the ``if'' direction. Now assume that $u$ and $v$ are vectors such that
(\ref{eqn-KKT1}) holds. Then,
\[
 u\T H u - u\T A\T v = 0, \wordss{and}
 v\T A u + v\T  M  v = 0.
\]
Adding these equations gives the identity $u\T H u + v\T  M v = 0$.  But
then, the symmetry and semidefiniteness of $H$ and $M$ imply $u\T H u=0$
and $v\T  M v = 0$. This can hold only if $H u = 0$ and $M v=0$. If $Hu=0$
and $M v=0$, (\ref{eqn-KKT1}) gives $A\T v=0$ and $Au=0$, which implies
that (\ref{eqn-KKT2}) holds, which completes the proof.
\end{proof}

The next result shows that when checking a subset of the columns of a
symmetric positive semidefinite matrix for linear dependence, it is only
the diagonal block that is of importance. The off-diagonal block may be
ignored.

\begin{proposition}\label{propA-diag}
Let $H$ be a symmetric, positive semidefinite matrix partitioned as
\[
 H = \pmat{ H_{11}    & H_{12}     \\[1pt]
            H_{12}^T  & H_{22}\drop  }.
\]
Then,
\[
 \pmat{ H_{11}  \\[1pt]
        H_{12}^T  } u
 = \pmat{ 0 \\
          0  } \words{if and only if}
 H_{11} u = 0.
\]
\end{proposition}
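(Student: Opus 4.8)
The plan is to derive this from the elementary fact --- already used in the proof of Proposition~\ref{propA-nonsing} --- that a symmetric positive semidefinite matrix $H$ satisfies the implication $w\T H w = 0 \Rightarrow Hw = 0$. The proposition then follows by applying this fact to a suitably padded version of $u$, so that the off-diagonal block never has to be manipulated directly.

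The forward implication is immediate, since $H_{11}u$ is simply the top block of $\pmat{ H_{11} \\ H_{12}\T } u$. For the converse, I would assume $H_{11}u = 0$ and introduce the vector $w$ obtained by padding $u$ with zeros in the second block, partitioned conformally with $H$. A one-line block computation gives $w\T H w = u\T H_{11}u = 0$, so positive semidefiniteness of $H$ yields $Hw = 0$. Reading off the two blocks of $Hw$, namely $H_{11}u$ and $H_{12}\T u$, then shows that both vanish, which is exactly the assertion $\pmat{ H_{11} \\ H_{12}\T } u = 0$.

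There is essentially no obstacle here; the only step deserving a word of justification is the implication $w\T H w = 0 \Rightarrow Hw = 0$ for symmetric positive semidefinite $H$, which follows from a symmetric factorization $H = C\T C$ (so that $w\T H w = \norm{Cw}^2 = 0$ forces $Cw = 0$, hence $Hw = C\T C w = 0$), or alternatively from diagonalizing $H$. Since precisely this reasoning is already carried out in the proof of Proposition~\ref{propA-nonsing}, the present result is in effect a corollary of it, specialized to the padded vector $w$, with the off-diagonal block $H_{12}$ playing no active role.
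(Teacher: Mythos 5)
Your proposal is correct and is essentially the paper's own argument: the paper likewise pads $u$ with a zero block, observes that $\pmat{H_{11} \\ H_{12}^T}u = H\pmat{u\\0}$, and reduces everything to $u\T H_{11}u = 0$ via positive semidefiniteness. The only cosmetic difference is that the paper writes the argument as a single chain of equivalences rather than splitting the two directions.
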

\begin{proof}
If $H$ is positive semidefinite, then $H_{11}$ is positive semidefinite,
and it holds that
\[
   \pmat{ 0 \\
          0 }
 = \pmat{ H_{11}  \\[1pt]
          H_{12}^T  } u
 = \pmat{ H_{11}    & H_{12}     \\[1pt]
          H_{12}^T  & H_{22}\drop  }
   \pmat{ u \\
          0 }
\]
if and only if
\[
0 =  \pmat{ u^T      & 0 }
     \pmat{ H_{11}   & H_{12}     \\[1pt]
            H_{12}^T & H_{22}\drop  }
     \pmat{ u \\[1pt]
            0 }
  = u\T H_{11} u
\]
if and only if $H_{11} u = 0$, as required.
\end{proof}

In the following propositions, the distinct integers $k$ and $l$, together with
integers from the index sets $\setB$ and $\setN$ define a partition of
$\setI = \{1$, $2$, \dots, $n\}$, i.e., $\setI = \setB \cup \{k\} \cup
\{l\} \cup \setN$.  If $w$ is any $n$-vector, the $n\B$-vector $w\B$ and
$w\N$-vector $w\N$ denote the vectors of components of $w$ associated with
$\setB$ and $\setN$.  For the symmetric Hessian $H$, the matrices $H\BB$
and $H\NN$ denote the subset of rows and columns of $H$ associated with the
sets $\setB$ and $\setN$ respectively. The unsymmetric matrix of components
$h_{ij}$ with $i\in\setB$ and $j\in \setN$ will be denoted by $H\BN$.
Similarly, $A\B$ and $A\N$ denote the matrices of columns associated with
$\setB$ and $\setN$.

The next result concerns the row rank of the $\tmat{A }{ -M}$ block of the
\KKT{} matrix.

\begin{proposition}\label{propA-AMfullrowrank}
If the matrix $\tmattt{ a_l }{ a_k }{ A\B }{ -M }$ has full row rank, and there
exist $\dx_l$, $\dx_k$, $\dx\B$, and $\dy$ such that $a_l \dx_l + a_k \dx_k
+ A\B \dx\B +  M \dy=0$ with $\dx_k\ne 0$, then $\tmatt{ a_l }{ A\B }{ -M }$
has full row rank.
\end{proposition}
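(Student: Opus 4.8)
The plan is to argue entirely at the level of column spaces. Recall that a matrix with $m$ rows has full row rank precisely when its columns span $\Re^m$. So the hypothesis says that the columns of $C \defeq \tmattt{a_l}{a_k}{A\B}{-M}$ span $\Re^m$, and the goal is to show that the columns of $\widetilde C \defeq \tmatt{a_l}{A\B}{-M}$ already span $\Re^m$, i.e. that deleting the single column $a_k$ loses nothing.

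\textbf{Key step.} Use the given dependence to express $a_k$ in terms of the remaining columns. The relation $a_l\dx_l + a_k\dx_k + A\B\dx\B + M\dy = 0$ with $\dx_k\ne 0$ rearranges to
\[
 a_k = -\frac{1}{\dx_k}\big(a_l\dx_l + A\B\dx\B + M\dy\big).
\]
Since $M\dy = (-M)(-\dy)$, the right-hand side is a linear combination of the columns of $\widetilde C$. Hence $a_k$ lies in the column space of $\widetilde C$, so $\operatorname{range}(C) = \operatorname{range}(\widetilde C)$. By the hypothesis, $\operatorname{range}(C) = \Re^m$, and therefore $\operatorname{range}(\widetilde C) = \Re^m$, which is exactly the statement that $\tmatt{a_l}{A\B}{-M}$ has full row rank.

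\textbf{Main obstacle.} There is essentially no hard step here; the only thing to be careful about is the sign bookkeeping between the block $-M$ appearing in the matrices and the term $+M\dy$ appearing in the null-space relation, which is why the substitution $-\dy$ for the coefficient on the $-M$ block is made explicit above. (Alternatively, one can package the same argument by noting that $(\dx_l,\dx_k,\dx\B,-\dy)$ is a null vector of $C$ with nonzero $a_k$-component, so $a_k$ is redundant.)
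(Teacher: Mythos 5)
Your proof is correct and is essentially the paper's argument in dual form: the paper shows that any vector $u$ with $u\T\tmatt{a_l}{A\B}{-M}=0$ must also satisfy $u\T a_k=0$ (by pairing $u$ against the dependence relation and using $\dx_k\ne 0$), whereas you show directly that $a_k$ lies in the column space of $\tmatt{a_l}{A\B}{-M}$. Both arguments reduce to the same observation that the column $a_k$ is redundant, and your sign bookkeeping with the $-M$ block is handled correctly.
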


\begin{proof}
It must be established that $u\T \pmat{ a_l & A\B & -M }=0$ implies that
$u=0$. For a given $u$, let $\gamma = -u\T a_k$, so that
\[
 \pmat{ u^T & \gamma }
 \pmat{ a_l & a_k & A\B    & -M \\
            & 1                    }
  = \pmat{ 0 & 0 & 0 & 0 }.
\]
Then,
\[
 0
 = \pmat{ u^T & \gamma }
   \pmat{ a_l & a_k & A\B    & -M  \\
              & 1                    }
   \pmat{ \m \dx_l \\
          \m \dx_k \\
          \m \dx\B \\
           - \dy }
 = \gamma \,\dx_k.
\]
As $\dx_k\ne0$, it must hold that $\gamma = 0$, in which case
\[
  u^T \mattt{ a_l }{ a_k }{ A\B }{ -M  } = 0.
\]
As $\tmattt{ a_l }{ a_k }{ A\B }{ -M }$ has full row rank by assumption, it
follows that $u=0$ and $\tmatt{ a_l }{ A\B }{ -M }$ must have full row rank.
\end{proof}

An  analogous result holds concerning the $\tmat{ H }{ A^T }$ block of
the \KKT{} matrix.

\begin{proposition} \label{propA-HATfullrowrank}
If $\tmat{ H\BBd }{ A\B^T }$ has full row rank, and there exist
quantities $\dx\N$, $\dx\B$, $\dy$, and $\dz_k$ such that
\begin{equation}\label{eqn-HATfullrowrank}
  \pmat{  h\Nk^T & h\Bk^T & a_k^T & 1 \\[1pt]
          h\BNd  & H\BBd  & A\B^T      }
  \pmat{ \m\dx\N \\
         \m\dx\B \\
          -\dy   \\
          -\dz_k   }
 = \pmat{ 0 \\
          0 },
\end{equation}
with $\dz_k \ne 0$, then the matrix
\[
  \pmat{  h\kkd  & h\Bk^T & a_k^T \\[1pt]
          h\Bkd  & H\BBd  & A\B^T }
\]
has full row rank.
\end{proposition}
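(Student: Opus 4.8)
The plan is to follow the pattern of the proof of Proposition~\ref{propA-AMfullrowrank}, with Proposition~\ref{propA-diag} playing the role that the ``auxiliary row with a $1$'' device plays there. Let $(\beta,u^T)$, with $\beta\in\Re$ and $u$ an $n\B$-vector, be an arbitrary left-null vector of
\[
 \pmat{ h\kkd  & h\Bk^T & a_k^T \\ h\Bkd  & H\BBd  & A\B^T };
\]
the goal is to show that $\beta=0$ and $u=0$. Reading off the three column blocks, being a left-null vector is equivalent to the three identities $\beta h\kk + u^T h\Bk = 0$, $\beta h\Bk^T + u^T H\BB = 0$, and $\beta a_k^T + u^T A\B^T = 0$. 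The first two of these say precisely that $(\beta,u)$ lies in the null space of the principal submatrix of $H$ whose row and column indices are $\setB\cup\{k\}$.

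First I would apply Proposition~\ref{propA-diag} to the symmetric positive semidefinite matrix $H$, partitioned according to the index sets $\setB\cup\{k\}$ and $\setN\cup\{l\}$. Since $(\beta,u)$ annihilates the $(\setB\cup\{k\})$ principal block of $H$, it must also annihilate the corresponding off-diagonal block; restricting to the columns indexed by $\setN$ gives the extra identity $\beta h\Nk^T + u^T H\BN = 0$. Combining this with the two remaining defining identities above, and using that the last column of the coefficient matrix in {\rm(\ref{eqn-HATfullrowrank})} equals $(1,0,\dots,0)^T$, one finds that left-multiplying that coefficient matrix by $(\beta,u^T)$ yields the row vector $\bigl(\,0,\ 0,\ 0,\ \beta\,\bigr)$.

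The second step brings in the hypothesis $\dz_k\ne 0$ on the witness $(\dx\N,\dx\B,\dy,\dz_k)$ of {\rm(\ref{eqn-HATfullrowrank})}. Because that coefficient matrix annihilates the column $(\dx\N,\dx\B,-\dy,-\dz_k)^T$ on the right, forming the triple product with $(\beta,u^T)$ on the left and associating it the two possible ways gives $0=\bigl(\,0,\ 0,\ 0,\ \beta\,\bigr)(\dx\N,\dx\B,-\dy,-\dz_k)^T=-\beta\,\dz_k$, whence $\beta=0$. With $\beta=0$, the second and third defining identities reduce to $u^T\tmat{ H\BBd }{ A\B^T }=0$, and the full-row-rank hypothesis on $\tmat{ H\BBd }{ A\B^T }$ then forces $u=0$. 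Hence the only left-null vector is zero, i.e., the matrix in the statement has full row rank.

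The step I expect to require the most care is the appeal to Proposition~\ref{propA-diag}: it is exactly what lets one pass from the annihilation of the $(\setB\cup\{k\})$ principal block of $H$ to the annihilation of the off-diagonal block $H\BN$. Without that identity, left-multiplying {\rm(\ref{eqn-HATfullrowrank})} by $(\beta,u^T)$ would leave an uncontrolled term of the form $(\beta h\Nk + H\BN^T u)^T\dx\N$, and the cancellation against $\dz_k$ that delivers $\beta=0$ would break down. Everything else is the bookkeeping already carried out in the proof of Proposition~\ref{propA-AMfullrowrank}.
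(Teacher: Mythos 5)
Your proof is correct and takes essentially the same route as the paper's: the paper first shows that the matrix with the $\setN$-column block $\bigl(\begin{smallmatrix} h\Nk^T \\ H\BN\drop \end{smallmatrix}\bigr)$ in place of the $k$-diagonal block has full row rank---by pairing an arbitrary left-null vector with the witness of {\rm(\ref{eqn-HATfullrowrank})} to force the first component to vanish via $\dz_k\ne 0$, then using the full row rank of $\tmat{ H\BBd }{ A\B^T }$---and only at the end passes to the stated matrix via the diagonal-block equivalence, whereas you invoke that equivalence (Proposition~\ref{propA-diag}) at the outset so as to work with the target matrix directly. The mathematical content is identical; as a side remark, your explicit appeal to Proposition~\ref{propA-diag} for the equivalence between the two matrices is the right one (the paper cites Proposition~\ref{propA-nonsing} for that step, which appears to be a slip).
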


\begin{proof}
Let $\tmat{ \mu }{ v^T }$ be any vector such that
\[
\mat{ \mu }{ v^T }
  \pmat{  h\Nk^T  & h\Bk^T & a_k^T \\[1pt]
          h\BNd   & H\BBd  & A\B^T }
= \pmat{0 & 0 & 0 }.
\]
The assumed identity (\ref{eqn-HATfullrowrank}) gives
\[
0 = \mat{ \mu }{ v^T }
    \pmat{  h\Nk^T  & h\Bk^T & a_k^T \\[1pt]
            h\BNd   & H\BBd  & A\B^T }
    \pmat{ \m\dx\N \\
           \m\dx\B \\
            -\dy   }
  = \mu \,\dz_k.
\]
As $\dz_k\ne 0$ by assumption, it must hold that $\mu=0$. The full row rank
of $\tmat{ H\BBd }{ A\B^T }$ then gives $v = 0$ and
\[
  \pmat{  h\Nk^T   & h\Bk^T & a_k^T \\[1pt]
          h\BNd    & H\BBd  & A\B^T }
\]
must have full row rank. Proposition~\ref{propA-nonsing} implies that this
is equivalent to
\[
  \pmat{  h\kkd & h\Bk^T & a_k^T \\[1pt]
          h\Bkd & H\BBd  & A\B^T }
\]
having full row rank.
\end{proof}

The next proposition concerns the primal subiterations when the constraint index
$k$ is moved from $\setB$ to $\setN$. In particular, it is shown that the
$K_l$ matrix is nonsingular after a subiteration.

\begin{proposition}\label{propA-primalnonsing}
Assume that $(\dx_l$, $\dx_k$, $\dx\B$, $-\dy$, $-\dz_l)$ is the unique solution of
the equations
\begin{equation}\label{eqn-primaldir}
  \pmat{ h\lld & h\kld & h\Bl^T & a_l^T & \m 1 \\[1pt]
         h\kld & h\kkd & h\Bk^T & a_k^T &      \\[1pt]
         h\Bld & h\Bkd & H\BBd  & A\B^T &      \\[1pt]
         a_l   & a_k   & A\B    & -M    &      \\[1pt]
         1     &       &        &       &   -1   }
  \pmat{ \m\dx_l \\[1pt]
         \m\dx_k \\[1pt]
         \m\dx\B \\[1pt]
        -\dy     \\[1pt]
        -\dz_l }
= \pmat{ 0 \\[1pt]
         0 \\[1pt]
         0 \\[1pt]
         0 \\[1pt]
         1   },
\end{equation}
and that $\dx_k\ne 0$. Then, the matrices $K_l$ and $K_k$ are nonsingular,
where
\[
 K_l = \pmat{ h\lld  & h\Bl^T & a_l^T \\[1pt]
              h\Bld  & H\BBd  & A\B^T \\[1pt]
              a_l    & A\B    &  -M     } \wordss{and}
 K_k = \pmat{ h\kkd  & h\Bk^T & a_k^T \\[1pt]
              h\Bkd  & H\BBd  & A\B^T \\[1pt]
              a_k    & A\B    &  -M     }.
\]
\end{proposition}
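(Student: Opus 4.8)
Throughout, write $\widehat K$ for the matrix appearing in (\ref{eqn-primaldir}), put $P=\setB\cup\{k,l\}$, and let $d=(\dx_l,\dx_k,\dx\B,-\dy,-\dz_l)$ denote its unique solution, so that $\widehat K$ is nonsingular and $\widehat K d$ is the last coordinate vector. The plan is to deduce everything by producing, in each situation that would make $K_l$ or $K_k$ singular, an explicit nonzero null vector of $\widehat K$, thereby contradicting the hypothesis. First I would read off the content of (\ref{eqn-primaldir}): with $\dx$ the $n$-vector having $\dx\N=0$ and $\dx_l,\dx_k,\dx\B$ as its entries on $P$, the equations say $A\dx+M\dy=0$ and, on setting $\dz:=H\dx-A\T\dy$, that $\dz\B=0$ and $\dz_k=0$, while the fifth block row gives $\dx_l+\dz_l=1$. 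Proposition~\ref{prop-dotproduct}, applied with free index $l$ and basic/nonbasic sets $\setB\cup\{k\}$ and $\setN$ (for which $\dx\N=0$ and $\dz_{\setB\cup\{k\}}=0$), then gives $\dx_l\dz_l=\dx\T H\dx+\dy\T M\dy\ge 0$, so in particular $\dx_l\ge 0$. The first step is to rule out $\dx_l=0$: if $\dx_l=0$ then $\dx_l\dz_l=0$ forces $\dx\T H\dx+\dy\T M\dy=0$, hence (by semidefiniteness of $H$ and $M$) $H\dx=0$ and $M\dy=0$, hence also $A\dx=-M\dy=0$; a direct substitution then shows that $(\dx_l,\dx_k,\dx\B,0,0)$ is a null vector of $\widehat K$, and it is nonzero because $\dx_k\ne 0$, a contradiction. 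So $\dx_l\ne 0$, indeed $\dx_l>0$.

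Next I would extract two full-row-rank facts about the constraint block. If $u$ satisfies $u\T a_l=u\T a_k=0$, $u\T A\B=0$ and $Mu=0$, then $(0,0,0,-u,0)$ is a null vector of $\widehat K$, so $u=0$; hence $\tmattt{a_l}{a_k}{A\B}{-M}$ has full row rank. The fourth block row of (\ref{eqn-primaldir}) is precisely $a_l\dx_l+a_k\dx_k+A\B\dx\B+M\dy=0$, so Proposition~\ref{propA-AMfullrowrank} applies directly: using $\dx_k\ne 0$ it gives that $\tmatt{a_l}{A\B}{-M}$ has full row rank, and using it with the roles of $k$ and $l$ interchanged — legitimate because $\dx_l\ne 0$ by the first step — it gives that $\tmatt{a_k}{A\B}{-M}$ has full row rank.

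Finally I would finish $K_l$ and $K_k$ by the same mechanism. Suppose $K_l$ is singular; by Proposition~\ref{propA-nonsing} there are $u,v$, not both zero, with $H_{\{l\}\cup\setB}u=0$, $A_{\{l\}\cup\setB}u=0$, $A_{\{l\}\cup\setB}\T v=0$ and $Mv=0$. A nonzero $v$ is a left null vector of $\tmatt{a_l}{A\B}{-M}$, contradicting the rank fact above, so $v=0$ and $u\ne 0$. Let $\tilde u$ be $u$ padded by a zero in the $k$-slot, viewed as a vector over $P$. Since the principal submatrix $H_{\{l,k\}\cup\setB}$ of $H$ is positive semidefinite, Proposition~\ref{propA-diag} applied to $H_{\{l\}\cup\setB}u=0$ yields the ``missing'' equation $h_{kl}u_l+h\Bk\T u\B=0$, and with $H_{\{l\}\cup\setB}u=0$ this gives $H_{\{l,k\}\cup\setB}\tilde u=0$; moreover $A_{\{l,k\}\cup\setB}\tilde u=A_{\{l\}\cup\setB}u=0$. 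Substituting $(\tilde u,0,0)$ into $\widehat K$ then produces $u_l$ times the right-hand side of (\ref{eqn-primaldir}), so nonsingularity forces $(\tilde u,0,0)=u_l\,d$; if $u_l=0$ this forces $u=0$, impossible, and if $u_l\ne 0$ then $\dx_k=u_l^{-1}\tilde u_k=0$, contradicting $\dx_k\ne 0$. Hence $K_l$ is nonsingular. The argument for $K_k$ is identical, except that now $\tilde u$ is $u$ padded by a zero in the $l$-slot, so that the fifth block row makes $\widehat K(\tilde u,0,0)$ vanish outright, exhibiting $(\tilde u,0,0)$ as a nonzero null vector of $\widehat K$ directly.

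The only step I expect to require a genuine idea rather than bookkeeping is the appeal to Proposition~\ref{propA-diag}: it is what allows a null vector of the smaller Hessian block $H_{\{l\}\cup\setB}$ (respectively $H_{\{k\}\cup\setB}$) to be lifted to a null vector of $H_{\{l,k\}\cup\setB}$ without any control over the $k$-row (respectively $l$-row) of $H$. The remaining work is routine: verifying the claimed null vectors of $\widehat K$ while keeping the block ordering and signs of (\ref{eqn-primaldir}) straight.
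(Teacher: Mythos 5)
Your proof is correct, and it takes a genuinely different route through the key technical step. The paper argues that, because (\ref{eqn-primaldir}) has a unique solution with $\dx_k\ne 0$, the overdetermined system obtained by appending the equation $\dx_k=0$ is insolvable; the Fredholm alternative then yields an auxiliary homogeneous solution $(\dxtilde,\dytilde,\dztilde)$ of (\ref{eqn-baseprimal-alt}) with $\dxtilde_l+\dztilde_l=0$ and $\dztilde_l\ne0$, and the inequality $\dxtilde_l\dztilde_l+\dxtilde_k\dztilde_k\ge0$ forces all of $\dxtilde_l$, $\dxtilde_k$, $\dztilde_l$, $\dztilde_k$ to be nonzero. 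These certificates are fed into Propositions~\ref{propA-AMfullrowrank} and \ref{propA-HATfullrowrank} to establish full row rank of the two blocks of $K_l$, after which Proposition~\ref{propA-nonsing} gives nonsingularity, and the symmetry of the certificate in $k$ and $l$ handles $K_k$. You dispense with both the Fredholm certificate and Proposition~\ref{propA-HATfullrowrank}: you obtain $\dx_l>0$ directly from the sign identity together with the normalization $\dx_l+\dz_l=1$ (ruling out $\dx_l=0$ by exhibiting the explicit null vector $(0,\dx_k,\dx\B,0,0)$), you invoke Proposition~\ref{propA-AMfullrowrank} with the primal solution itself rather than with a certificate, and you replace the Hessian-block rank argument by lifting a putative null vector of $K_l$ or $K_k$ through Proposition~\ref{propA-diag} into a vector that the nonsingular matrix of (\ref{eqn-primaldir}) maps to a multiple of its right-hand side, contradicting $u\ne0$ or $\dx_k\ne0$. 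Your version is more self-contained and arguably cleaner, since every contradiction is an explicit null vector of the single nonsingular matrix supplied by the hypothesis; the paper's certificate argument buys reusability, since the same construction and Proposition~\ref{propA-HATfullrowrank} carry over essentially verbatim to the dual counterpart, Proposition~\ref{propA-dualnonsing}.
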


\begin{proof}
By assumption, the equations (\ref{eqn-primaldir}) have a unique solution
with $\dx_k\ne 0$.  This implies that there is no solution of the
overdetermined equations
\begin{equation}\label{eqn-baseprimal}
   \pmat{ h\lld  & h\kld & h\Bl^T & a_l^T &\m 1 \\[1pt]
          h\kld  & h\kkd & h\Bk^T & a_k^T &     \\[1pt]
          h\Bld  & h\Bkd & H\BBd  & A\B^T &     \\[1pt]
          a_l    & a_k   & A\B    & -M    &     \\[1pt]
          1      &       &        &       & -1  \\[1pt]
                 & 1     &        &       &       }
   \pmat{\m\dx_l \\[1pt]
         \m\dx_k \\[1pt]
         \m\dx\B \\[1pt]
         - \dy   \\[1pt]
         - \dz_l }
 = \pmat{ 0 \\[1pt]
          0 \\[1pt]
          0 \\[1pt]
          0 \\[1pt]
          1 \\[1pt]
          0   }.
\end{equation}
Given an arbitrary matrix $D$ and nonzero vector $f$, the fundamental
theorem of linear algebra implies that if $D w = f$ has no solution, then
there exists a vector $v$ such that $v\T f \ne 0$.  The application of this
result to (\ref{eqn-baseprimal}) implies the existence of a nontrivial
vector $(\dxtilde_l$, $\dxtilde_k$, $\dxtilde\B$, $-\dytilde$,
$-\dztilde_l$, $-\dztilde_k)$ such that
\begin{equation}\label{eqn-baseprimal-alt}
  \pmat{ h\lld & h\kld & h\Bl^T & a_l^T & \m 1  &              \\[1pt]
         h\kld & h\kkd & h\Bk^T & a_k^T &       &\mbc{a_l^T}{1}\\[1pt]
         h\Bld & h\Bkd & H\BBd  & A\B^T &       &              \\[1pt]
         a_l   &  a_k  & A\B    &  -M   &       &              \\[1pt]
         1     &       &        &       &   -1  &     }
  \pmat{ \m\dxtilde_l \\
         \m\dxtilde_k \\
         \m\dxtilde\B \\
         - \dytilde   \\
         - \dztilde_l \\
         - \dztilde_k   }
= \pmat{ 0 \\[1pt]
         0 \\[1pt]
         0 \\[1pt]
         0 \\[1pt]
         0   },
\end{equation}
with $\dztilde_l\ne 0$. The last equation of (\ref{eqn-baseprimal-alt})
gives $\dxtilde_l + \dztilde_l=0$, in which case $\dxtilde_l \dztilde_l =
-\dztilde_l^2 <0$ because $\dztilde_l\ne0$. Any solution
of (\ref{eqn-baseprimal-alt}) may be viewed as a solution of the equations
$H \dxtilde - A\T \dytilde - \dztilde = 0$, $A \dxtilde +  M \dytilde = 0$,
$\dztilde\B=0$, and $\dxtilde_i = 0$ for $i\in \{ 1$, $2$, \dots, $n\}
\setminus \{ l \} \setminus \{ k \}$. An argument similar to that used to
establish Proposition~\ref{prop-dotproduct} gives
\[
  \dxtilde_l \dztilde_l + \dxtilde_k \dztilde_k \ge 0,
\]
which implies that $\dxtilde_k \dztilde_k > 0$, with $\dxtilde_k\ne0$ and
$\dztilde_k\ne 0$.

As the search direction is unique, it follows from (\ref{eqn-primaldir})
that $\tmattt{h\Bld }{ H\Bkd }{ H\BBd }{ A\B^T }$ has full row rank, and
Proposition~\ref{propA-diag} implies that $\tmat{H\BBd }{ A\B^T }$ has full
row rank. Hence, as $\dztilde_l\ne0$, it follows from
(\ref{eqn-baseprimal-alt}) and Proposition~\ref{propA-HATfullrowrank} that
the matrix
\[
  \pmat{  h\lld & h\kld & h\Bl^T & a_l^T \\[1pt]
          h\Bld & h\Bkd & H\BBd  & A\B^T }
\]
has full row rank, which is equivalent to the matrix
\[
  \pmat{  h\lld & h\Bl^T & a_l^T \\[1pt]
          h\Bld & H\BBd  & A\B^T }
\]
having full row rank by Proposition~\ref{propA-diag},

Again, the search direction is unique and (\ref{eqn-primaldir}) implies
that $\tmattt{ a_l }{ a_k }{ A\B }{ -M }$ has full row rank. As
$\dxtilde_k\ne 0$, Proposition~\ref{propA-AMfullrowrank} implies that
$\tmatt{ a_l }{ A\B }{ -M }$ must have full row rank. Consequently,
Proposition~\ref{propA-nonsing} implies that $K_l$ is nonsingular.

As $\dxtilde_k$, $\dxtilde_l$, $\dztilde_k$ and $\dztilde_l$ are all
nonzero, the roles of $k$ and $l$ may be reversed to give the result that
$K_k$ is nonsingular.
\end{proof}

The next proposition concerns the situation when a constraint index $k$
is moved from $\setN$ to $\setB$ in a dual subiteration. In particular, it
is shown that the resulting matrix $K\B$ defined after a subiteration is
nonsingular.

\begin{proposition}\label{propA-dualnonsing}
Assume that there is a unique solution of the equations
\begin{equation}\label{eqn-dualdir}
  \pmat{ h\lld & h\kld & h\Bl^T & a_l^T & \m 1        \\[1pt]
         h\kld & h\kkd & h\Bk^T & a_k^T &      & \m 1 \\[1pt]
         h\Bld & h\Bkd & H\BBd  & A\B^T &             \\[1pt]
         a_l   &  a_k  & A\B    &  -M   &             \\[1pt]
         1     &       &        &       &   -1        \\[1pt]
               &  1                                     }
  \pmat{ \m\dx_l \\
         \m\dx_k \\
         \m\dx\B \\
          -\dy   \\
          -\dz_l \\
          -\dz_k }
= \pmat{ 0 \\[1pt]
         0 \\[1pt]
         0 \\[1pt]
         0 \\[1pt]
         1 \\[1pt]
         0   },
\end{equation}
with $\dz_k\ne 0$. Then, the matrices $K_l$ and $K_k$ are nonsingular,
where
\[
 K_l = \pmat{ h\lld  & h\Bl^T & a_l^T \\[1pt]
              h\Bld  & H\BBd  & A\B^T \\[1pt]
              a_l    & A\B    &  -M     },  \wordss{and}
 K_k = \pmat{ h\kkd  & h\Bk^T & a_k^T \\[1pt]
              h\Bkd  & H\BBd  & A\B^T \\[1pt]
              a_k    & A\B    &  -M     }.
\]
\end{proposition}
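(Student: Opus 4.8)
The plan is to follow the template of the proof of Proposition~\ref{propA-primalnonsing}, interchanging throughout the roles of $\dx_k$ and $\dz_k$ and, correspondingly, the roles of the two block-row-rank conditions — the one on $\tmat{H\BBd}{A\B^T}$ (arising from the $\tmat{H}{A^T}$ block) and the one on $\tmatt{a_l}{A\B}{-M}$ (arising from the $\tmat{A}{-M}$ block). I would first observe that the system (\ref{eqn-dualdir}) is exactly the system (\ref{eqn-primaldir}) augmented by the single equation $\dx_k=0$ and by the single additional unknown $\dz_k$, so it is again square, and by hypothesis it is uniquely solvable with $\dz_k\ne 0$.

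Next I would argue that appending the equation $\dz_k=0$ to (\ref{eqn-dualdir}) yields an overdetermined system with no solution, since the unique solution of (\ref{eqn-dualdir}) has $\dz_k\ne 0$. As in the primal proof, the fundamental theorem of linear algebra produces a nonzero vector $(\dxtilde_l,\dxtilde_k,\dxtilde\B,-\dytilde,-\dztilde_l,-\dztilde_k)$ annihilating the transpose of that augmented matrix; because the $\tmat{H}{A^T}$ and $\tmat{A}{-M}$ blocks are symmetric, the resulting identities again take the form of homogeneous KKT equations, namely $H\dxtilde - A\T\dytilde - \dztilde = 0$, $A\dxtilde + M\dytilde = 0$, $\dztilde\B=0$, $\dxtilde_i=0$ for $i\in\setN$, together with the extra relation $\dxtilde_l+\dztilde_l=0$; moreover the coordinate supplied by the fundamental theorem sits in the normalization row (the one with nonzero right-hand side), which forces $\dztilde_l\ne 0$, hence $\dxtilde_l=-\dztilde_l\ne 0$ and $\dxtilde_l\dztilde_l=-\dztilde_l^2<0$. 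Applying the computation in the proof of Proposition~\ref{prop-dotproduct} to $(\dxtilde,\dytilde,\dztilde)$ gives $\dxtilde_l\dztilde_l+\dxtilde_k\dztilde_k = \dxtilde\T H\dxtilde + \dytilde\T M\dytilde \ge 0$, so $\dxtilde_k\dztilde_k>0$ and both $\dxtilde_k$ and $\dztilde_k$ are nonzero.

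The remaining rank bookkeeping is then identical to the primal case. Uniqueness of (\ref{eqn-dualdir}) forces $\tmattt{h\Bld}{h\Bkd}{H\BBd}{A\B^T}$ and $\tmattt{a_l}{a_k}{A\B}{-M}$ to have full row rank, and Proposition~\ref{propA-diag} reduces the first to full row rank of $\tmat{H\BBd}{A\B^T}$; since $\dztilde_l\ne 0$, Proposition~\ref{propA-HATfullrowrank} (with the special index there taken to be $l$) gives that $\tmat{H}{A^T}$ restricted to the rows indexed by $\{l\}\cup\setB$ has full row rank; since $\dxtilde_k\ne 0$, Proposition~\ref{propA-AMfullrowrank} gives that $\tmatt{a_l}{A\B}{-M}$ has full row rank; and Proposition~\ref{propA-nonsing} then yields the nonsingularity of $K_l$. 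Finally, since $\dxtilde_l$, $\dxtilde_k$, $\dztilde_l$, $\dztilde_k$ are all nonzero, the roles of $k$ and $l$ may be interchanged throughout, giving the nonsingularity of $K_k$ as well. The one step that needs care is producing the transposed system and checking that the nonzero coordinate from the fundamental theorem lands in the normalization row, so that $\dztilde_l\ne 0$; once that is secured, the dot-product identity of Proposition~\ref{prop-dotproduct} does the essential work of transferring nonvanishing from $\dz_k$ to $\dx_k$, and everything else is a routine invocation of Propositions~\ref{propA-nonsing}, \ref{propA-diag}, \ref{propA-AMfullrowrank}, and \ref{propA-HATfullrowrank}.
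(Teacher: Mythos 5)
Your proof is correct and follows the paper's argument essentially verbatim: the paper likewise passes to an unsolvable overdetermined system (it deletes the $\dz_k$ column rather than appending the equation $\dz_k=0$, an equivalent device), invokes the Fredholm alternative to obtain the auxiliary solution with $\dztilde_l\ne 0$, uses the normalization row together with Proposition~\ref{prop-dotproduct} to conclude $\dxtilde_k\dztilde_k>0$, and finishes with the same chain of rank propositions before swapping the roles of $k$ and $l$. No gaps.
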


\begin{proof}
As (\ref{eqn-dualdir}) has a unique solution with $\dz_k\ne 0$,
there is no solution of
\begin{equation}\label{eqn-basedual}
  \pmat{ h\lld & h\kld & h\Bl^T & a_l^T & \m 1 \\[1pt]
         h\kld & h\kkd & h\Bk^T & a_k^T &      \\[1pt]
         h\Bld & h\Bkd & H\BBd  & A\B^T &      \\[1pt]
         a_l   & a_k   & A\B    &  -M   &      \\[1pt]
         1     &       &        &       &   -1 \\[1pt]
               & 1                               }
  \pmat{ \m\dx_l \\
         \m\dx_k \\
         \m\dx\B \\
          -\dy   \\
          -\dz_l }
= \pmat{ 0 \\[1pt]
         0 \\[1pt]
         0 \\[1pt]
         0 \\[1pt]
         1 \\[1pt]
         0   }.
\end{equation}
The fundamental theorem of linear algebra applied to
(\ref{eqn-basedual}) implies the existence of a solution of
\begin{equation}\label{eqn-basedual-alt}
  \pmat{ h\lld & h\kld & h\Bl^T & a_l^T & \m1               \\[1pt]
         h\kld & h\kkd & h\Bk^T & a_k^T &    &\mbc{a_l^T}{1}\\[1pt]
         h\Bld & h\Bkd & H\BBd  & A\B^T &                   \\[1pt]
         a_l   & a_k   & A\B    &  -M   &                   \\[1pt]
         1     &       &        &       &  -1       }
  \pmat{ \m\dxtilde_l \\
         \m\dxtilde_k \\
         \m\dxtilde\B \\
          -\dytilde   \\
          -\dztilde_l \\
          -\dztilde_k  }
= \pmat{ 0 \\[1pt]
         0 \\[1pt]
         0 \\[1pt]
         0 \\[1pt]
         0   },
\end{equation}
with $\dztilde_l\ne0$. It follows from (\ref{eqn-basedual-alt}) that
$\dxtilde_l+\dztilde_l=0$. As $\dztilde_l\ne0$, this implies
$\dxtilde_l \dztilde_l<0$. The solution of (\ref{eqn-basedual-alt}) may be
regarded as a solution of the homogeneous equations $H \dx - A\T \dy - \dz
= 0$, $A \dx +  M \dy = 0$, with $\dz_i = 0$, for $i\in\setB$, and $\dx_i =
0$, for $i\in\{1,\dots,n\}\setminus\{k\}\setminus\{l\}$. Hence,
Proposition~\ref{prop-dotproduct} gives
\[
\dxtilde_l \dztilde_l + \dxtilde_k \dztilde_k \ge 0,
\]
so that $\dxtilde_k \dztilde_k > 0$. Hence, it must hold that
$\dxtilde_k\ne0$ and $\dztilde_k\ne 0$.

As $\dxtilde_k\ne0$, $\dxtilde_l\ne0$, $\dztilde_k\ne0$ and
$\dztilde_l\ne0$, the remainder of the proof is analogous to that of
Proposition~\ref{propA-primalnonsing}.
\end{proof}

The next result gives expressions for the primal and dual objective
functions in terms of the computed search directions.

\begin{proposition}\label{propA-dirs}
Assume that $(x,y,z)$ satisfies the primal and dual equality constraints
\[
 H x + c - A\T y - z = 0, \wordss{and}  Ax +   M y - b = 0.
\]
Consider the partition $\{1$, $2$, \dots, $n\}= \setB\cup\{l\}\cup\setN$
such that $x\N+q\N=0$ and $z\B+r\B=0$. If the components of the direction
$(\dx$, $\dy$, $\dz)$ satisfy {\rm(\ref{eqn-dxdydz})}, then the primal and
dual objective functions for {\rm$(\Primal_{q,r})$} and
{\rm$(\Dual_{q,r})$}, i.e.,
\begin{align*}
 f\P(x,y)   &= \m \half x\T H x + \half y\T  M y + c\T x + r\T x  \\
 f\D(x,y,z) &=  - \half x\T H x - \half y\T  M y + b\T y - q\T z,
\end{align*}
satisfy the identities
\begin{align*}
 f\P(x+\alpha\dx,y+\alpha\dy)
  &=  f\P(x,y)+ \dx_l(z_l+r_l) \alpha + \half \dx_l \dz_l \alpha^2, \\
 f\D(x+\alpha\dx,y+\alpha\dy,z+\alpha\dz)
  &=  f\D(x,y,z) - \dz_l(x_l+q_l) \alpha - \half \dx_l \dz_l \alpha^2.
\end{align*}
\end{proposition}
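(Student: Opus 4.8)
The plan is to expand each objective as a quadratic polynomial in $\alpha$ and then simplify the linear and quadratic coefficients separately. Writing $f\P(x+\alpha\dx,y+\alpha\dy) = f\P(x,y) + \alpha L\P + \half\alpha^2 Q$ and $f\D(x+\alpha\dx,y+\alpha\dy,z+\alpha\dz) = f\D(x,y,z) + \alpha L\D - \half\alpha^2 Q$, a direct expansion shows that in both cases the quadratic coefficient is, up to sign, $Q = \dx\T H \dx + \dy\T M \dy$. Since $(\dx,\dy,\dz)$ satisfies the identities (\ref{eqn-dxdydz}) with $\dx\N = 0$ and $\dz\B = 0$, Proposition~\ref{prop-dotproduct} gives $Q = \dx_l\dz_l$, which already accounts for the $\alpha^2$ terms in both stated identities.

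For the primal linear coefficient $L\P = \dx\T H x + \dy\T M y + (c+r)\T\dx$, the plan is to use the primal equality constraint $Hx + c - A\T y - z = 0$ to replace $Hx$ by $A\T y + z - c$, which gives $L\P = (A\dx + M\dy)\T y + (z+r)\T\dx$. The first term vanishes by the second identity of (\ref{eqn-dxdydz}), and since $\dx\N = 0$ and $z\B + r\B = 0$, only the $l$-component of $(z+r)\T\dx$ survives, so $L\P = (z_l+r_l)\dx_l$, as required.

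For the dual linear coefficient $L\D = -\dx\T H x - \dy\T M y + b\T\dy - q\T\dz$, a different substitution is used: first use the primal equality constraint $Ax + M y - b = 0$ to write $b\T\dy = (Ax)\T\dy + y\T M\dy$, so that $-\dy\T M y + b\T\dy = x\T A\T\dy$ and hence $L\D = -x\T(H\dx - A\T\dy) - q\T\dz$. The first identity of (\ref{eqn-dxdydz}) gives $H\dx - A\T\dy = \dz$, so $L\D = -(x+q)\T\dz$, and since $\dz\B = 0$ and $x\N + q\N = 0$ this collapses to $L\D = -(x_l+q_l)\dz_l$, completing the two expansions.

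The computation is entirely routine; the only point requiring care is the choice of which equality constraint to substitute into each linear coefficient — eliminating $Hx$ via the dual equation for $f\P$, but eliminating $b$ via the primal equation and then using $\dz = H\dx - A\T\dy$ for $f\D$ — so that the cross terms cancel and the remaining inner products telescope onto the single index $l$. I do not anticipate any genuine obstacle.
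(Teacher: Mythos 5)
Your proposal is correct and follows essentially the same route as the paper: the quadratic coefficient is identified with $\dx\T H\dx + \dy\T M\dy = \dx_l\dz_l$ via Proposition~\ref{prop-dotproduct}, and the linear coefficients are collapsed to the single index $l$ using the same substitutions (the stationarity identity for $f\P$; the primal equality constraint together with $H\dx - A\T\dy = \dz$ for $f\D$, applied in a trivially different order than in the paper). No gaps.
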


\begin{proof}
The directional derivative of the primal objective function is given by
\begin{subequations}\label{eqn-Pdirder}
\begin{align}
 \pmat{ \dx^T & \dy^T } \Grad f\P(x,y)
     &= \pmat{ \dx^T & \dy^T }
        \pmat{  Hx + c + r \\
                  M y        }            \notag              \\
     &= \pmat{ \dx^T & \dy^T }
        \pmat{ A\T y + z + r \\
                  M y          }          \label{eqn-Pdirderb}\\
     &= (A\dx+  M\dy)\T y + \dx\T ( z + r)
      =  \dx_l ( z_l + r_l ),             \label{eqn-Pdirderd}
\end{align}
\end{subequations}
where the identity $Hx+c = A\T y +z$ has been used in (\ref{eqn-Pdirderb})
and the identities $A\dx + M\dy=0$, $\dx_N=0$ and $z\B+r\B=0$ have
been used in (\ref{eqn-Pdirderd}).

The curvature in the direction $(\dx,\dy)$ is given by
\begin{equation}\label{eqn-Pdircurv}
  \pmat{ \dx^T & \dy^T } \Hess f\P(x,y) \pmat{ \dx \\ \dy }
= \pmat{ \dx^T & \dy^T }
    \pmat{   H &   \\
               & M   }
    \pmat{ \dx \\ \dy }
=  \dx_l \dz_l,
\end{equation}
where the last equality follows from Proposition~\ref{prop-dotproduct}.

The directional derivative  of the dual objective function is given by
\begin{subequations}\label{eqn-Ddirder}
\begin{align}
  \pmat{ \dx^T & \dy^T & \dz^T } \Grad f\D(x,y,z)
  &= \pmat{ \dx^T & \dy^T & \dz^T }
    \pmat{  -Hx     \\
            -My + b \\
            -q }    \\
 &= - \dx\T H x + \dy\T (-My + b) - \dz^T q \\
 &= - (A\T \dy + \dz)\T x + \dy\T (-My + b)
  - \dz^T q             \label{eqn-Ddirderc} \\
 &= -\dy\T (Ax +  My - b)- \dz\T (x+q) \\
 &= - \dz_l (x_l+q_l),  \label{eqn-Ddirdere}
\end{align}
\end{subequations}
where the identity $H \dx-A\T \dy -\dz=0$ has been used in (\ref{eqn-Ddirderc})
and the identities $Ax + My - b = 0$, $x\N+q\N=0$ and $\dz\B=0$ have
been used in (\ref{eqn-Ddirdere}).

As $z$ only appears linearly in the dual objective function, it follows
from the structure of the Hessian matrices of $f\P(x,y)$ and $f\D(x,y,z)$
in combination with (\ref{eqn-Pdircurv}) that
\begin{align*}
 \pmat{ \dx^T & \dy^T & \dz^T} \Hess f\D(x,y,z)
   \pmat{ \dx \\ \dy \\ \dz }
 &= -\pmat{ \dx^T & \dy^T}\Hess f\P(x,y) \pmat{ \dx \\ \dy } \\
 &= -\dx_l \dz_l.
\end{align*}
\end{proof}

The final result shows that there is no loss of generality in assuming that
$\tmat{ A }{ M }$ has full row rank in $(\Primal_{q,r})$.

\begin{proposition}\label{prop-AMfullrowrank}
There is no loss of generality in assuming that $\tmat{ A }{ M }$ has full row
rank in {\rm$(\Primal_{q,r})$}.
\end{proposition}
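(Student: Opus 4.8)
The plan is to show that any instance of $(\Primal_{q,r})$ is equivalent to one in which $\tmat{A}{M}$ has full row rank, via a rank‑revealing congruence applied simultaneously to the $y$-variables and to the equality constraints. First I would observe that $\tmat{A}{M}$ fails to have full row rank exactly when there is a nonzero $w\in\Re^m$ with $A\T w=0$ and, by symmetry of $M$, $Mw=0$; that is, when the left null space of $\tmat{A}{M}$ is nontrivial, of dimension $m-\bar m$ where $\bar m=\operatorname{rank}\tmat{A}{M}$. Pick a nonsingular $T\in\Re^{m\times m}$ whose last $m-\bar m$ rows form a basis for this left null space, and write $T\T=\tmat{T_1}{T_2}$ with $T_2$ having $m-\bar m$ columns; then $T_2\T A=0$ and $T_2\T M=0$, hence also $MT_2=0$. (When $M=0$ this is just the classical reduction of a standard-form QP to one with $A$ of full row rank.)

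Next I would make the substitution $y=T\T\ytilde$, with $\ytilde=(\ytilde_1,\ytilde_2)$ partitioned conformally, and rewrite $Ax+My=b$ in the equivalent form $TAx+TMT\T\ytilde=Tb$. Using $MT_2=0$ and $T_2\T M=0$, a direct computation gives $TA=\pmat{T_1\T A\\0}$ and $TMT\T=\pmat{T_1\T MT_1 & 0\\0&0}$, while the quadratic term becomes $\half y\T My=\half\ytilde\T(TMT\T)\ytilde=\half\ytilde_1\T(T_1\T MT_1)\ytilde_1$. Thus $\ytilde_2$ occurs neither in the objective nor in the constraints, and the constraint system splits as $\hat Ax+\hat M\ytilde_1=\hat b_1$ together with $0=\hat b_2$, where $\hat A=T_1\T A$, $\hat M=T_1\T MT_1$, and $(\hat b_1,\hat b_2)=Tb$. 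If $\hat b_2\ne0$ then $(\Primal_{q,r})$ is infeasible and there is nothing further to do; otherwise the trivial rows $0=\hat b_2$ and the vacuous variable $\ytilde_2$ may be deleted, leaving
\[
\minimize{x,\ytilde_1}\ \half x\T H x+\half\ytilde_1\T\hat M\ytilde_1+(c+r)\T x\quad\subject\ \hat Ax+\hat M\ytilde_1=\hat b_1,\ \ x\ge -q,
\]
a problem of exactly the same form as $(\Primal_{q,r})$, with $H$ and $\hat M=T_1\T MT_1$ symmetric positive semidefinite, the same feasible $x$-set, and the same optimal value (the map $(x,y)\mapsto(x,\ytilde_1)$ with $\ytilde=T\Tinv y$ being a bijection onto feasible points modulo the irrelevant $\ytilde_2$).

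It then remains only to verify that the reduced matrix $\tmat{\hat A}{\hat M}$ has full row rank, and this is the step that needs a small argument: since $T$ and the block-diagonal matrix $\pmat{I_n&0\\0&T\T}$ are nonsingular, $\pmat{\hat A&\hat M&0\\0&0&0}=T\tmat{A}{M}\pmat{I_n&0\\0&T\T}$ has rank $\bar m$, so its nonzero block $\tmat{\hat A}{\hat M}\in\Re^{\bar m\times(n+\bar m)}$ has rank $\bar m$, i.e., full row rank. I expect the only real obstacle in the write-up to be the bookkeeping behind the block structure of $TMT\T$ — getting from $MT_2=0$ to the vanishing of its second row and column — together with a careful statement of the equivalence; everything else is immediate.
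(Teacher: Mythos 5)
Your proof is correct, and it reaches the same destination as the paper's by a mildly different technical route. The underlying idea is identical in both arguments: the left null space of $\tmat{A}{M}$ can be collapsed out of the $y$-variables because the symmetry of $M$ forces the column dependence of $M$ to mirror its row dependence. The paper realizes this with a row partition in which the dependent block equals $N$ times the independent block, deduces $M_{12}=M_{11}N\T$ and $A_2\T=A_1\T N\T$, and substitutes $\ytilde_1=y_1+N\T y_2$ --- which is exactly your change of variables $y=T\T\ytilde$ for the particular elimination matrix $T=\left(\begin{smallmatrix}I&0\\-N&I\end{smallmatrix}\right)$. Where you differ is in execution and in what is verified: the paper works at the level of the stationarity and feasibility equations (\ref{eqn-optgradLzero})--(\ref{eqn-optfeasprimallin}), showing that a triple satisfies them if and only if it satisfies the reduced system, and it obtains full row rank of the reduced block $\tmat{A_1}{M_{11}}$ by invoking Proposition~\ref{propA-diag}; you instead prove equivalence of the optimization problems directly (matching objective values and feasible sets), treat the possible inconsistency $\hat b_2\ne 0$ explicitly as infeasibility of $(\Primal_{q,r})$, and get full row rank of $\tmat{\hat A}{\hat M}$ from a clean rank count of $T\tmat{A}{M}\left(\begin{smallmatrix}I&0\\0&T\T\end{smallmatrix}\right)$ that needs no appeal to positive semidefiniteness beyond symmetry. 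Your version is slightly more self-contained and makes the ``equivalent problem'' claim more explicit; the paper's version is shorter because it leans on Proposition~\ref{propA-diag}, which is needed elsewhere anyway, and because arguing at the level of the optimality conditions is all that is required for how the assumption is used in the rest of the paper. One cosmetic point: if you want the reduction to be usable verbatim for the dual problem and the KKT systems as well, it is worth adding the one-line observation that $A\T y=\hat A\T\ytilde_1$ under your substitution, so the stationarity equation transforms consistently; this is immediate from $T_2\T A=0$ but deserves a sentence.
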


\begin{proof}
  Let $(x$, $y$, $z)$ be any vector satisfying (\ref{eqn-optgradLzero}) and
  (\ref{eqn-optfeasprimallin}).  Assume that $\tmat{ A }{ M }$ has linearly
  dependent rows, and that $\tmat{ A }{ M }$ and $b$ may be partitioned
  conformally such that
\[
 \mat{A}{M}
  = \pmat{A_1      &  M_{11}   &  M_{12}    \\[1pt]
          A_2\drop &  M_{12}^T &  M_{22}\drop }, \wordss{and}
 b
  = \pmat{ b_1 \\ b_2 },
\]
with $\tmatt{ A_{1} }{ M_{11} }{  M_{12} }$ having full row rank, and
\begin{equation}\label{eqn-AMpart}
 \pmat{ A_2\drop &  M_{12}^T &  M_{22}\drop }
  = N \pmat{ A_1 &  M_{11} &  M_{12}},
\end{equation}
with $A_1\in\Re^{m_1\times n}$ and $A_2\in\Re^{m_2\times n}$ for some
matrix $N\in\Re^{m_2\times m_1}$. From the  linear dependence of the
rows of $\tmat{ A }{ M }$, it follows that $x$, $y$ and $z$ satisfy
(\ref{eqn-optgradLzero}) and (\ref{eqn-optfeasprimallin}) if and only
if
\begin{align*}
 Hx+c - A_1^T y_1\drop - A_2^T y_2\drop  - z  &= 0, \\
 A_1 x +  M_{11} y_1   +  M_{12} y_2     - b_1&= 0 \wordss{and}  b_2 = N b_1.
\end{align*}
It follows from (\ref{eqn-AMpart}) that $M_{12} =  M_{11} N^T$ and
$A_2^T=A_1^T N^T$, so that $x$, $y$ and $z$ satisfy
(\ref{eqn-optgradLzero}) and (\ref{eqn-optfeasprimallin}) if and only if
\begin{align*}
    Hx + c - A_1^T( y_1\drop + N\T y_2\drop) - z   &= 0, \\
 A_1 x +         M_{11}( y_1 + N\T y_2)      - b_1 &= 0 \words{and} b_2 = N b_1.
\end{align*}
We may now define $\ytilde_1=y_1+N\T y_2$ and replace
(\ref{eqn-optfeasprimallin}) and (\ref{eqn-optgradLzero}) by the system
\begin{align*}
  H x + c - A_1^T\ytilde_1\drop - z   &= 0, \\
A_1 x +    M_{11}\ytilde_1      - b_1 &= 0.
\end{align*}
By assumption, $\tmatt{A_{1} }{ M_{11} }{ M_{12} }$ has full row
rank. Proposition~\ref{propA-diag} implies that $\tmat{A_{1} }{ M_{11} }$ has
full row rank. This gives an equivalent problem for which
$\tmat{ A_{1} }{ M_{11} }$ has full row rank.
\end{proof}

\end{document}